\numberwithin{equation}{section}
\newtheorem{theorem}{Theorem}[section]
\newtheorem{corollary}{Corollary}[theorem]
\newtheorem{lemma}{Lemma}[section]
\newcommand{\thickhline}{%
    \noalign {\ifnum 0=`}\fi \hrule height 1pt
    \futurelet \reserved@a \@xhline
}
\newcolumntype{"}{@{\hskip\tabcolsep\vrule width 1pt\hskip\tabcolsep}}
\newcommand{\doublespacing}{\let\CS=\@currsize
 \renewcommand{\baselinestretch}{1.05}\tiny\CS}
\begin{document}
\newcommand{\bea}{\begin{eqnarray}}
\newcommand{\eea}{\end{eqnarray}}
\newcommand{\nn}{\nonumber}
\newcommand{\bee}{\begin{eqnarray*}}
\newcommand{\eee}{\end{eqnarray*}}
\newcommand{\lb}{\label}
\newcommand{\nii}{\noindent}
\newcommand{\ii}{\indent}
\newtheorem{thm}{Theorem}[section]
\newtheorem{lem}{Lemma}[section]
\newtheorem{rem}{Remark}[section]
\theoremstyle{remark}
\renewcommand{\theequation}{\thesection.\arabic{equation}}
\vspace{5cm}
\title{\bf Shrinkage Estimators Dominating Some Naive Estimators of the Selected Entropy}
\author{ Masihuddin$^{a}$
~and Neeraj Misra$^{b}$}
\date{}
\maketitle \noindent {\it $^{a,b}$ Department of Mathematics \& Statistics, Indian
Institute of Technology Kanpur, Kanpur-208016, Uttar Pradesh, India} 

\newcommand{\oddhead}{Estimation of the Selected Entropy}
\renewcommand{\@oddhead}
{\hspace*{-3pt}\raisebox{-3pt}[\headheight][0pt]
{\vbox{\hbox to \textwidth
{\hfill\oddhead}\vskip8pt}}}
\vspace*{0.05in}
\noindent {\bf Abstract}:     
  Consider two populations characterized by independent random variables $X_1$ and $X_2$ such that $X_i, i=1,2,$ follows a gamma distribution with an unknown scale parameter $\theta_i>0$, and known shape parameter $\alpha >0$ (the same shape parameter for both the populations). Here $(X_1,X_2)$ may be an appropriate minimal sufficient statistic based on independent random samples from the two populations. The population associated with the larger (smaller) Shannon entropy is referred to as the ``worse" (``better") population. For the goal of selecting the worse (better) population, a natural selection rule is the one that selects the population corresponding to $\max\{X_1,X_2\} ~(\min\{X_1,X_2\})$ as the worse (better) population. This natural selection rule is known to possess several optimum properties. We consider the problem of estimating the Shannon entropy of the population selected using the natural selection rule (to be referred to as the selected entropy) under the mean squared error criterion.  In order to improve upon various naive estimators of the selected entropy, we derive a class of shrinkage estimators that shrink various naive estimators towards the central entropy. For this purpose, we first consider a class of naive estimators comprising linear, scale and permutation equivariant estimators and identify optimum estimators within this class. The class of naive estimators considered by  us contains three natural plug-in estimators. To further improve upon the optimum naive estimators, we consider a general class of equivariant estimators and obtain dominating shrinkage estimators. We also present a simulation study on the performances of various competing estimators.  A real data analysis is also reported to illustrate the applicability of proposed estimators.  \vspace{2mm}\\
\noindent {\it AMS 2010 SUBJECT CLASSIFICATIONS:} 62F07 · 62F10 · 62C20 ·94A17\\ 

 \noindent {\it Keywords:}~ Admissible estimator; generalized Bayes estimator; inadmissible estimator; linear, scale and permutation equivariant estimators; mean squared error; naive estimators; natural selection rule; plug-in estimators; selected better entropy; selected worse entropy; Shannon's entropy; shrinkage estimator.
\newcommand\blfootnote[1]{%
	\begingroup
	\renewcommand\thefootnote{}\footnote{#1}%
	\addtocounter{footnote}{-1}%
	\endgroup
}\blfootnote{Corresponding author(s): masih.iitk@gmail.com}
\section{Introduction}
In 1948, Claude Shannon in his seminal paper (\cite{shannon1948l}) introduced the notion of entropy which paved the way for a separate field of research, called Information Theory. The notion of entropy has been used in various fields of science, engineering, economics and medicine to describe uncertainty. In the field of Statistics, entropy measures the uncertainty associated with a random variable (rv). Let $X_1$ and $X_2$ be two rvs with the Lebesgue probability density functions (pdfs) $f_{\theta_1}(\cdot)$ and $f_{\theta_2}(\cdot)$, respectively, where $\theta_i \in \Omega \subset \mathbb{R}$, $i=1,2;$ here $\mathbb{R}$ denotes the real line and $\Omega$ denotes the common parameter space of $\theta_i$s. The Shannon entropy of $X_i$  (or of the associated probability distribution) is defined by: 
\begin{equation*}
H(\theta_i)=\mathbb{E}_{\theta_i}\left(-\ln f_{\theta_i}(X_i) \right),~ \theta_i \in \Omega,~i=1,2.
\end{equation*}
For two populations $\Pi_1$ and $\Pi_2$, characterized by random variables $X_1$ and $X_2$, the one corresponding to the larger (smaller) entropy $\max\{H(\theta_1), H(\theta_2)\}$ $\left(\min\{H(\theta_1),H(\theta_2)\}\right)$ is considered to be more (less) volatile (or chaotic) than the other. For example, if $\Pi_1$ and $\Pi_2$ are populations of daily stock prices of two stocks, then the stock corresponding to $\max\{H(\theta_1),H(\theta_2)\}$ $\left(\min\{H(\theta_1),H(\theta_2)\}\right)$ is considered to be more riskier (safer) than the other. We call the population corresponding to riskier (safer) stock the ``worse" (``better") population.
In such a situation, it may be of interest to select the worse (better) population and also to have an estimate of the entropy (volatility index) of the selected population. Since $H(\theta_i)s$ are unknown, they may be estimated by appropriate estimators $\widehat{H_1}(\underline{X})$ and $\widehat{H_2}(\underline{X})$  based on independent random samples from the two populations, and the population corresponding to  $\max\{\widehat{H_1}(\underline{X}) ,\widehat{H_2}(\underline{X})\}$ $\left(\min\{\widehat{H_1}(\underline{X}) ,\widehat{H_2}(\underline{X})\}\right)$ may be identified as the worse (better) population. We call such a selection rule the ``natural selection rule". Under the assumption that the underlying pdfs $\{f_{\theta}:\theta \in \Omega\}$ have the monotone likelihood ratio property, the natural selection rule is known to posses several optimum properties. After the target population (riskier or safer stock) has been selected, it may be desired to have an estimate of the volatility index (Shannon entropy) of the selected population. In the statistical literature, such estimation problems are known as ``estimation after selection problems". Because of their applications in various fields of science, engineering, economics and medicine these estimation problems have attracted the attention of many eminent researchers and there exists a vast literature on these problems. Some of the key contributions are due to \cite{sarkadi1967estimation}, \cite{165}, \cite{hsieh1981estimating}, \cite{cohen1982estimating}, \cite{sackrowitz1986evaluating},  \cite{vellannals1992inadmissibility}, \cite{hwang1993empirical}, \cite{vellaisamy1993umvu}, \cite{misra1994estimation}, \cite{qomi2015admissibility} and \cite{arshad2015estimation}. \par 
Under the set up of $X_1$ and $X_2$ following gamma distributions with unknown scale parameters and having a common known shape parameter, we study this problem and obtain shrinkage estimators that shrink some naive estimators of the selected entropy towards the central entropy.
Let $\Pi_1$ and $\Pi_2$ be two populations such that the observations from the population $\Pi_i, i=1,2$, follow a gamma distribution with an unknown scale parameter $\theta_i ~(>0)$ and known shape parameter $\beta ~(>0)$. Here the gamma distributions corresponding to both the populations have the same shape parameter $\beta$. Let $X_{i1},X_{i2}, \cdots, X_{in}$ be a random sample of size $n$ drawn from the population $\Pi_i$  and $X_i=\sum_{j=1}^{n}X_{ij}$, $i=1,2$. Assume that the two random samples are mutually independent. Then $\underline{X} = (X_1,X_2)$ is a complete-sufficient (and, hence minimal sufficient) statistic for $(\theta_1,\theta_2) \in \Theta=(0,\infty) \times (0, \infty)$, $X_1$ and $X_2$  are independently distributed, and $X_i,~i=1,2,$ follows a gamma distribution with an unknown scale parameter $\theta_i$ and known shape parameter $\alpha=n\beta$. The pdf of $X_i$ is given by
\begin{equation*}
f(x|\theta_i,\alpha)=\frac{1}{\Gamma(\alpha) {\theta_i}^{\alpha}}x^{\alpha -1}e^{-\frac{x}{\theta_i}}~;~ x>0,~\alpha>0,~\theta_i>0,~i=1,2,
\end{equation*}
where, $\Gamma(\alpha)$ denotes the usual gamma function.
The Shannon entropy of the population $\Pi_i ~(i=1,2)$ is given by 
\begin{align*}
H(\theta_i)&=\mathbb{E}_{\theta_i}\left(-\ln f(X_i|\theta_i,\alpha) \right)\\
&=\ln \theta_i+\alpha+ \ln(\Gamma(\alpha))+(1-\alpha)\psi(\alpha),~i=1,2,
\end{align*}
where, $\psi(\alpha)=\frac{\Gamma'(\alpha)}{\Gamma(\alpha)}$, $\alpha >0$, denotes the digamma function; here $\Gamma'(\alpha)$ denotes the derivative of the gamma function $\Gamma(\alpha)$. Call the population corresponding to $\max\{H(\theta_1),H(\theta_2)\}$ $\left(\min\{H(\theta_1),H(\theta_2)\}\right)$ the ``worse" (``better") population. Equivalently the population corresponding to $\max\{\theta_1,\theta_2\}$ $\left(\min\{\theta_1,\theta_2\}\right)$  is called the ``worse" (``better") population. Consider the natural selection rule that selects the population corresponding to  $Z_2=\max\{X_1,X_2\}~( Z_1=\min\{X_1,X_2\})$ as the worse (better) population. The natural selection rule is known to possess several desirable optimum properties (see \cite{bahadur1952impartial}, \cite{eaton1967some}, \cite{misra1994non} and \cite{misra2014selecting}). We first discuss estimation of the selected worse entropy (entropy of the population identified as the worse population). Let $S \equiv S(\underline{X})$ denote the index of the selected population, $i.e., S = i, \text{if},  X_i = Z_2, i = 1, 2;$ here $\underline{X}=(X_1,X_2)$.
Our goal is to estimate the Shannon entropy of the selected population, which is equivalent to the estimation of  \\
\begin{equation}
\label{S1.E1}
H_S(\underline{\theta})=\begin{cases}
\ln \theta_1, & \text{if ${X}_1  \geq {X}_2$ }\\
\ln \theta_2, & \text{if ${X}_1 < {X}_2$}
\end{cases}
=\ln \theta_1 I(X_1 \geq X_2)+\ln \theta_2 I(X_1 < X_2),
\end{equation}
where, $\underline{\theta}=(\theta_1,\theta_2)$ and, for any event $A$, $I(A)$ denotes its indicator function. We consider the squared error loss function, given by
\begin{equation}\label{S1.E2}
L(\underline{\theta}, a) = \left(a-H_S(\underline{\theta})\right)^2,~
\underline{\theta} \in \Theta, ~ a \in \mathcal{A},
\end{equation}
where, $\Theta=(0,\infty)\times(0,\infty)$ is the parameter space and  $\mathcal{A}=\mathbb{R}$ is the action space. \par 
A natural estimator of $H_S(\underline{\theta})$ can be obtained by plugging in suitable estimators of $\theta_1$ and $\theta_2$ (or $\ln \theta_1$ and $\ln \theta_2$) in the expression of $H_S(\underline{\theta})$, for $\theta_1$ and $\theta_2$ (or $\ln \theta_1$ and $\ln \theta_2$). Note that, $\frac{X_i}{\alpha}$ and $\frac{X_i}{\alpha+1}$, respectively, are the maximum likelihood estimator (mle) and the best scale equivariant estimators of $\theta_i$, i=1,2. Thus two natural estimators of $H_S(\underline{\theta})$ are $\widehat{H}^{(1)}_{N_1}(\underline{X})=H_S\left(\frac{X_1}{\alpha},\frac{X_2}{\alpha}\right) =\ln Z_2-\ln(\alpha)$ and $\widehat{H}^{(1)}_{N_2}(\underline{X})=H_S\left(\frac{X_1}{\alpha+1},\frac{X_2}{\alpha+1}\right) =\ln Z_2-\ln(\alpha+1)$. Also, note that, $\ln(X_i)-\psi(\alpha)$ is the best scale equivariant estimator (also the uniformly minimum variance unbiased estimator) of $\ln \theta_i,~i=1,2$, under the squared error loss function. Thus, another natural estimator of $H_S(\underline{\theta})$ is $\widehat{H}^{(1)}_{N_3}(\underline{X})=\ln Z_2-\psi(\alpha)$. Guided by forms of natural estimators $\widehat{H}^{(1)}_{N_i}(\underline{X}),~i=1,2,3$, we consider a class $\mathcal{K}_1=\left\{\delta_{c}(\cdot):c \in \mathbb{R}\right\}$ of estimators, where $\delta_{c}(\underline{X})=\ln Z_2-c, c \in \mathbb{R}$. We call the class $\mathcal{K}_1$, the class of naive estimators of  $H_S(\underline{\theta})$. Interestingly, the class of naive estimators $\mathcal{K}_1$ also contains a class of generalized Bayes estimators of $H_S(\underline{\theta})$ under the squared error loss function \eqref{S1.E2}, with respect to a class of improper prior distributions (that also contains the Jeffreys non-informative prior distribution). It will be meaningful to identify optimum estimators within class $\mathcal{K}_1$ of naive estimators. To deal with this issue, we consider characterizing estimators that are admissible (or inadmissible) within class $\mathcal{K}_1$ of naive estimators. We will also find naive estimators dominating over the inadmissible naive estimators in class $\mathcal{K}_1$. \par 
The given estimation problem is invariant under the scale group of transformations $\mathcal{G}_a=\{g_a:g_a(\underline{x})=a\underline{x}, ~a>0\}$ and also under the group of permutations $\mathcal{G}_p=\{h_1, h_2\}$, where $\underline{x}=(x_1,x_2)$, $a\underline{x}=(ax_1,ax_2)$, $h_{1}(x_{1}, x_{2})=(x_1,x_2)$ and $ h_{2}(x_{1}, x_{2})=(x_2,x_1); (x_{1}, x_{2}) \in (0,\infty) \times (0,\infty)$.
Under the group of transformations $\mathcal{G}_a$, $\underline{X}$ $\rightarrow a\underline{X}$,~ $\underline{\theta}$ $\rightarrow a\underline{\theta}$,~~so that $\ln \theta_i \rightarrow \ln \theta_i +\ln a$, $i=1,2$, and
$H_S(\underline{\theta}) \rightarrow H_S(\underline{\theta}) +\ln a$. Under the transformation $(x_1,x_2) \to (x_2,x_1)$, $(\theta_1,\theta_2) \rightarrow (\theta_2,\theta_1)$ and $H_S(\underline{\theta}) \rightarrow H_S(\underline{\theta})$.
Therefore, it is natural to consider  estimators $\delta$ satisfying $\delta(aX_1,aX_2)= \delta(X_1, X_2) + \ln a$,  $ \forall a > 0 $, and $\delta(X_1, X_2)=\delta(X_2, X_1)$. Any such estimator will have the following form
\begin{align} \label{eq:1.3}
\delta_{\Phi}(\underline{X}) &= \ln Z_2 -\Phi(T), 
\end{align}
for some function $\Phi(\cdot)$ defined on $\left(0,1\right]$, $T=\frac{Z_1}{Z_2}$. An estimator of the type $(1.3)$ will be called a scale and permutation equivariant estimator of $H_S(\underline{\theta})$. Let $\mathcal{K}_2$ denote the class of all scale and permutation equivariant estimators of the type $(1.3)$. Clearly, $\mathcal{K}_1 \subseteq \mathcal{K}_2$. We observe that, for an equivariant estimator $\delta_{\Phi}$ $\in$ $\mathcal{K}_2$, the risk function (mean squared error) $R(\underline{\theta}, \delta_{\Phi})=\mathbb{E}_{\underline{\theta}}\left(\delta_{\Phi}(\underline{X})-H_S(\underline{\theta})\right)^2$ depends on $\underline{\theta}$ through $\mu=\frac{\mu_2}{\mu_1}$,  where, $\mu_1=\min\{\theta_1,\theta_2\}$, $\mu_2=\max\{\theta_1,\theta_2\}$ and $\mu \geq 1$. Therefore, for notational simplicity we denote $R(\underline{\theta}, \delta)$ by $R_{\mu}(\delta)$, $\mu\geq 1$. \par 
In the literature on decision theoretic estimation problems, notion of shrinkage has been widely used to find improvements over naive estimators under different settings. A shrinkage estimate of a naive estimate is an estimate obtained by shrinking a raw extreme estimate towards a central value. In many situations developing shrinkage estimators of naive estimators result in better estimators. Following seminal works of \cite{stein1}, \cite{stein2}, \cite{stein3}, \cite{jamesstein1} and \cite{brewster1974improving}, several researchers have obtained shrinkage estimators dominating naive estimators in different settings. Since $\mathcal{K}_1 \subseteq \mathcal{K}_2$, in order to find shrinkage estimators dominating naive estimators in class $\mathcal{K}_1,$ we explore the class $\mathcal{K}_2$ of scale and permutation equivariant estimators. 
Note that a naive estimator $\delta_{{c}} \in \mathcal{K}_1$ is a plug-in estimator with plug-ins for $\ln \theta_1$ and $\ln \theta_2$ as $\ln X_1 - c$ and $\ln X_2 - c$, respectively. If the data pretends that $\ln \theta_1$ and $\ln \theta_2$, or equivalently $\theta_1$ and $\theta_2$, are close (evidenced through large observed value of $\frac{Z_1}{Z_2}$), then, under the notion of shrinkage, it may be appropriate to shrink the naive estimator $\delta_{{c}}(X_1, X_2)$ towards a plug-in estimate corresponding to common scale parameter  $\theta$ ($\theta_1$=$\theta_2$=$\theta$). This leads to considering shrinkage estimators of the type 
 \begin{equation*}
 \label{S1.E1}
\delta_{{c},d}(\underline{X})=\begin{cases}
 \ln Z_2 -c , & \text{if $\frac{Z_1}{Z_2}  < d $ }\\
 \ln (X_1+X_2)-c - \ln (1+d), & \text{if $\frac{Z_1}{Z_2}  \geq  d$}
 \end{cases}, ~c\in \mathbb{R}, ~d >0.
 \end{equation*}
Here the choice $c - \ln (1+d)$ attached to $\ln (X_1+X_2)$ ensures continuity (smoothness) of $\delta_{{c},d}$. Following the ideas of \cite{stein2} and \cite{brewster1974improving}, we derive shrinkage estimators of $H_S(\underline{\theta})$ dominating over some optimum naive estimators (including some generalized Bayes estimators) belonging to class $\mathcal{K}_1$. \par 
The rest of the paper is  organized as follows: Section 2 deals with estimation of the Shannon entropy of the worse selected population, under the mean squared error criterion. Here we derive a class of generalized Bayes estimator and observe that they belong to the class $\mathcal{K}_1$ of naive estimators. Subsequently, we find optimum estimators within the class $\mathcal{K}_1$ of naive estimators. Following the ideas of \cite{stein2} and \cite{brewster1974improving}, we prove a general result for obtaining improvements over arbitrary scale and permutation equivariant estimators belonging to class $\mathcal{K}_2$. As a consequence of this general result, we obtain shrinkage estimators improving upon several optimum naive estimators belonging to class $\mathcal{K}_1$. In Section 3, analogous results are obtained for the problem of estimating the selected better entropy. In Section 4, we report a numerical study to assess performances of various competing estimators. Finally, to illustrate applicability of various optimum estimators, an analysis of real data set is provided in Section 5 of the paper.


\section{Estimation of Entropy of the Worse Selected Population} 
\par The following lemma will be useful in obtaining various findings of the paper.
\begin{lemma}
	Let $X \sim $ Gamma($\alpha$, $\beta$) (gamma distribution with scale parameter $\beta >0$ and shape parameter $\alpha>0$), so that the pdf of X is, 
	\begin{equation*}
	f(x|\alpha,\beta)=\frac{1}{\beta^{\alpha} \Gamma(\alpha)}e^{-\frac{x}{\beta}}x^{\alpha-1}~; ~~x>0,\alpha>0,\beta>0.
	\end{equation*}
Let $c_1(\alpha)=\psi(\alpha)$ and $c_2(\alpha)=2\displaystyle{\int_{0}^{\infty}}\ln(z)G_{\alpha}\left( z\right)g_{\alpha}(z)dz,~ \alpha >0$, where, $G_{\alpha}(\cdot)$ and $g_{\alpha}(\cdot)$ are the distribution function(df) and the pdf, respectively, of $Gamma(\alpha,1)$. Then, 
	\begin{itemize}
		\item[(i)] 	$\mathbb{E}(\ln X)= \ln \beta + \psi(\alpha)$,
		\item [(ii)] $\psi(\alpha) < \ln \alpha $.
		\item [(iii)] 
		$ c_1(\alpha) < \psi(2\alpha)-\ln 2 < c_2(\alpha) < \psi(2\alpha).$
	\end{itemize}
\end{lemma}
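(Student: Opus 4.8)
The plan is to prove the three parts by combining elementary gamma-distribution theory with two standard facts about the digamma function — its strict monotonicity and the Legendre duplication formula. For part (i), I would substitute $u=x/\beta$ in $\mathbb{E}(\ln X)=\int_0^\infty \ln x\, f(x|\alpha,\beta)\,dx$; the integrand splits as $\ln\beta+\ln u$ tested against the $\text{Gamma}(\alpha,1)$ density, so the constant piece contributes $\ln\beta$ and the remaining integral $\int_0^\infty (\ln u)\,g_\alpha(u)\,du$ is identified with $\psi(\alpha)$ by differentiating $\Gamma(\alpha)=\int_0^\infty u^{\alpha-1}e^{-u}\,du$ under the integral sign, which gives $\Gamma'(\alpha)=\int_0^\infty u^{\alpha-1}(\ln u)e^{-u}\,du$ and hence $\mathbb{E}(\ln Y)=\Gamma'(\alpha)/\Gamma(\alpha)=\psi(\alpha)$ for $Y\sim\text{Gamma}(\alpha,1)$. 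Part (ii) then follows immediately from Jensen's inequality: since $\psi(\alpha)=\mathbb{E}(\ln Y)$, and $\ln$ is strictly concave with $Y$ non-degenerate, we get $\mathbb{E}(\ln Y)<\ln\mathbb{E}(Y)=\ln\alpha$.

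Part (iii) is the crux, and the organizing idea is to realize every quantity appearing in it as the expected logarithm of a statistic built from two independent copies $Y_1,Y_2\sim\text{Gamma}(\alpha,1)$. Because the density of $\max\{Y_1,Y_2\}$ is exactly $2G_\alpha(z)g_\alpha(z)$, we have $c_2(\alpha)=\mathbb{E}[\ln\max\{Y_1,Y_2\}]$; because $Y_1+Y_2\sim\text{Gamma}(2\alpha,1)$, part (i) gives $\psi(2\alpha)=\mathbb{E}[\ln(Y_1+Y_2)]$; and $c_1(\alpha)=\psi(\alpha)=\mathbb{E}(\ln Y_1)$. With these representations in hand, the two inner bounds reduce to almost-sure pointwise comparisons: $\max\{Y_1,Y_2\}<Y_1+Y_2$ (strict since both variables are positive) yields $c_2(\alpha)<\psi(2\alpha)$, while $\max\{Y_1,Y_2\}\geq (Y_1+Y_2)/2$ with strict inequality off the null set $\{Y_1=Y_2\}$ yields $\psi(2\alpha)-\ln 2=\mathbb{E}[\ln((Y_1+Y_2)/2)]<c_2(\alpha)$.

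The leftmost inequality $\psi(\alpha)<\psi(2\alpha)-\ln 2$ is the one that resists this purely distributional treatment, and here I would invoke the Legendre duplication formula $\psi(2\alpha)=\ln 2+\tfrac12\psi(\alpha)+\tfrac12\psi(\alpha+\tfrac12)$. This gives $\psi(2\alpha)-\ln 2-\psi(\alpha)=\tfrac12\bigl[\psi(\alpha+\tfrac12)-\psi(\alpha)\bigr]$, which is strictly positive because $\psi$ is strictly increasing (its derivative $\psi'(\alpha)=\sum_{n\geq0}(\alpha+n)^{-2}$ is positive). I expect the main obstacle to be not any single inequality but finding these representations: once $c_2$ is read as $\mathbb{E}[\ln\max]$ and $\psi(2\alpha)$ as $\mathbb{E}[\ln(\text{sum})]$, the middle two bounds collapse to trivial pointwise facts, while the leftmost bound stubbornly requires the special-function identity together with monotonicity of $\psi$.
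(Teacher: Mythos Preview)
Your proposal is correct. Parts (i), (ii), and the two inner inequalities of (iii) match the paper's argument essentially verbatim: the paper also reads $c_2(\alpha)$ as $\mathbb{E}[\ln\max\{Z_{1,\alpha},Z_{2,\alpha}\}]$ and $\psi(2\alpha)$ as $\mathbb{E}[\ln(Z_{1,\alpha}+Z_{2,\alpha})]$, then uses the pointwise bounds $\frac{1}{2}(Z_{1,\alpha}+Z_{2,\alpha})<\max\{Z_{1,\alpha},Z_{2,\alpha}\}<Z_{1,\alpha}+Z_{2,\alpha}$.

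The one genuine difference is the leftmost inequality $\psi(\alpha)<\psi(2\alpha)-\ln 2$. You invoke the digamma duplication identity $\psi(2\alpha)=\ln 2+\tfrac12\psi(\alpha)+\tfrac12\psi(\alpha+\tfrac12)$ together with strict monotonicity of $\psi$. The paper stays within the probabilistic framework: it writes
\[
\psi(2\alpha)-\psi(\alpha)=\mathbb{E}\!\left[-\ln\frac{Z_{1,\alpha}}{Z_{1,\alpha}+Z_{2,\alpha}}\right]>-\ln\mathbb{E}\!\left[\frac{Z_{1,\alpha}}{Z_{1,\alpha}+Z_{2,\alpha}}\right]=-\ln\tfrac12=\ln 2,
\]
using Jensen's inequality and the fact that $Z_{1,\alpha}/(Z_{1,\alpha}+Z_{2,\alpha})\sim\text{Beta}(\alpha,\alpha)$ has mean $\tfrac12$. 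So your remark that this inequality ``resists purely distributional treatment'' is not quite accurate---the paper handles it with the same Jensen-plus-representation machinery as the rest. Your special-function route is equally valid and arguably cleaner if one has the duplication formula at hand; the paper's route has the virtue of using a single tool throughout and not importing outside identities.
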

\begin{proof}
	The proof of the first assertion above is straightforward and the second assertion follows using Jensen's inequality on $\psi(\alpha)=\mathbb{E}\left(\ln \frac{X}{\beta}\right)$.\\
	(iii) For proving the third assertion, let $Z_{1,\alpha}$ and $Z_{2,\alpha}$ be independent and identically distributed (iid) as $Gamma(\alpha,1)$. Further let  $Z_{2\alpha} \sim Gamma(2\alpha,1),$ so that $Z_{1,\alpha}+Z_{2,\alpha} \stackrel{d}{=} Z_{2\alpha}$ ( where $\stackrel{d}{=}$ means equality in distribution ), and 
	\begin{align*}
	c_2(\alpha)=2\displaystyle{\int_{0}^{\infty}}\ln(z)G_{\alpha}\left( z\right)g_{\alpha}(z)dz&= \mathbb{E}\left(\ln (\max\{Z_{1,\alpha}, Z_{2,\alpha}\})\right)\\&<  \mathbb{E}\left(\ln (Z_{1,\alpha}+ Z_{2,\alpha})\right)\\
	&=\mathbb{E}\left(\ln (Z_{2\alpha})\right)\\
	&=\psi(2\alpha).
	\end{align*}
Also 
\begin{align*}
c_2(\alpha)-\psi(2\alpha)&=\mathbb{E}\left[ \ln \left( \max\{Z_{1,\alpha},Z_{2,\alpha}\}\right)-\ln \left(Z_{1,\alpha}+Z_{2,\alpha}\right)\right]\\
&=\mathbb{E}\left[ \ln \left(\frac{\max\{Z_{1,\alpha},Z_{2,\alpha}\}}{Z_{1,\alpha}+Z_{2,\alpha}}\right)\right]\\
&>\ln \left(\frac{1}{2}\right)~~ \left(\text{as}~ \max\{Z_{1,\alpha},Z_{2,\alpha}\} > \frac{Z_{1,\alpha}+Z_{2,\alpha}}{2}, \text{w.p.} 1\right)\\
\implies c_2(\alpha)&> \psi(2\alpha)-\ln(2).
\end{align*}
Finally
\begin{align*}
\psi(2\alpha)-c_1(\alpha)&=\mathbb{E} \left[ \ln \left(Z_{1,\alpha}+Z_{2,\alpha}\right)-\ln \left(Z_{1,\alpha}\right)\right]\\
&=\mathbb{E}\left[-\ln \frac{Z_{1,\alpha}}{Z_{1,\alpha}+Z_{2,\alpha}}\right]\\
&>-\ln \left[\mathbb{E} \left(\frac{Z_{1,\alpha}}{Z_{1,\alpha}+Z_{2,\alpha}}\right)\right]\\
&=-\ln \mathbb{E}\left(B_{\alpha,\alpha}\right)=-\ln \left(\frac{1}{2}\right)=\ln 2,
\end{align*}
where, $B_{\alpha,\alpha}=\frac{Z_{1,\alpha}}{Z_{1,\alpha}+Z_{2,\alpha}}$	has the $Beta(\alpha,\alpha)$ distribution.
\end{proof}
\noindent We first obtain a class of generalized Bayes estimators of $H_S(\underline{\theta})$, under the squared error loss function \eqref{S1.E2}. For this, we consider following class of improper prior densities for $\underline{\theta}=\left(\theta_1,\theta_2\right)$:
\begin{align}
\Pi_{\beta}({\underline{\theta}})&=\begin{cases}
\frac{1}{\left(\theta_1 \theta_2\right)^{\beta+1}}, & \text{if $\underline{\theta} \in \Theta$ }\\
0, & \text{otherwise}.
\end{cases}	~,~~\beta >-\alpha.
\end{align}
Note that $\Pi_{0}({\underline{\theta}})=\frac{1}{\theta_1 \theta_2},\underline{\theta} \in \Theta$, is the Jeffreys non-informative prior density.
The posterior density function of $\underline{\theta}=\left(\theta_1,\theta_2\right)$ given $\underline{X}=\left(x_1,x_2\right) \in (0,\infty) \times (0,\infty)$ is obtained as
\begin{align*}
\Pi_{\underline{x},\beta}\left(\underline{\theta}\right)&=\begin{cases}\prod_{i=1}^{2}\left\{\frac{x_i^{\alpha}}{\Gamma(\alpha) \theta_i^{\alpha+\beta+1}} e^{-\frac{x_i}{\theta_i}}\right\}, & \text{if $\underline{\theta} \in \Theta$ }\\
0, & \text{o.w.}
\end{cases}	,~~~\beta > -\alpha,
\end{align*}
i.e., the posterior distribution of $\underline{\theta}$, given $\underline{X}=\underline{x} ~(\underline{x} \in (0,\infty)\times (0,\infty))$, is such that $\theta_1^{-1}$ and $\theta_2^{-1}$ are independent and $\theta_i^{-1} \sim Gamma(\alpha+\beta,x_i^{-1}),~i=1,2.$ Also, the posterior risk of an estimator $\delta$ is given by
\begin{equation}
r_{\beta}(\delta,\underline{x})=\mathbb{E}_{\Pi_{\underline{x}}}\left(\delta-H_S(\underline{\theta})\right)^2,~\underline{x} \in (0,\infty)\times (0,\infty),~ \beta >-\alpha.
\end{equation}
The generalized Bayes estimator, $\delta^{GB}_{\beta}(\underline{X})$, which minimizes the posterior risk $(2.2)$, is obtained as
\begin{align*}
\delta^{GB}_{\beta}(\underline{X})&=\mathbb{E}_{\Pi_{\underline{x},\beta}}\left[H_S(\underline{\theta})\right]\\
&= \begin{cases}
\displaystyle \mathbb{E}_{\Pi_{\underline{x},\beta}}\left(\ln \theta_1\right),
 & \text{if $X_1 \geq X_2$ }\nonumber\\\\ 
\displaystyle \mathbb{E}_{\Pi_{\underline{x},\beta}}\left(\ln \theta_2\right),
 & \text{if $X_1 <  X_2$}
\end{cases} \nonumber\\
&= \begin{cases}
\displaystyle \ln X_1-\psi(\alpha+\beta),
& \text{if $X_1 \geq X_2$ }\\\\ 
\displaystyle  \ln X_2-\psi(\alpha+\beta),
& \text{if $X_1 <  X_2$}
\end{cases} \nonumber\\
&=\ln Z_2-\psi(\alpha+\beta) \nonumber \\&= \delta_{\psi(\alpha+\beta)}(\underline{X}).
\end{align*}
Clearly the class $\mathcal{K}_{GB}=\{\delta^{GB}_{\beta}: \beta>-\alpha\}$ of generalized Bayes estimators is contained in the class $\mathcal{K}_1$ of naive estimators. In particular, the natural estimator $\widehat{H}^{(1)}_{N_3}(\underline{X})=\delta^{GB}_0(\underline{X})=\ln Z_2-\psi(\alpha)$ (analogue of the best scale equivariant estimators of $\ln (\theta_1)$ and $\ln(\theta_2)$) is the generalized Bayes estimator with respect to Jeffrey's non-informative prior density $\Pi_{0}({\underline{\theta}})=\frac{1}{\theta_1 \theta_2},\underline{\theta} \in \Theta$. \par 
Now we will attempt to find optimum estimators within the class $\mathcal{K}_1$ of naive estimators under the squared error loss function \eqref{S1.E2}. The following lemma will be useful in obtaining admissible estimators within the subclass $\mathcal{K}_1$.
\begin{lemma}
Let $U=\ln Z_2-H_S(\underline{\theta})$. Then, for any $\underline{\theta} \in \Theta$,
\begin{equation}
\mathbb{E}_{\underline{\theta}}\left(U \right)=\displaystyle{\int_{0}^{\infty}}\ln(z)G_{\alpha}\left(\frac{z}{\mu}\right)g_{\alpha}(z)dz+\displaystyle{\int_{0}^{\infty}}\ln(z)G_{\alpha}\left(\mu z\right)g_{\alpha}(z)dz,~ \mu \geq 1,
\end{equation}
where $G_{\alpha}\left(\cdot\right) $ and $g_{\alpha}\left(\cdot\right)$,  respectively, denote the df and the pdf of $Gamma(\alpha,1)$ distribution.
\end{lemma}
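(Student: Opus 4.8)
The plan is to reduce the two-dimensional expectation to a pair of one-dimensional integrals against the standard gamma law, by first writing $U$ explicitly on each selection event and then exploiting the independence of $X_1$ and $X_2$.

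First I would resolve the indicator structure of $U$. On the event $\{X_1 \ge X_2\}$ one has $Z_2 = X_1$ and, by \eqref{S1.E1}, $H_S(\underline{\theta}) = \ln\theta_1$, so that $U = \ln X_1 - \ln\theta_1 = \ln(X_1/\theta_1)$; symmetrically, on $\{X_1 < X_2\}$ one has $Z_2 = X_2$ and $H_S(\underline{\theta}) = \ln\theta_2$, giving $U = \ln(X_2/\theta_2)$. Hence
\[
U = \ln\!\left(\frac{X_1}{\theta_1}\right) I(X_1 \ge X_2) + \ln\!\left(\frac{X_2}{\theta_2}\right) I(X_1 < X_2).
\]
Next I would standardize by setting $Y_i = X_i/\theta_i$, $i=1,2$, which are i.i.d.\ $Gamma(\alpha,1)$ with pdf $g_{\alpha}$ and df $G_{\alpha}$, and rewrite the selection events as $\{X_1 \ge X_2\} = \{Y_1 \ge (\theta_2/\theta_1)Y_2\}$ and $\{X_1 < X_2\} = \{Y_1 < (\theta_2/\theta_1)Y_2\}$.

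I would then take expectations termwise. Before splitting the double integral I would record that $\mathbb{E}\,|\ln Y_i| < \infty$ (the logarithmic singularity at the origin is integrable against $y^{\alpha-1}$ since $\alpha>0$, while the upper tail is controlled by the exponential decay of $g_{\alpha}$; part (i) of Lemma 2.1 already gives the finite value $\mathbb{E}(\ln Y_i)=\psi(\alpha)$), so that Fubini's theorem legitimately applies. For the first term, holding $y_1$ fixed and integrating $y_2$ over $\{y_2 \le y_1\theta_1/\theta_2\}$ yields the factor $G_{\alpha}(y_1\theta_1/\theta_2)$; for the second term, holding $y_2$ fixed and integrating $y_1$ over $\{y_1 < y_2\theta_2/\theta_1\}$ yields $G_{\alpha}(y_2\theta_2/\theta_1)$. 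Relabelling the remaining variable as $z$ in each gives
\[
\mathbb{E}_{\underline{\theta}}(U) = \int_{0}^{\infty} \ln(z)\, G_{\alpha}\!\left(\frac{\theta_1}{\theta_2}\,z\right) g_{\alpha}(z)\,dz + \int_{0}^{\infty} \ln(z)\, G_{\alpha}\!\left(\frac{\theta_2}{\theta_1}\,z\right) g_{\alpha}(z)\,dz.
\]

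Finally I would observe that this sum is symmetric under the interchange $\theta_1 \leftrightarrow \theta_2$, hence depends on $\underline{\theta}$ only through $\mu = \max\{\theta_1,\theta_2\}/\min\{\theta_1,\theta_2\} \ge 1$: the two arguments $\theta_1/\theta_2$ and $\theta_2/\theta_1$ are precisely $1/\mu$ and $\mu$ in one order or the other, so the two integrals become $\int_{0}^{\infty}\ln(z)G_{\alpha}(z/\mu)g_{\alpha}(z)\,dz$ and $\int_{0}^{\infty}\ln(z)G_{\alpha}(\mu z)g_{\alpha}(z)\,dz$, which is the claimed identity. There is no genuine obstacle in this argument; the only points deserving a moment's care are the integrability justification that licenses Fubini and the short case check reducing $\{\theta_1/\theta_2,\theta_2/\theta_1\}$ to $\{1/\mu,\mu\}$, the latter confirming the dependence on $\mu$ alone that was asserted for the risk in the Introduction.
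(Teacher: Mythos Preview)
Your proof is correct and follows essentially the same route as the paper: decompose $U$ on the two selection events, standardize via $Y_i=X_i/\theta_i$ to i.i.d.\ $Gamma(\alpha,1)$ variables, and integrate out one coordinate to produce the $G_\alpha$ factors. The only cosmetic difference is that the paper invokes the permutation symmetry of the distribution of $U$ at the outset (assuming without loss of generality $\theta_i=\mu_i$), whereas you carry general $\theta_1,\theta_2$ through the computation and reduce to $\mu$ at the end; your added Fubini justification is a point the paper leaves implicit.
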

\begin{proof}
	Let $Y_i=\frac{X_i}{\theta_i},~i=1,2$. Then $Y_1 $ and $Y_2$ are iid $Gamma(\alpha,1)$ random variables. Note that, the pdf of $U$ is a permutation symmetric function of $(\theta_1, \theta_2)$. Thus, without loss of generality, we may take $\theta_i=\mu_i,~i=1,2.$ Then, for any $\underline{\theta} \in \Theta,$
	\begin{align*}
	\mathbb{E}_{\underline{\theta}}\left(U \right)&=\mathbb{E}_{\underline{\theta}}\left[\left(\ln Z_2-H_S(\underline{\theta}) \right)\right]\\
	&= \mathbb{E}_{\underline{\theta}}\left[\ln\left(\frac{X_1}{\theta_1}\right) I(X_1>X_2)\right] + \mathbb{E}_{\underline{\theta}}\left[\ln\left(\frac{X_2}{\theta_2}\right) I(X_2 \geq X_1)\right]\\
	&=\displaystyle{\int_{0}^{\infty}}\ln(z)G_{\alpha}\left(\frac{z}{\mu}\right)g_{\alpha}(z)dz+\displaystyle{\int_{0}^{\infty}}\ln(z)G_{\alpha}\left(\mu z\right)g_{\alpha}(z)dz.
	\end{align*}
\end{proof}
\noindent The risk (mean squared error) function of an estimator $\delta_{c} \in \mathcal{K}_1$, is given by
\begin{equation}
R_{\mu}(\delta_{c})=\mathbb{E}\left[\left(\ln Z_2-c-H_S(\underline{\theta})\right)^2\right],~~ \mu \geq 1, c \in \mathbb{R}.
\end{equation}
For any fixed $\mu \geq 1$, the risk function in $(2.4)$ is minimized at 
\begin{align*}
c \equiv c^*(\mu)&=\mathbb{E}_{\underline{\theta}}\left[\left(\ln Z_2-H_S(\underline{\theta}) \right)\right]=\mathbb{E}_{\underline{\theta}}\left(U\right)\\
&=\displaystyle{\int_{0}^{\infty}}\ln(z)G_{\alpha}\left(\frac{z}{\mu}\right)g_{\alpha}(z)dz+\displaystyle{\int_{0}^{\infty}}\ln(z)G_{\alpha}\left(\mu z\right)g_{\alpha}(z)dz,
\end{align*}
using $(2.3)$. Clearly,
\begin{align*}
\frac{d}{d \mu} c^*(\mu)&=-\frac{1}{\mu^2} \displaystyle{\int_{0}^{\infty}}z \ln(z)g_{\alpha}\left(\frac{z}{\mu}\right)g_{\alpha}(z)dz+\displaystyle{\int_{0}^{\infty}}z\ln(z)g_{\alpha}\left(\mu z\right)g_{\alpha}(z)dz\\
&=-\frac{\Gamma(2 \alpha) \mu ^{\alpha-1}}{(\Gamma(\alpha))^2 (1+\mu)^{2\alpha}} \ln \mu \leq 0,~ \forall \mu \geq 1.
\end{align*}
Consequently,
\begin{equation}
\inf\limits_{\mu \geq  1} c^*(\mu)=\psi\left(\alpha\right)=c_1(\alpha),~\text{say},~~ \text{and} 
~~~
\sup\limits_{\mu \geq  1} c^*(\mu)=2\displaystyle{\int_{0}^{\infty}}\ln(z)G_{\alpha}\left( z\right)g_{\alpha}(z)dz=c_2(\alpha),~ \text{say}.
\end{equation}
In the following theorem, we will characterize estimators that are admissible/inadmissible within the class $\mathcal{K}_1$ of naive estimators.
\begin{theorem}
	Let $c_1(\alpha)=\psi\left(\alpha\right)$ and $c_2(\alpha)=2\displaystyle{\int_{0}^{\infty}}\ln(z)G_{\alpha}\left( z\right)g_{\alpha}(z)dz$, $\alpha>0$. Then, for estimating $H_S(\underline{\theta})$, under the mean squared error criterion, the estimators in the class $\mathcal{K}_{1,M}=\left\{\delta_{c} \in \mathcal{K}_1: c \in [c_1(\alpha),c_2(\alpha)]\right\}$ are admissible within the class of estimators $\mathcal{K}_1$ of naive estimators. The estimators in the class\\ $\mathcal{K}_{1,I}=\left\{\delta_{c} \in \mathcal{K}_2 : c \in (-\infty,c_1(\alpha)) \cup (c_2(\alpha),\infty) \right\}$ are inadmissible.
	Furthermore,  for any $-\infty < d < c \leq  c_1(\alpha)$ or $c_2(\alpha) \leq c <d <\infty$,
	$$R_{\mu}\left(\delta_{{c}}\right)< R_{\mu}\left(\delta_{{d}}\right),~~~ \forall~\mu \geq 1.$$
\end{theorem}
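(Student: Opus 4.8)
The plan is to exploit the fact that, for the class $\mathcal{K}_1$, every risk function is a quadratic in the parameter $c$ whose vertex is the optimal value $c^*(\mu)$ identified in $(2.6)$. Writing $V(\mu)=\mathrm{Var}_{\underline{\theta}}(U)$ for the (finite, $c$-free) variance of $U=\ln Z_2-H_S(\underline{\theta})$, the risk in $(2.4)$ decomposes as
\begin{equation*}
R_{\mu}(\delta_{c})=V(\mu)+\bigl(c-c^*(\mu)\bigr)^2,\qquad \mu\ge 1,\ c\in\mathbb{R}.
\end{equation*}
Consequently, for any two naive estimators the variance term cancels and
\begin{equation*}
R_{\mu}(\delta_{c})-R_{\mu}(\delta_{d})=(c-d)\bigl(c+d-2c^*(\mu)\bigr),\qquad \mu\ge1.
\end{equation*}
This single identity drives all three assertions, so the first step is to record it and to recall from the computation preceding $(2.6)$ that $c^*(\mu)$ is continuous and non-increasing on $[1,\infty)$ with $c^*(1)=c_2(\alpha)$ and $\inf_{\mu\ge1}c^*(\mu)=c_1(\alpha)$; thus the range of $c^*$ over $\mu\ge1$ is exactly $(c_1(\alpha),c_2(\alpha)]$, with $c_1(\alpha)$ attained only in the limit $\mu\to\infty$.

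For the monotone-risk comparison (the final display of the theorem), I would argue by the location of $c$ and $d$ relative to the vertex. If $d<c\le c_1(\alpha)$, then $c^*(\mu)\ge c_1(\alpha)\ge c>d$ for every $\mu\ge1$, so both $c-c^*(\mu)\le0$ and $d-c^*(\mu)<0$; hence $c+d-2c^*(\mu)<0$ while $c-d>0$, and the displayed identity gives $R_{\mu}(\delta_{c})<R_{\mu}(\delta_{d})$ for all $\mu\ge1$. The case $c_2(\alpha)\le c<d$ is symmetric: now $c^*(\mu)\le c_2(\alpha)\le c<d$, so $c+d-2c^*(\mu)>0$ while $c-d<0$, again forcing $R_{\mu}(\delta_{c})<R_{\mu}(\delta_{d})$. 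Taking $c=c_1(\alpha)$ (respectively $c=c_2(\alpha)$) as the dominating estimator in these two cases shows at once that every $\delta_{c}$ with $c<c_1(\alpha)$ is dominated by $\delta_{c_1(\alpha)}$ and every $\delta_{c}$ with $c>c_2(\alpha)$ is dominated by $\delta_{c_2(\alpha)}$; since the dominating estimators lie in $\mathcal{K}_1\subseteq\mathcal{K}_2$, this establishes inadmissibility of all such $\delta_{c}$.

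For admissibility of $\delta_{c}$ with $c\in[c_1(\alpha),c_2(\alpha)]$ I would split into an interior case and a boundary case. When $c\in(c_1(\alpha),c_2(\alpha)]$, continuity and monotonicity of $c^*$ together with its computed range yield, by the intermediate value theorem, a finite $\mu_0\ge1$ with $c^*(\mu_0)=c$; at this $\mu_0$ the parabola $R_{\mu_0}(\delta_{\,\cdot\,})$ is uniquely minimized at $c$, so no $\delta_{c'}$ with $c'\ne c$ can satisfy $R_{\mu_0}(\delta_{c'})\le R_{\mu_0}(\delta_{c})$, precluding domination and giving admissibility within $\mathcal{K}_1$. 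The one genuinely delicate point, and the step I expect to be the main obstacle, is the left endpoint $c=c_1(\alpha)$, which is never attained by $c^*(\mu)$ at any finite $\mu$, so the vertex argument does not apply verbatim. Here I would instead suppose a competitor $\delta_{c'}$ and examine $R_{\mu}(\delta_{c'})-R_{\mu}(\delta_{c_1(\alpha)})=(c'-c_1(\alpha))\bigl(c'+c_1(\alpha)-2c^*(\mu)\bigr)$: if $c'>c_1(\alpha)$ this is negative only when $c^*(\mu)\ge(c'+c_1(\alpha))/2>c_1(\alpha)$, which fails for all large $\mu$ because $c^*(\mu)\downarrow c_1(\alpha)$; if $c'<c_1(\alpha)$ it is negative only when $c^*(\mu)\le(c'+c_1(\alpha))/2<c_1(\alpha)$, impossible since $c^*(\mu)\ge c_1(\alpha)$ throughout. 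Either way no competitor can dominate, so $\delta_{c_1(\alpha)}$ is admissible and the proof is complete.
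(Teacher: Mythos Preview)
Your proof is correct and follows essentially the same approach as the paper: both exploit that $R_{\mu}(\delta_c)$ is a parabola in $c$ with vertex $c^*(\mu)$, use the continuity and monotonicity of $c^*(\mu)$ to show every interior $c$ is a risk minimizer at some $\mu$, and handle the remaining endpoint by a limiting argument. In fact your treatment of the boundary is slightly more precise than the paper's, since you correctly identify that $c_2(\alpha)=c^*(1)$ is attained while it is $c_1(\alpha)$ that requires the limiting argument (the paper's proof has these roles reversed).
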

\begin{proof}
For any fixed $\mu \geq 1$, the risk function $R_{\mu}(\delta_c)$, defined by $(2.4)$, is a strictly increasing function of $c$ on $[c^*(\mu), \infty)$, it is a decreasing function of $c$ on $(-\infty, c^*(\mu)]$, and it achieves its minimum at $c=c^*(\mu)$. Since $c^*(\mu)$  is a continuous function of $\mu$, using $(2.5)$, it follows that $c^*(\mu)$ takes all values in the interval $[c_1(\alpha), c_2(\alpha))$. Thus, we conclude that each $c \in [c_1(\alpha), c_2(\alpha))$ minimizes the risk function $R_{\mu}(\delta_c)$ at some $\mu \in [1,\infty)$. This establishes that the estimators $\delta_{c}(\cdot)$, for $c \in [c_1(\alpha), c_2(\alpha))$ are admissible within the subclass $\mathcal{K}_1$. Further, continuity of the risk function ensures the admissibility of estimator $\delta_{c_2(\alpha)}(\cdot)$, within the subclass $\mathcal{K}_1$. This proves the first assertion. Also, since $c_1(\alpha)=\psi(\alpha) \leq c^*(\mu) < c_2(\alpha)$, $\forall$ $\mu \geq 1$, it follows for any $\mu \geq 1$ the risk function $R_{\mu}(\delta_c)$ is a strictly decreasing function of $c$ on $(-\infty, c_1(\alpha)]$ and it is a strictly increasing function of $c$ on $[c_2(\alpha), \infty)$. This proves the second assertion. Hence the result follows.	
\end{proof}
In the sequel we discuss optimality of natural estimators $\widehat{H}^{(1)}_{N_i}(\underline{X}),~i=1,2,3.$ It directly follows from Theorem 2.1 that within the class $\mathcal{K}_1$ of naive estimators of $H_S(\underline{\theta})$, the natural estimator 	$\widehat{H}^{(1)}_{N_3}(\underline{X})=\delta_{\psi(\alpha)}(\underline{X})= \ln Z_2 -\psi(\alpha)$ is an admissible estimator of $H_S(\underline{\theta})$. Using lemma 2.1, we have $\ln (\alpha+1) > \ln \alpha > \psi(\alpha)=c_1(\alpha),~\forall \alpha >0 $. It can be numerically verified (see Table 2.1 and Figure 2.1 ) that $c_2(\alpha) > (<) \ln \alpha $ for $\alpha > (<) ~0.63$ and that $c_2(\alpha)> (<) \ln (\alpha+1)$, for $\alpha > (<) ~6.05$. The above discussion along with Theorem 2.1, yields the following corollary.
\begin{corollary}
	\begin{itemize}
		\item[(a)] Within the class $\mathcal{K}_1$ of naive estimators of $H_S(\underline{\theta})$, the natural estimator $\widehat{H}^{(1)}_{N_3}(\underline{X})=\delta_{\psi(\alpha)}(\underline{X})= \ln Z_2 -\psi(\alpha)$ is an admissible estimator for estimating $H_S(\underline{\theta})$.
		
		\item[(b)] For $\alpha \in (0,0.63)$, the natural estimator $\widehat{H}^{(1)}_{N_1}(\underline{X})=\delta_{\ln \alpha}(\underline{X})= \ln Z_2 -\ln \alpha$ is an inadmissible estimator for estimating $H_S(\underline{\theta})$ and it is dominated by the naive estimator $\delta_{{c_2(\alpha)}}(\underline{X})=\ln Z_2 -c_2(\alpha)$. However, for $\alpha \in (0.63,\infty)$, the natural estimator $\widehat{H}^{(1)}_{N_1}(\underline{X})$ is an admissible estimator within the class $\mathcal{K}_1$ of naive estimators.
		
		\item[(c)]  For $\alpha \in (0,6.05)$, the natural estimator $\widehat{H}^{(1)}_{N_2}(\underline{X})=\delta_{\ln \alpha}(\underline{X})= \ln Z_2 -\ln (\alpha+1)$ is an inadmissible estimator for estimating $H_S(\underline{\theta})$ and it is dominated by the naive estimator $\delta_{{c_2(\alpha)}}(\underline{X})=\ln Z_2 -c_2(\alpha)$. However, for $\alpha \in (6.05,\infty)$, the natural estimator $\widehat{H}^{(1)}_{N_2}(\underline{X})$ is an admissible estimator within the class $\mathcal{K}_1$ of naive estimators.
		
		\item[(d)] For $\alpha \in (0,0.63)$, the natural estimator $\widehat{H}^{(1)}_{N_1}(\underline{X})=\delta_{\ln \alpha}(\underline{X})= \ln Z_2 -\ln \alpha$ dominates the natural estimator $\widehat{H}^{(1)}_{N_2}(\underline{X})=\delta_{\ln (\alpha+1)}(\underline{X})= \ln Z_2 -\ln (\alpha+1)$, and both of them are dominated by the naive estimator $\delta_{{c_2(\alpha)}}(\underline{X})=\ln Z_2- c_2(\alpha)$.
	\end{itemize}
\end{corollary}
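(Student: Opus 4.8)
The plan is to reduce every assertion to a simple bookkeeping exercise: locate each of the three relevant constants $\psi(\alpha)$, $\ln\alpha$ and $\ln(\alpha+1)$ relative to the admissibility window $[c_1(\alpha),c_2(\alpha)]$ supplied by Theorem 2.1, and then read off admissibility, inadmissibility and domination directly from that theorem. Since $\widehat{H}^{(1)}_{N_1}=\delta_{\ln\alpha}$, $\widehat{H}^{(1)}_{N_2}=\delta_{\ln(\alpha+1)}$ and $\widehat{H}^{(1)}_{N_3}=\delta_{\psi(\alpha)}$ are all members of $\mathcal{K}_1$, no fresh risk computation is needed; the strict monotonicity of $R_{\mu}(\delta_{c})$ in the two inadmissibility tails, established in the last line of Theorem 2.1, will supply all the domination claims, with $\delta_{c_2(\alpha)}$ playing the role of the dominating estimator in each case.

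First I would pin down the universal ordering of the three constants. By Lemma 2.1(ii) we have $\psi(\alpha)<\ln\alpha$ for every $\alpha>0$, and trivially $\ln\alpha<\ln(\alpha+1)$, so
\[
c_1(\alpha)=\psi(\alpha)<\ln\alpha<\ln(\alpha+1),\qquad \alpha>0 .
\]
In particular $c_1(\alpha)=\psi(\alpha)$ always coincides with the left endpoint of the admissibility window, which settles part (a) at once: $\psi(\alpha)\in[c_1(\alpha),c_2(\alpha)]$, so $\widehat{H}^{(1)}_{N_3}=\delta_{\psi(\alpha)}$ is admissible within $\mathcal{K}_1$ by the first assertion of Theorem 2.1.

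The remaining parts hinge entirely on where the right endpoint $c_2(\alpha)$ sits relative to $\ln\alpha$ and $\ln(\alpha+1)$, and here I would invoke the numerical facts recorded just before the corollary (Table 2.1 and Figure 2.1): $c_2(\alpha)<\ln\alpha$ for $\alpha<0.63$ and $c_2(\alpha)>\ln\alpha$ for $\alpha>0.63$, with the analogous crossing of $\ln(\alpha+1)$ at $\alpha\approx6.05$. For part (b): when $\alpha\in(0,0.63)$ the ordering above gives $\ln\alpha\in(c_2(\alpha),\infty)$, the right-hand inadmissibility tail, so $\widehat{H}^{(1)}_{N_1}$ is inadmissible; applying the monotonicity clause of Theorem 2.1 with $c=c_2(\alpha)$ and $d=\ln\alpha$, which satisfy $c_2(\alpha)\le c<d<\infty$, yields $R_{\mu}(\delta_{c_2(\alpha)})<R_{\mu}(\delta_{\ln\alpha})$ for all $\mu\ge1$, i.e. $\delta_{c_2(\alpha)}$ dominates $\widehat{H}^{(1)}_{N_1}$; when $\alpha\in(0.63,\infty)$ instead $\ln\alpha\in(c_1(\alpha),c_2(\alpha))$, giving admissibility. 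Part (c) is word-for-word the same with the threshold $6.05$ and $\ln(\alpha+1)$ in place of $\ln\alpha$.

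Finally, for part (d) with $\alpha\in(0,0.63)$, both $\ln\alpha$ and $\ln(\alpha+1)$ lie in the tail $(c_2(\alpha),\infty)$ and are ordered as $c_2(\alpha)<\ln\alpha<\ln(\alpha+1)$. Two applications of the monotonicity clause along this chain give $R_{\mu}(\delta_{c_2(\alpha)})<R_{\mu}(\delta_{\ln\alpha})<R_{\mu}(\delta_{\ln(\alpha+1)})$ for every $\mu\ge1$, which simultaneously shows that $\widehat{H}^{(1)}_{N_1}$ dominates $\widehat{H}^{(1)}_{N_2}$ and that $\delta_{c_2(\alpha)}$ dominates both. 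The main obstacle is therefore not analytic but numerical: the entire corollary is an immediate consequence of Theorem 2.1 once the sign behaviour of $c_2(\alpha)-\ln\alpha$ and $c_2(\alpha)-\ln(\alpha+1)$ is known, and since $c_2(\alpha)$ has no elementary closed form I would justify these sign changes by the tabulated and plotted values rather than by a symbolic estimate.
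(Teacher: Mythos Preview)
Your proposal is correct and mirrors the paper's own argument almost exactly: the paper likewise observes (via Lemma 2.1(ii)) that $c_1(\alpha)=\psi(\alpha)<\ln\alpha<\ln(\alpha+1)$, invokes the numerically verified crossings of $c_2(\alpha)$ with $\ln\alpha$ and $\ln(\alpha+1)$ at $\alpha\approx0.63$ and $\alpha\approx6.05$ (Table 2.1, Figure 2.1), and then reads off all four parts directly from Theorem 2.1. Your slightly more explicit invocation of the monotonicity clause for the domination claims is a welcome clarification, but the approach is the same.
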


Now we discuss optimality of generalized Bayes estimators, belonging to the class $\mathcal{K}_{GB}=\left\{ \delta^{GB}_{\beta}: \beta >-\alpha\right\}= \left\{ \delta_{\psi(\alpha+\beta)}: \beta >-\alpha\right\} \subseteq \mathcal{K}_1$, where  $\delta^{GB}_{\beta}(\underline{X})=\delta_{\psi(\alpha+\beta)}(\underline{X})=\ln Z_2 -\psi(\alpha+\beta), \beta > -\alpha$. Note that $\psi(t)$ is an increasing function of $t \in (0,\infty)$ (see \cite{abramowitz1964handbook}). For any fixed $\alpha>0$, define $\beta_0(\alpha)=\psi^{-1}(c_2(\alpha))-\alpha$. Since $c_2(\alpha) > c_1(\alpha)=\psi(\alpha)$ (i.e., $\beta_0(\alpha)>0$), $\forall$ $\alpha >0$ and $\psi(2\alpha)>c_2(\alpha)$ (Lemma 2.1), (i.e., $\beta_0(\alpha)<\alpha$) $\forall \alpha >0$, we have $\beta_0(\alpha) \in (0,\alpha),~\forall \alpha >0.$ Moreover, $\psi(\beta+\alpha)< c_1(\alpha)=\psi(\alpha)~ \forall \beta <0$, $\psi(\beta+\alpha)> c_2(\alpha)~ \forall \beta >\beta_0(\alpha)$ and $c_1(\alpha) \leq \psi(\beta+\alpha) \leq c_2(\alpha),~ \forall \beta \in [0,\beta_0(\alpha)]$. Now, we have the following corollary to Theorem 2.1.

\begin{corollary}
	For any $\alpha>0$, define $\beta_0(\alpha)=\psi^{-1}(c_2(\alpha))-\alpha$. Then
	\begin{itemize}
		\item[(a)] the generalized Bayes estimators $\{\delta_{\psi(\alpha+\beta)}: 0 \leq \beta \leq \beta_0(\alpha)\}$ are admissible within the class $\mathcal{K}_1$ of naive estimators. 
		\item[(b)] the generalized Bayes estimators $\{\delta_{\psi(\alpha+\beta)}: \beta \in (-\alpha,0) \cup ( \beta_0(\alpha),\infty)\}$ are inadmissible for estimating  $H_S(\underline{\theta})$. For any $\beta \in (-\alpha,0)$, the generalized Bayes estimator $\delta_{\psi(\alpha+\beta)}(\underline{X})=\ln Z_2 - \psi(\alpha+\beta)$ is dominated by the natural estimator $\widehat{H}^{(1)}_{N_3}(\underline{X})=\delta_{c_1(\alpha)}(\underline{X})=\ln Z_2-\psi(\alpha)$, and for any $\beta \in \left(\beta_0(\alpha), \infty\right)$, the generalized Bayes estimator $\delta_{\psi(\alpha+\beta)}$ is dominated by the naive estimator $\delta_{{c_2(\alpha)}}(\underline{X})=\ln Z_2-c_2(\alpha)$.
	\end{itemize}
\end{corollary}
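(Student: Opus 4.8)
The plan is to prove this corollary as a direct transcription of Theorem 2.1 under the reparametrization $c=\psi(\alpha+\beta)$. Every estimator in $\mathcal{K}_{GB}$ has the form $\delta_{\psi(\alpha+\beta)}$ with $\beta>-\alpha$, so it already lies in $\mathcal{K}_1$, and its admissibility or inadmissibility is governed entirely by where the constant $\psi(\alpha+\beta)$ sits relative to the interval $[c_1(\alpha),c_2(\alpha)]$. Since $\psi$ is strictly increasing on $(0,\infty)$, the map $\beta\mapsto\psi(\alpha+\beta)$ is strictly increasing, and the location of $\psi(\alpha+\beta)$ relative to $c_1(\alpha)$ and $c_2(\alpha)$ is exactly what the paragraph preceding the corollary records through the definition $\beta_0(\alpha)=\psi^{-1}(c_2(\alpha))-\alpha$. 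I would first note that $\beta_0(\alpha)$ is well defined and lies in $(0,\alpha)$: by Lemma 2.1(iii) we have $\psi(\alpha)=c_1(\alpha)<c_2(\alpha)<\psi(2\alpha)$, so $c_2(\alpha)$ lies in the range of $\psi$ and $\psi^{-1}(c_2(\alpha))\in(\alpha,2\alpha)$.

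For part (a), take $\beta\in[0,\beta_0(\alpha)]$. Monotonicity of $\psi$ together with $\psi(\alpha)=c_1(\alpha)$ and $\psi(\alpha+\beta_0(\alpha))=c_2(\alpha)$ gives $c_1(\alpha)\le\psi(\alpha+\beta)\le c_2(\alpha)$, i.e. the constant $c=\psi(\alpha+\beta)$ falls in the admissible interval $[c_1(\alpha),c_2(\alpha)]$ of Theorem 2.1. Admissibility of $\delta_{\psi(\alpha+\beta)}$ within $\mathcal{K}_1$ is then immediate from the first assertion of Theorem 2.1.

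For part (b), I would split into the two regimes and invoke the strict risk inequality of Theorem 2.1 to exhibit the dominating estimator. If $\beta\in(-\alpha,0)$ then $\alpha+\beta<\alpha$, so $\psi(\alpha+\beta)<\psi(\alpha)=c_1(\alpha)$; applying the domination clause of Theorem 2.1 with $d=\psi(\alpha+\beta)<c=c_1(\alpha)$ yields $R_{\mu}(\delta_{c_1(\alpha)})<R_{\mu}(\delta_{\psi(\alpha+\beta)})$ for every $\mu\ge1$, which is precisely domination of $\delta_{\psi(\alpha+\beta)}$ by $\widehat{H}^{(1)}_{N_3}(\underline{X})=\delta_{c_1(\alpha)}$. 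If $\beta>\beta_0(\alpha)$ then $\psi(\alpha+\beta)>\psi(\alpha+\beta_0(\alpha))=c_2(\alpha)$; applying the same clause with $c=c_2(\alpha)<d=\psi(\alpha+\beta)$ gives $R_{\mu}(\delta_{c_2(\alpha)})<R_{\mu}(\delta_{\psi(\alpha+\beta)})$ for all $\mu\ge1$, so $\delta_{c_2(\alpha)}$ dominates. In both cases the dominated estimator is thereby inadmissible.

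I do not expect a genuine obstacle here: the analytic content (monotonicity of $c^{*}(\mu)$, the evaluation of its infimum and supremum as $c_1(\alpha)$ and $c_2(\alpha)$, and the resulting strict monotonicity of $R_{\mu}(\delta_c)$ in $c$ on either side of the minimizer $c^{*}(\mu)$) is already packaged in Theorem 2.1, while the placement of $\psi(\alpha+\beta)$ is already packaged in the definition of $\beta_0(\alpha)$. The only points requiring care are bookkeeping ones: confirming that $\beta_0(\alpha)\in(0,\alpha)$ so that the three $\beta$-ranges in the statement partition $(-\alpha,\infty)$, and checking the endpoint values $\psi(\alpha+0)=c_1(\alpha)$ and $\psi(\alpha+\beta_0(\alpha))=c_2(\alpha)$ so that the closed interval $[0,\beta_0(\alpha)]$ maps exactly onto the closed admissible interval $[c_1(\alpha),c_2(\alpha)]$.
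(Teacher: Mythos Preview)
Your proposal is correct and mirrors the paper's own argument essentially line for line: the paper deduces the corollary from Theorem~2.1 via the reparametrization $c=\psi(\alpha+\beta)$, uses the strict monotonicity of $\psi$ together with Lemma~2.1(iii) to place $\beta_0(\alpha)$ in $(0,\alpha)$ and to locate $\psi(\alpha+\beta)$ relative to $[c_1(\alpha),c_2(\alpha)]$, and then reads off admissibility/inadmissibility and the dominating estimators directly from Theorem~2.1. There is nothing to add.
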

For various values of $\alpha$'s ($\alpha>0$), $c_1(\alpha)$, $c_2(\alpha)$, $\ln \alpha $, $\ln (\alpha+1)$ and $\beta_0(\alpha)$ are tabulated in Table 2.1.
\begin{table}[]
	\centering
	\caption*{\pmb{Table 2.1: Values of $c_1(\alpha)$, $c_2(\alpha)$, $\beta_0(\alpha)$ and $\psi(2\alpha)-\ln 2$ for various values of $\alpha$}}
	\scalebox{1.4}{
	\begin{tabular}{|c|c|c|c|c|c|c|}
		\hline
		$\pmb\alpha$ &
		$\pmb{ c_1(\alpha)}$ &
		$\pmb{ c_2(\alpha)}$ &
		$\pmb{ \ln \alpha}$ &
		$\pmb{ \ln (\alpha+1)}$ &
		$\pmb{ \beta_0(\alpha)}$ &
		$\pmb{ \psi(2\alpha)-\ln 2}$ \\ 
		 &
		 &
		 &
	     &
		 &
		$\pmb{ =\psi^{-1}(c_2(\alpha))-\alpha}$ 
		& \\ \hline
		0.2   & -5.289 & -2.682 & -1.609 & 0.182 & 0.183 & -3.254  \\ \hline
		0.4   & -2.561 & -1.158 & -0.916 & 0.336 & 0.322 & -1.658 \\ \hline
		0.6   & -1.54  & -0.531 & -0.51  & 0.47  & 0.428 & -0.982 \\ \hline
		\textbf{0.63}  & -1.425 & \textbf{-0.457} & \textbf{-0.457} & 0.49  & 0.443 & -0.901  \\ \hline
		0.8   & -0.965 & -0.152 & -0.223 & 0.587 & 0.514 & -0.567  \\ \hline
		1     & -0.577 & 0.115  & 0      & 0.693 & 0.588 & -0.270  \\ \hline
		1.5   & 0.036  & 0.566  & 0.405  & 0.916 & 0.738 & 0.229  \\ \hline
		2     & 0.422  & 0.865  & 0.693  & 1.098 & 0.86  & 0.562  \\ \hline
		2.5   & 0.703  & 1.091  & 0.916  & 1.252 & 0.964 & 0.812  \\ \hline
		3     & 0.922  & 1.272  & 1.098  & 1.386 & 1.057 & 1.012  \\ \hline
		3.5   & 1.103  & 1.423  & 1.252  & 1.504 & 1.141 & 1.179  \\ \hline
		4     & 1.256  & 1.553  & 1.386  & 1.609 & 1.218 & 1.322 \\ \hline
		4.5   & 1.388  & 1.667  & 1.504  & 1.704 & 1.291 & 1.447 \\ \hline
		5     & 1.506  & 1.769  & 1.609  & 1.791 & 1.359 & 1.558  \\ \hline
		5.5   & 1.611  & 1.861  & 1.704  & 1.871 & 1.424 & 1.658  \\ \hline
		6     & 1.706  & 1.944  & 1.791  & 1.945 & 1.486 & 1.749  \\ \hline
		\textbf{6.05} & 1.716  & \textbf{1.953}  & 1.8    & \textbf{1.953} & 1.494 & 1.759  \\ \hline
		6.5   & 1.792  & 2.021  & 1.871  & 2.014 & 1.55  & 1.832  \\ \hline
		7     & 1.872  & 2.092  & 1.945  & 2.07  & 1.661 & 1.909  \\ \hline
		8     & 2.015  & 2.22   & 2.079  & 2.197 & 2.458 & 2.047  \\ \hline
		9     & 2.14   & 2.333  & 2.197  & 2.302 & 3.042 & 2.169  \\ \hline
		10    & 2.251  & 2.433  & 2.302  & 2.397 & 3.459 & 2.277  \\ \hline
		12    & 2.442  & 2.608  & 2.484  & 2.564 & 3.91  & 2.463  \\ \hline
		15    & 2.674  & 2.822  & 2.708  & 2.772 & 3.913 & 2.691  \\ \hline
		16    & 2.741 &  2.883  &  2.772 & 2.833 &  3.782&  2.756  \\ \hline
		18    & 2.862  & 2.996  & 2.890  & 2.944 & 3.369 & 2.876  \\ \hline
		20    & 2.970  & 3.098  & 2.995  & 3.044 & 2.790 & 2.983  \\ \hline
	\end{tabular}}
\end{table}

\FloatBarrier
\vskip -0.6in
\begin{figure}[!h]
	\centering
	\includegraphics[height=3.6in,width=4.3in]{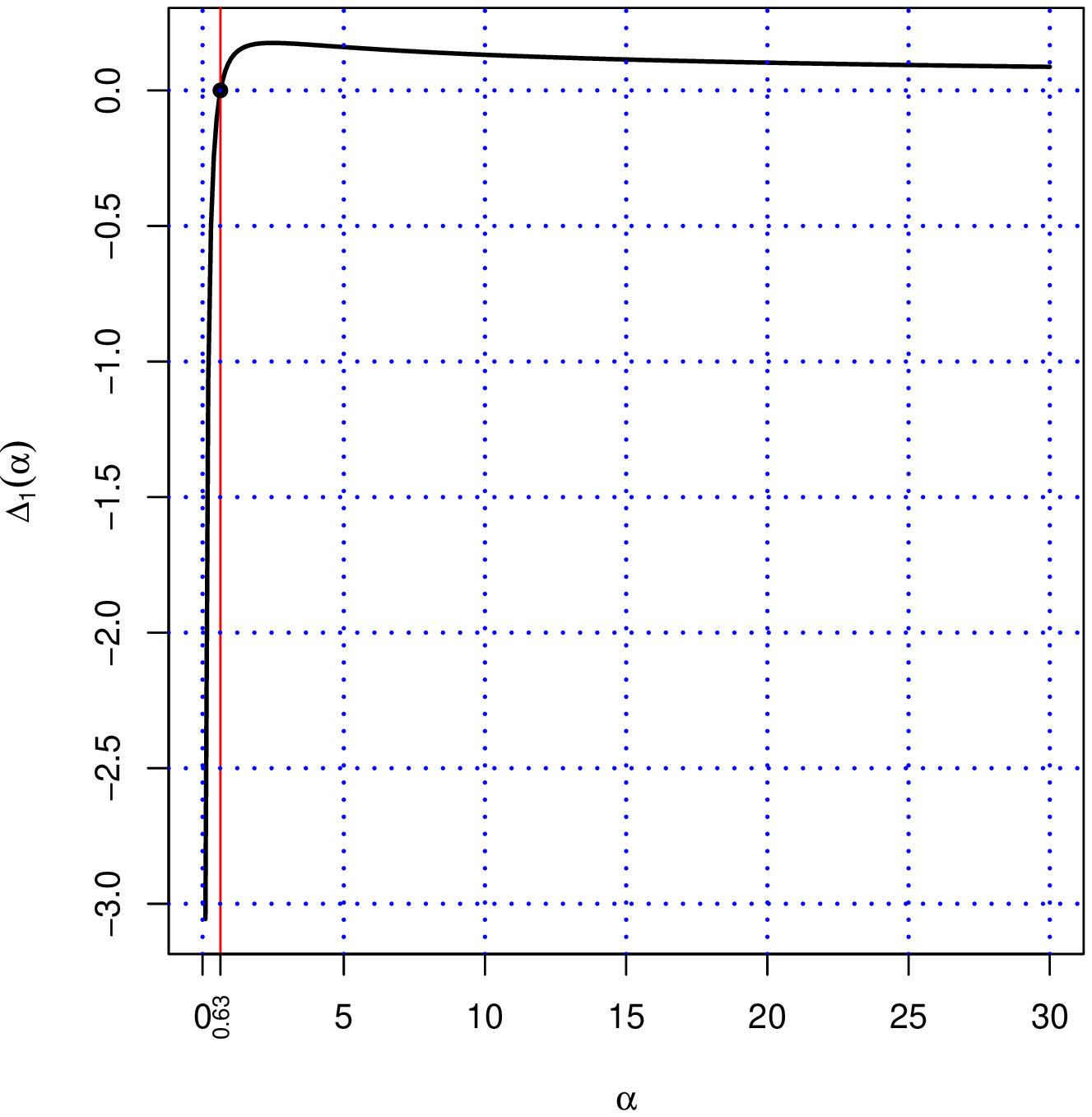}
\end{figure}
\vskip -0.3in
\begin{figure}[!h]
	\centering
	\includegraphics[height=3.6in,width=4.25in]{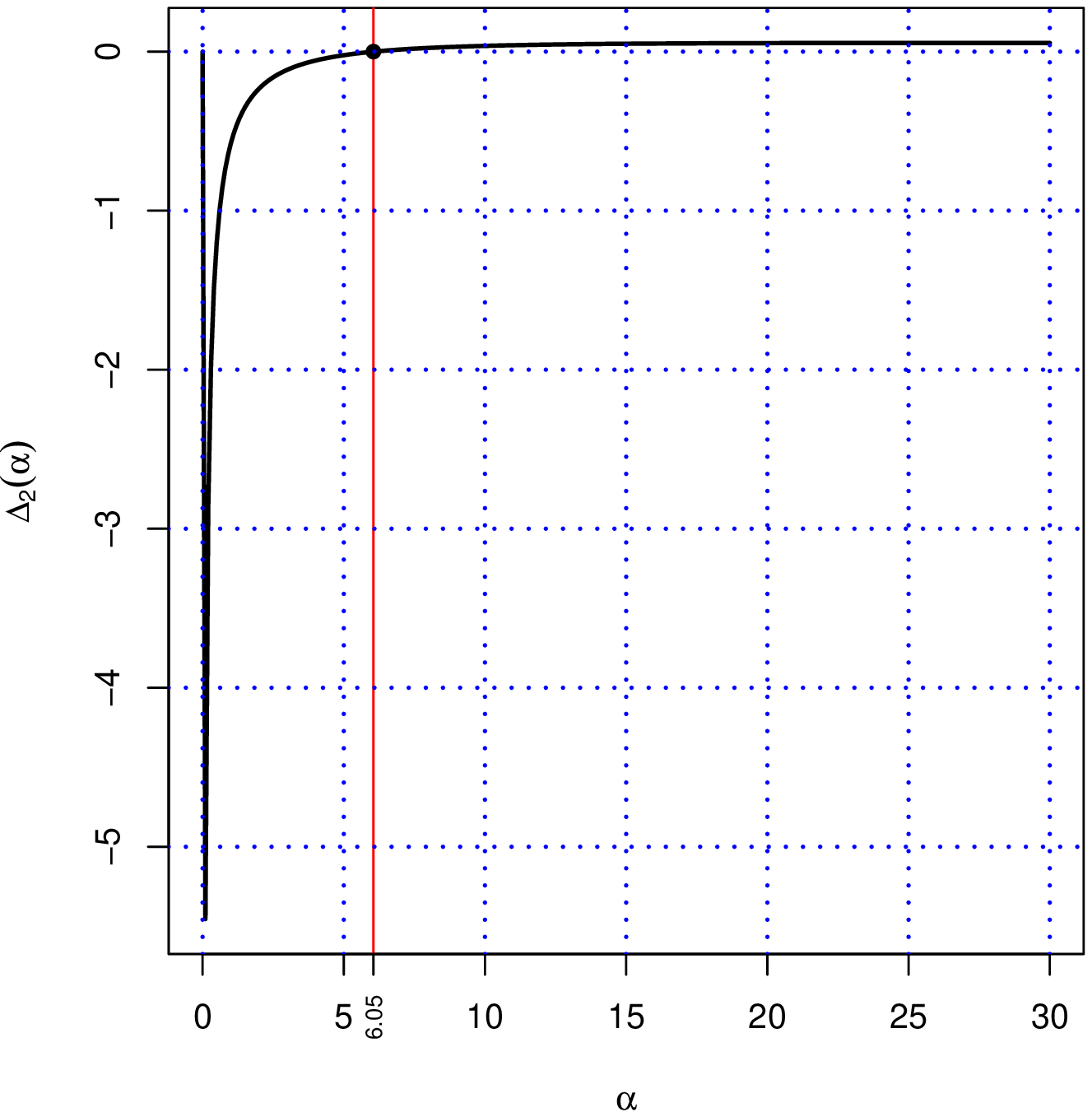}
	\caption{\textbf{Plots of $\Delta_1(\alpha)=2\displaystyle{\int_{0}^{\infty}}\ln(z)G_{\alpha}\left( z\right)g_{\alpha}(z)dz -\ln \alpha $ and $\Delta_{2}(\alpha)=2\displaystyle{\int_{0}^{\infty}}\ln(z)G_{\alpha}\left( z\right)g_{\alpha}(z)dz -\ln (\alpha+1) $.}}
\end{figure}
\FloatBarrier
In the following subsection we establish a general result that, under certain conditions, provides shrinkage type improvements over an an arbitrary scale and permutation equivariant estimator. As a consequence of this general result, we obtain shrinkage estimators dominating various naive estimators of $H_S(\underline{\theta})$ belonging to the class $\mathcal{K}_1$. 
\subsection{Shrinkage type improvements over scale and permutation equivariant estimators}
In this section we will attempt to derive conditions under which shrinkage type improvements over an arbitrary scale and permutation equivariant estimator can be found. For this purpose, we will consider orbit by orbit improvement of the risk function, as proposed by \cite{brewster1974improving}. Recall that a typical estimator in the class $\mathcal{K}_2$ of scale and permutation equivariant estimator of $ H_S(\underline{\theta})$ is of the form $\delta_{\Phi}(\underline{X}) = \ln Z_2 -\Phi(T) $, for some function function $\Phi(\cdot):\left(0,1\right] \to \mathbb{R}$, where $T=\frac{Z_1}{Z_2}$. 
We first provide two supporting lemmas that will be useful in proving the main result of this section.  

\begin{lemma} \label{S4,L2}
	 Let $U=\ln Z_2-H_S(\underline{\theta})$. For any fixed $ t \in (0,1]$ and $\mu \geq 1$, the conditional pdf of $U$, given $T=t$, is given by
	\begin{align*}
	f_{1,\underline{\theta}}(u|t)&=\frac{\frac{1}{\mu^{\alpha}}e^{2\alpha u}e^{-\left(1+\frac{t}{\mu}\right)e^{u}}+\mu^{\alpha} e^{2\alpha u}e^{-\left(1+t\mu\right)e^{u}}}{\Gamma(2\alpha)\left[\frac{1}{\mu^{\alpha}\left(1+\frac{t}{\mu}\right)^{2\alpha}}+\frac{\mu^{\alpha}}{\left(1+t \mu\right)^{2\alpha}}\right]},~ -\infty< u< \infty.
	\end{align*}
\end{lemma}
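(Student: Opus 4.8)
The plan is to obtain the joint density of $(U,T)$ in closed form and then divide by the marginal density of $T$. First I would invoke the symmetry observation already used in the proof of Lemma 2.2: since $Z_1,Z_2$ are permutation-symmetric functions of $(X_1,X_2)$ and $H_S(\underline{\theta})$ picks out the log-scale of whichever coordinate attains the maximum, both $U$ and $T$ are invariant under the simultaneous swap $(X_1,\theta_1)\leftrightarrow(X_2,\theta_2)$. Hence the joint law of $(U,T)$ depends on $\underline{\theta}$ only through $\mu=\mu_2/\mu_1$, and without loss of generality I may take $\theta_1=\mu_1$ and $\theta_2=\mu_2=\mu\mu_1$ with $\mu\ge 1$.

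Next I would partition the sample space into the two events $\{X_1\ge X_2\}$ and $\{X_1<X_2\}$ and treat each separately. On $\{X_1\ge X_2\}$ we have $Z_2=X_1$, $T=X_2/X_1$ and $U=\ln(X_1/\theta_1)$, while on $\{X_1<X_2\}$ we have $Z_2=X_2$, $T=X_1/X_2$ and $U=\ln(X_2/\theta_2)$. Starting from the product gamma density of $(X_1,X_2)$, on each region I would apply the change of variables sending $(x_1,x_2)$ to $(z,t)$, where $z$ is the maximum and $t=Z_1/Z_2$; this map has Jacobian $z$ and yields the joint density of $(Z_2,T)$ on that region. I would then substitute $z=\theta_1 e^{u}$ on the first region and $z=\theta_2 e^{u}$ on the second, so that $z^{2\alpha-1}\,dz=\theta_i^{2\alpha}e^{2\alpha u}\,du$, to pass to the density of $(U,T)$.

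Summing the two contributions and using $\theta_1/\theta_2=1/\mu$ together with $\theta_2/\theta_1=\mu$, the joint density of $(U,T)$ takes the form $\frac{t^{\alpha-1}}{\Gamma(\alpha)^2}e^{2\alpha u}\bigl[\mu^{-\alpha}e^{-(1+t/\mu)e^{u}}+\mu^{\alpha}e^{-(1+t\mu)e^{u}}\bigr]$. To obtain the marginal of $T$ I would integrate out $u$ using the identity $\int_{-\infty}^{\infty}e^{2\alpha u}e^{-a e^{u}}\,du=\Gamma(2\alpha)/a^{2\alpha}$ for $a>0$ (the substitution $w=e^{u}$ reduces this to the gamma integral), applied with $a=1+t/\mu$ and $a=1+t\mu$. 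Dividing the joint density by this marginal, the common factor $t^{\alpha-1}/\Gamma(\alpha)^2$ cancels and the claimed expression for $f_{1,\underline{\theta}}(u\mid t)$ follows.

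I expect the only real care to be the bookkeeping in the two-region split: the two branches require different substitutions ($z=\theta_1 e^{u}$ versus $z=\theta_2 e^{u}$), and it is precisely this asymmetry that produces the $\mu^{-\alpha}$ and $\mu^{\alpha}$ prefactors in the numerator and the matching terms in the denominator. Everything else is a routine Jacobian computation together with a single gamma integral.
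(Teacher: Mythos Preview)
Your proposal is correct and follows essentially the same route as the paper: the same permutation-symmetry reduction to $\theta_i=\mu_i$, the same two-region split according to which $X_i$ attains the maximum, and the same passage to the standardized variables that produces the $\mu^{-\alpha}$ and $\mu^{\alpha}$ prefactors. The only difference is mechanical: the paper computes the conditional distribution via the limit $\lim_{h\downarrow 0}\mathbb{P}(U\le u,\,t-h<T\le t)/h$ and then differentiates in $u$, whereas you go straight to the joint density by a Jacobian change of variables and integrate out $u$ with the gamma integral---your route is slightly more direct but otherwise equivalent.
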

\begin{proof} Let $t \in (0,1]$ and $\mu \in [1,\infty)$ be fixed. Since the pdf of $U$, given $T=t$, is a permutation symmetric function of $(\theta_1,\theta_2)$, without loss of generality, we may assume that $\theta_i=\mu_i,~i=1,2.$
	Let $h_{\underline{\theta}}(\cdot)$ denote the pdf of $T$. Then, for any fixed $t \in (0,1]$, the df of $U$, given $T=t$, is,
	\begin{align}
	F_{1,\underline{\theta}}(u|t)&=\mathbb{P}_{\underline{\theta}}\left(U \leq u|T=t\right) \nonumber \\
	&=\frac{1}{h_{\underline{\theta}}(t)}\lim\limits_{h \downarrow 0} \frac{N_1(h|u,t,\underline{\theta})}{h},
	\end{align}
where, for $-\infty < u < \infty$ and $h>0$ (sufficiently small),
\begin{align*}
N_1(h|u,t,\underline{\theta})&=	\mathbb{P}_{\underline{\theta}}\left(U \leq u, t-h < T \leq t\right)\\
&=\mathbb{P}_{\underline{\theta}}\left(\ln Z_2-H_S(\underline{\theta}) \leq u, t-h < \frac{Z_1}{Z_2} \leq t\right)\\
&=\mathbb{P}_{\underline{\theta}}\left(X_1 \geq X_2, \ln(X_1)-\ln(\theta_1) \leq u, t-h < \frac{X_2}{X_1} \leq t\right)\\
&~~\hspace*{2cm}+ \mathbb{P}_{\underline{\theta}}\left(X_2 > X_1, \ln(X_2)-\ln \theta_2 \leq u, t-h < \frac{X_1}{X_2} \leq t\right)\\
&=\mathbb{P}_{\underline{\theta}}\left(\ln \left(Y_1\right) \leq u, \frac{(t-h)}{\mu} < \frac{Y_2}{Y_1} \leq \frac{t}{\mu}\right)+ \mathbb{P}_{\underline{\mu}}\left(\ln\left(Y_2\right) \leq u, (t-h)\mu < \frac{Y_1}{Y_2} \leq t \mu\right)\\
&=\mathbb{P}_{\underline{\theta}}\left( Y_1 \leq e^u, \frac{(t-h)Y_1}{\mu} < Y_2 \leq \frac{t Y_1}{\mu} \right)+ \mathbb{P}_{\underline{\mu}}\left( Y_2 \leq e^u, (t-h)\mu Y_2 < Y_1 \leq t \mu Y_2\right),
\end{align*}
where, $Y_i=\frac{X_i}{\theta_i},~i=1,2$, so that $Y_1 $ and $Y_2$ are iid $Gamma(\alpha,1)$. Let $G_{\alpha}$ and $g_{\alpha}$ denote the df and pdf, respectively, of $Y_1$. Then, 
\begin{align}
N_1(h|u,t,\underline{\theta})&=\displaystyle{\int_{0}^{e^u}} \left[G_{\alpha}\left(\frac{ty}{\mu}\right)-G_{\alpha}\left(\frac{(t-h)y}{\mu}  \right)\right] f_{\alpha}(y) dy \nonumber\\
&~~\hspace*{2cm}+ \displaystyle{\int_{0}^{e^u}} \left[G_{\alpha}\left(t \mu y\right)-G_{\alpha}\left((t-h)\mu y \right)\right] g_{\alpha}(y) dy, ~ -\infty < u < \infty, h>0 \nonumber\\
\Rightarrow \lim\limits_{h \downarrow 0} \frac{N_1(h|u,t,\underline{\theta})}{h}&= \displaystyle{\int_{0}^{e^u}} \frac{y}{\mu} g_{\alpha}\left(\frac{ty}{\mu}\right) g_{\alpha}\left(y\right) dy +  \displaystyle{\int_{0}^{e^u}} y\theta g_{\alpha}\left(ty\mu\right) g_{\alpha}\left(y\right) dy,~ -\infty < u < \infty.
\end{align}
Using $(2.6)$ and $(2.7)$, for any fixed $t \in (0,1]$, the conditional pdf of $U$, given $T=t$, is given by
\begin{align*}
f_{1,\underline{\theta}}(u|t)&=\frac{\frac{e^{2u}}{\mu} g_{\alpha}\left(\frac{te^{u}}{\mu}\right) g_{\alpha}\left(e^{u}\right)  +  \mu e^{2u} g_{\alpha}\left(\mu t e^{u}\right) g_{\alpha}\left(e^{u}\right)}{h_{\underline{\theta}}(t)}\\
&=\frac{\frac{1}{\mu^{\alpha}}e^{2\alpha u}e^{-\left(1+\frac{t}{\mu}\right)e^{u}}+\mu^{\alpha} e^{2\alpha u}e^{-\left(1+t\mu\right)e^{u}}}{\Gamma(2\alpha)\left[\frac{1}{\mu^{\alpha}\left(1+\frac{t}{\mu}\right)^{2\alpha}}+\frac{\mu^{\alpha}}{\left(1+t \mu\right)^{2\alpha}}\right]},~ -\infty < u < \infty, t \in (0,1], \mu \geq 1.
\end{align*}
\end{proof}
\begin{lemma}
	For any fixed $\alpha >0 $, define
	\begin{align}
	k_{t}(\mu)&=\frac{(1+t \mu)^{2 \alpha} \ln\left(1+\frac{t}{\mu}\right)+(\mu+t)^{2\alpha}\ln\left(1+t \mu\right)}{(1+t \mu)^{2\alpha}+(\mu+t)^{2\alpha}}~;~ 0 < t \leq 1,~ \mu \geq 1.
	\end{align}
	Then, $ \inf\limits_{\mu\geq  1} k_{t}(\mu)=\ln(1+t)$.
	
\end{lemma}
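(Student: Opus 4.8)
The plan is to show that the infimum is actually a minimum, attained at the left endpoint $\mu=1$. I would first evaluate $k_{t}(\mu)$ at $\mu=1$: there the two logarithmic arguments $1+\tfrac{t}{\mu}$ and $1+t\mu$ coincide (both equal $1+t$), as do the two coefficients $(1+t\mu)^{2\alpha}$ and $(\mu+t)^{2\alpha}$, so that $k_{t}(1)=\ln(1+t)$. It therefore suffices to prove that $k_{t}(\mu)\ge \ln(1+t)$ for all $\mu\ge 1$.

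The central idea is to read $k_{t}(\mu)$ as a convex combination of $p:=\ln\!\left(1+\tfrac{t}{\mu}\right)$ and $q:=\ln(1+t\mu)$, with weight $w_1=\frac{(1+t\mu)^{2\alpha}}{(1+t\mu)^{2\alpha}+(\mu+t)^{2\alpha}}$ on $p$ and weight $w_2=1-w_1$ on $q$. I would then record three elementary facts valid for $0<t\le 1$ and $\mu\ge 1$: (a) $p+q\ge 2\ln(1+t)$, equivalently $\left(1+\tfrac{t}{\mu}\right)(1+t\mu)\ge (1+t)^2$, which follows because the difference of the two sides equals $t\,\tfrac{(\mu-1)^2}{\mu}\ge 0$; (b) $q\ge p$, since $t\mu\ge \tfrac{t}{\mu}$ whenever $\mu\ge 1$; and (c) $w_2\ge w_1$ (hence $w_2\ge\tfrac12$), which reduces to $(\mu+t)^{2\alpha}\ge (1+t\mu)^{2\alpha}$, i.e. $(\mu-1)(1-t)\ge 0$ --- and here is the one place the hypothesis $t\le 1$ is genuinely used.

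Combining (b) and (c) through the identity
$$k_{t}(\mu)-\frac{p+q}{2}=\left(w_2-\tfrac12\right)(q-p)\ge 0,$$
shows that placing the larger weight $w_2$ on the larger value $q$ makes the weighted mean dominate the simple mean; invoking (a) then gives $k_{t}(\mu)\ge \frac{p+q}{2}\ge \ln(1+t)$. Since equality holds throughout at $\mu=1$, the infimum equals $\ln(1+t)$, as claimed.

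I do not expect a serious obstacle, as the whole argument is structural once one notices the convex-combination form of $k_{t}(\mu)$. The only point demanding care is checking that facts (a)--(c) hold simultaneously over the entire region $0<t\le 1$, $\mu\ge 1$, and keeping track of why the constraint $t\le 1$ (rather than merely $t>0$) is needed: it enters solely through the weight comparison in (c), which governs whether the extra mass lands on $q$ or on $p$.
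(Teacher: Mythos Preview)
Your argument is correct and rests on the same two elementary facts the paper uses, namely that $(\mu+t)^{2\alpha}\ge (1+t\mu)^{2\alpha}$ (your (c)) and that $\ln(1+t\mu)+\ln(1+t/\mu)\ge 2\ln(1+t)$ (your (a)); fact (b) is implicit in the paper as well. The organizing idea, however, is different: the paper rearranges $k_t(\mu)\ge\ln(1+t)$ into the ratio inequality
\[
\frac{\ln(1+t\mu)-\ln(1+t)}{\ln(1+t)-\ln(1+t/\mu)}\;\ge\;\left(\frac{1+t\mu}{\mu+t}\right)^{2\alpha},
\]
notes the right side is at most $1$, and then proves the left side is at least $1$ via a monotonicity-in-$\mu$ argument. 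Your convex-combination reading, together with the one-line identity $w_1p+w_2q-\tfrac{p+q}{2}=(w_2-\tfrac12)(q-p)$, packages the same information more transparently and makes clear why $\alpha$ plays no role once (c) is known. Your direct computation $(1+t/\mu)(1+t\mu)-(1+t)^2=t(\mu-1)^2/\mu$ for (a) is also shorter than the paper's monotonicity step. Both routes identify $t\le 1$ as entering only through the weight comparison $(\mu-1)(1-t)\ge 0$.
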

\begin{proof}
	Note that, for any fixed $t \in (0,1]$ and $\alpha >0$, $k_t(1)=\ln (1+t)$ and $\lim\limits_{\mu \to \infty} k_t(\mu)=\infty.$ Thus, it suffices to show that
	$k_t(\mu) \geq \ln (1+t), \forall t \in (0,1], \mu \geq 1 ~\text{and}~ \alpha >0 $,
	or equivalently that 
\begin{align}
~~~~~~~~\hspace{4mm} \frac{\ln (1+t\mu)-\ln (1+t)}{\ln (1+t)-\ln (1+\frac{t}{\mu})} \geq \left(\frac{1+t \mu}{\mu +t}\right)^{2\alpha}, \forall t \in (0,1], \mu \geq 1 ~ \text{and}~ \alpha > 0.
\end{align}
Since, for any $t \in (0,1]$ and $\mu \geq 1$, $\frac{1+t \mu}{\mu + t} <1$, to establish $(2.9)$, it is enough to show that 
\begin{align*}
\frac{\ln (1+t\mu)-\ln (1+t)}{\ln (1+t)-\ln (1+\frac{t}{\mu})} &\geq 1, \forall t \in (0,1], \mu \geq 1 \\
\iff  \ln (1+t\mu) + \ln (\mu+t)-\ln \mu &\geq 2 \ln (1+t), \forall t \in (0,1],~ \mu \geq 1.
\end{align*}
It is easy to verify that the expression on left hand side above is an increasing function of $\mu$ on $[1, \infty)$, and consequently, the above inequality holds.
\end{proof}
For estimating $H_S(\underline{\theta})$, the risk function of any scale and permutation equivariant estimator $\delta_{\Phi}(\underline{X})=\ln Z_2-\Phi(T)$, is given by,
\begin{equation*}
R_{\mu}(\delta_{\Phi})=\mathbb{E}_{\underline{\theta}}\left[R_1(\mu,\Phi(t))\right],~ \mu \geq 1,
\end{equation*}
where, for any fixed $t \in (0,1]$, 
\begin{equation}
R_1(\mu,\Phi(t))=\mathbb{E}_{\underline{\theta}}\left[\left(\ln Z_2-\Phi(T)-H_S(\underline{\theta})\right)^2\Big|T=t\right],~ \mu \geq 1
\end{equation}
is the conditional risk of $\delta_{\Phi}(\cdot)$ given $T=t$. For fixed $\mu \geq 1$ and $t \in (0,1]$, the conditional risk in $(2.10)$ is minimized for the choice $\Phi(t)=\Phi_{\mu}(t)$, where
\begin{align}
\Phi_{\mu}(t)&=\mathbb{E}_{\underline{\theta}}\left[\left(\ln Z_2-H_S(\underline{\theta})\right)\Big|T=t\right]\\
&=\mathbb{E}_{\underline{\theta}}\left[U|T=t\right] \nonumber \\
&= \displaystyle{\int_{-\infty}^{\infty}} u f_{1, \underline{\theta}}(u|t) du \nonumber \\
&=\frac{N}{\Gamma(2\alpha)\left[\frac{1}{\mu^{\alpha}\left(1+\frac{t}{\mu}\right)^{2\alpha}}+\frac{\mu^{\alpha}}{\left(1+t \mu\right)^{2\alpha}}\right]},
\end{align}
using Lemma $2.3$, where
\begin{align*}
N &=\frac{1}{\mu^{\alpha}}\displaystyle{\int_{-\infty}^{\infty}} u e^{2\alpha u} e^{-\left(1+\frac{t}{\mu}\right)e^{u}} du+\mu^{\alpha}\displaystyle{\int_{-\infty}^{\infty}} u e^{2\alpha u} e^{-\left(1+t\mu\right)e^{u}} du\\
&=\frac{1}{\mu^{\alpha}} \displaystyle{\int_{0}^{\infty}} \ln(u) u^{2\alpha-1}e^{-\left(1+\frac{t}{\mu}\right)u}du+\mu^{\alpha}\displaystyle{\int_{0}^{\infty}} \ln(u) u^{2\alpha-1}e^{-\left(1+t\mu\right)u}du\\
&= \frac{\Gamma(2\alpha)}{\mu^{\alpha}\left(1+\frac{t}{\mu}\right)^{2\alpha}} \left[\psi(2\alpha)-\ln\left(1+\frac{t}{\mu}\right)\right]+\frac{\mu^{\alpha}\Gamma(2\alpha)}{\left(1+t\mu\right)^{2\alpha}} \left[\psi(2\alpha)-\ln\left(1+t\mu\right)\right]. 
\end{align*}
Therefore, for any fixed $t \in (0,1]$,
\begin{align*}
\Phi_{\mu}(t)&=\psi(2\alpha)-k_t(\mu),~ \mu \geq 1,
\end{align*}
where, $k_{t}(\mu)$ is defined by (2.8). Further, using Lemma $2.4$, we get
\begin{align}
\Phi_{*}(t)&=\sup\limits_{\mu \geq  1} \Phi_{t}(\mu)=\psi(2 \alpha)-\ln(1+t),~0<t \leq 1.
\end{align}
We now use the idea of \cite{brewster1974improving} to obtain a general result that, under certain conditions, is helpful in finding estimators dominating a scale and permutation equivariant estimator.
\begin{theorem}
For a given function $\Phi:(0,1] \to \mathbb{R}$, let  $\delta_{\Phi}\left(\underline{X}\right)=\ln Z_2-\Phi\left(T\right) $ be a scale and permutation equivariant estimator of $H_S(\underline{\theta})$. Suppose that
$\mathbb{P}_{\underline{\theta}}\left[\Bigg\{T:\Phi(T) > \Phi_{*}(T) \Bigg\} \right] >0,~ \text{for some}~ \underline{\theta} \in \Theta,$ where $\Phi_{*}(\cdot)$ is given by $(2.13).$
Then, the estimator $\delta_{\Phi}(\cdot)$ is dominated by $\delta_{\Phi_I}\left(\underline{X}\right)=\ln Z_2-\Phi_I(T)$, where,
\begin{equation}
\Phi_I\left(\underline{t}\right)=\begin{cases}
\psi(2\alpha)-\ln(1+t), & \text{ if }~ \Phi(t) > \psi(2\alpha)- \ln(1+t),\\
\Phi(t), & \text{otherwise}.
\end{cases}
\end{equation}
\end{theorem}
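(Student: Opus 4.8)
The plan is to exploit that, for each fixed orbit value $t \in (0,1]$ and each fixed $\mu \geq 1$, the conditional risk in $(2.10)$ is a quadratic function of the action. Writing $a=\Phi(t)$ and recalling $U=\ln Z_2-H_S(\underline{\theta})$, one has
$$R_1(\mu,a)=\mathbb{E}_{\underline{\theta}}\left[(U-a)^2 \mid T=t\right]=\operatorname{Var}_{\underline{\theta}}\!\left(U \mid T=t\right)+\left(a-\Phi_{\mu}(t)\right)^2,$$
since $\Phi_{\mu}(t)=\mathbb{E}_{\underline{\theta}}[U \mid T=t]$ by $(2.11)$. Thus, for fixed $\mu$ and $t$, the conditional risk depends on the action only through its squared distance to $\Phi_{\mu}(t)$, and the whole argument reduces to comparing distances.

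First I would fix a value $t$ lying in the improvement region $\{t:\Phi(t)>\Phi_{*}(t)\}$, so that $\Phi_I(t)=\Phi_{*}(t)$ by $(2.14)$. The central observation is the ordering $\Phi_{\mu}(t)\leq \Phi_{*}(t)=\Phi_I(t)<\Phi(t)$, valid for \emph{every} $\mu\geq 1$, where the first inequality is exactly the definition $\Phi_{*}(t)=\sup_{\mu\geq 1}\Phi_{\mu}(t)$ recorded in $(2.13)$. Consequently both $\Phi_I(t)-\Phi_{\mu}(t)$ and $\Phi(t)-\Phi_{\mu}(t)$ are nonnegative, with the former strictly smaller, whence
$$\left(\Phi_I(t)-\Phi_{\mu}(t)\right)^2<\left(\Phi(t)-\Phi_{\mu}(t)\right)^2,$$
so that $R_1(\mu,\Phi_I(t))<R_1(\mu,\Phi(t))$ for every $\mu\geq 1$. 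For $t$ outside the improvement region, $\Phi_I(t)=\Phi(t)$ gives equality. This yields the orbit-wise inequality $R_1(\mu,\Phi_I(t))\leq R_1(\mu,\Phi(t))$ for all $t\in(0,1]$ and all $\mu\geq 1$, strict precisely on the improvement region and uniformly in $\mu$ there.

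Next I would integrate over the distribution of $T$. Since $R_{\mu}(\delta_{\Phi})=\mathbb{E}_{\underline{\theta}}[R_1(\mu,\Phi(T))]$, the orbit-wise inequality immediately gives $R_{\mu}(\delta_{\Phi_I})\leq R_{\mu}(\delta_{\Phi})$ for all $\mu\geq 1$. By hypothesis there is a $\underline{\theta}$ (equivalently a value of $\mu$) for which the improvement region $\{T:\Phi(T)>\Phi_{*}(T)\}$ has positive probability; at that $\mu$ the integrand is nonpositive everywhere and strictly negative on a set of positive probability, so it integrates to a strict inequality $R_{\mu}(\delta_{\Phi_I})<R_{\mu}(\delta_{\Phi})$. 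Together these establish that $\delta_{\Phi_I}$ dominates $\delta_{\Phi}$, following the orbit-by-orbit method of \cite{brewster1974improving}.

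The only genuinely delicate ingredient is the uniform ordering $\Phi_{\mu}(t)\leq \Phi_{*}(t)$ for all $\mu\geq 1$: it is precisely what makes truncation at $\Phi_{*}$ simultaneously beneficial across all parameter values, and it is supplied by the computation $\Phi_{\mu}(t)=\psi(2\alpha)-k_t(\mu)$ together with $\inf_{\mu\geq 1}k_t(\mu)=\ln(1+t)$ from Lemma 2.4. The remaining steps are routine once the quadratic decomposition is in place, the sole technical proviso being finiteness of $\operatorname{Var}_{\underline{\theta}}(U \mid T=t)$, which holds since $U$ has finite conditional log-moments under the gamma model.
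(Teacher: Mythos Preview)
Your proof is correct and follows essentially the same orbit-by-orbit Brewster--Zidek argument as the paper. The paper writes the conditional risk difference directly as $D_{\underline{\theta}}(t)=(\Phi(t)-\Phi_I(t))[\Phi(t)+\Phi_I(t)-2\Phi_{\mu}(t)]$ and bounds it below by $(\Phi(t)-\Phi_{*}(t))^2$, whereas you reach the same inequality via the bias--variance decomposition and a squared-distance comparison; these are algebraically equivalent rewritings of the same step, both hinging on $\Phi_{\mu}(t)\leq\Phi_{*}(t)$ from Lemma~2.4.
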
 
\begin{proof}
Consider the risk difference,
\begin{align*}
R\left(\underline{\theta}, \delta_{\Phi}\right)-R\left(\underline{\theta}, \delta_{\Phi_I}\right)&=\mathbb{E}_{\underline{\theta}}\left(D_{\underline{\theta}}(T)\right),
\end{align*}
where, for $t \in (0,1]$,
\begin{align*}
D_{\underline{\theta}}(T)
&=\left(\Phi(t)-\Phi_{I}(t)\right)\left[\Phi(t)+\Phi_{I}(t)-2\Phi_{\underline{\mu}}(t)\right], \mu \geq 1;
\end{align*}
here, $\Phi_{\mu}(t)$ is defined by $(2.11)$. Let us fix $t \in (0,1]$. Clearly, if $\Phi(t) \leq  \Phi_{*}(t)$ (so that $\Phi_{I}(t)=\Phi(t)$), then $D_{\underline{\theta}}(T)=0.$ Also, if $\Phi(t) > \Phi_{*}(t)$ (so that $\Phi_{I}(t)=\Phi_{*}(t)$), then
\begin{align*}
D_{\underline{\theta}}(t)
&=\left(\Phi(t)-\Phi_{*}(t)\right)\left[\Phi(t)+\Phi_{*}(t)-2\Phi_{\mu}(t)\right]\\
& > \left(\Phi(t)-\Phi_{*}(t)\right)^2\\
&\geq0,~ \forall \mu \geq 1.
\end{align*}
Since, $\mathbb{P}_{\underline{\theta}}\left[\Bigg\{T:\Phi(T) > \Phi_{*}(T) \Bigg\} \right] > 0,~ \text{for some}~ \underline{\theta} \in \Theta~ ,$  we conclude that $R\left(\underline{\theta}, \delta_{\Phi}\right)-R\left(\underline{\theta}, \delta_{\Phi_I}\right)\geq 0,~ \forall \underline{\theta} \in \Theta,$ with strictly inequality for some $\underline{\theta} \in \Theta$. Hence, the result follows.
\end{proof} 

Now we will discuss applications of the above theorem in finding shrinkage type improvements over various naive estimators belonging to class $\mathcal{K}_1$. Let $c > \psi(2\alpha)-\ln 2$. Then
$$\mathbb{P}_{\underline{\theta}}\left(c > \Phi_{*}(T)\right)=\mathbb{P}_{\underline{\theta}}\left(\ln (1+T) > \psi(2\alpha)-c \right) >0,~\forall~ \underline{\theta} \in \Theta.$$
Consequently, any naive estimator $\delta_{c}(\underline{X})= \ln Z_2-c$, with $c>\psi(2\alpha)-\ln 2$, is inadmissible for estimating $H_S(\underline{\theta})$ and is dominated by the shrinkage estimator

 \begin{align}
\delta^{(S)}_{c}(\underline{X})
&= \begin{cases}
\ln Z_2 -c,  &\text{if, $\frac{Z_1}{Z_2} < e^{\psi(2 \alpha)-c}-1$ }\\
\ln(X_1 +X_2)-\psi(2\alpha), ~~ &  \text{if, $\frac{Z_1}{Z_2} \geq  e^{\psi(2 \alpha)-c}-1$}
\end{cases}
.
\end{align}
From Lemma 2.1 (ii), we have, for any $\alpha >0$, $\psi(2\alpha)< \ln (2\alpha)$, i.e., $\ln (\alpha+1)> \ln \alpha >\psi(2\alpha)-\ln 2$. Now, from the above discussion, it follows that natural estimators $\widehat{H}^{(1)}_{N_1}(\underline{X})=\delta_{\ln \alpha}(\underline{X})= \ln Z_2 -\ln \alpha$
and $\widehat{H}^{(1)}_{N_2}(\underline{X})=\delta_{\ln (\alpha+1)}(\underline{X})= \ln Z_2 -\ln (\alpha+1)$ are inadmissible for estimating $H_S(\underline{\theta})$ and are dominated by shrinkage estimators

 \begin{align}
\delta^{(S)}_{\ln\alpha}(\underline{X})
&= \begin{cases}
\ln Z_2 -\ln \alpha,  &\text{if, $\frac{Z_1}{Z_2} < \frac{e^{\psi(2 \alpha)}}{\alpha}-1$ }\\
\ln(X_1 +X_2)-\psi(2\alpha), ~~ &  \text{if, $\frac{Z_1}{Z_2} \geq   \frac{e^{\psi(2 \alpha)}}{\alpha}-1$}
\end{cases}
,
\end{align}
and

\begin{align}
\delta^{(S)}_{\ln(\alpha+1)}(\underline{X})
&= \begin{cases}
\ln Z_2 -\ln (\alpha+1),  &\text{if, $\frac{Z_1}{Z_2} < \frac{e^{\psi(2 \alpha)}}{\alpha+1}-1$ }\\
\ln(X_1 +X_2)-\psi(2\alpha), ~~ &  \text{if, $\frac{Z_1}{Z_2} \geq   \frac{e^{\psi(2 \alpha)}}{\alpha+1}-1$}
\end{cases}
,
\end{align}
respectively. A summary of the above discussion, in conjunction with Theorem 2.1, Lemma 2.1 (iii) and Theorem 2.2, is provided in the form of following theorem.
\begin{theorem}
	\begin{itemize}
		\item[(a)] The naive estimators $\{\delta_{{c}}: c \in (-\infty, c_1(\alpha)) \cup (\psi(2\alpha)-\ln 2,\infty)\}$ are inadmissible for estimating $H_S(\underline{\theta})$ under the squared error loss function \eqref{S1.E2}. For any $c \in \left(-\infty, c_1(\alpha)\right)$ the naive estimator $\delta_{{c}}(\underline{X})=\ln Z_2-c$ is dominated by the natural estimator  $\delta_{{c_1(\alpha)}}(\underline{X})=\ln Z_2-c_1(\alpha)$; for any $c \in \left(\psi(2\alpha)-\ln 2,c_2(\alpha)\right)$, the naive estimator $\delta_{{c}}(\underline{X})$ is dominated by the shrinkage estimator, defined by $(2.15)$ and for any $c \in [c_2(\alpha), \infty)$ the naive estimator $\delta_{{c}}$ is dominated by the shrinkage estimator $\delta^{(S)}_{{c_2(\alpha)}}(\underline{X})$, as defined in $(2.15)$.
		
		\item[(b)] The natural estimator $\widehat{H}^{(1)}_{N_1}(\underline{X})=\delta_{\ln \alpha}(\underline{X})= \ln Z_2 -\ln \alpha$ is inadmissible for estimating $H_S(\underline{\theta})$ and is dominated by the shrinkage estimator $\delta^{(S)}_{{\ln \alpha}}(\underline{X})$, defined by $(2.16)$.
		
		\item[(c)] The natural estimator $\widehat{H}^{(1)}_{N_2}(\underline{X})=\delta_{\ln (\alpha+1)}(\underline{X})= \ln Z_2 -\ln (\alpha+1)$ is inadmissible for estimating $H_S(\underline{\theta})$ and is dominated by the shrinkage estimator $\delta^{(S)}_{{\ln (\alpha+1)}}(\underline{X})$, defined by $(2.17)$.
	\end{itemize}
\end{theorem}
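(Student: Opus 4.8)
The plan is to obtain Theorem 2.3 purely by assembling results already established, with no new risk or integral computation. The three ingredients are: (i) the strict monotonicity of the naive risk, namely that for each fixed $\mu \ge 1$ the map $c \mapsto R_{\mu}(\delta_{c})$ is strictly decreasing on $(-\infty, c_1(\alpha)]$ and strictly increasing on $[c_2(\alpha), \infty)$, as shown in the proof of Theorem 2.1; (ii) the ordering $c_1(\alpha) < \psi(2\alpha) - \ln 2 < c_2(\alpha) < \psi(2\alpha)$ of Lemma 2.1(iii); and (iii) the shrinkage-domination fact, drawn from Theorem 2.2 in the discussion preceding the statement, that whenever $c > \psi(2\alpha) - \ln 2$ the naive estimator $\delta_{c}$ is dominated by the estimator $\delta^{(S)}_{c}$ of $(2.15)$. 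The whole proof then amounts to selecting, in each range of $c$, the correct dominating estimator, occasionally chaining two domination relations.

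For part (a) I would partition the inadmissibility range into the three advertised pieces. On $(-\infty, c_1(\alpha))$, ingredient (i) gives $R_{\mu}(\delta_{c_1(\alpha)}) < R_{\mu}(\delta_{c})$ for every $\mu \ge 1$, so $\delta_{c}$ is dominated by the natural estimator $\widehat{H}^{(1)}_{N_3}(\underline{X}) = \delta_{c_1(\alpha)}(\underline{X})$. On $(\psi(2\alpha) - \ln 2,\, c_2(\alpha))$ the hypothesis of ingredient (iii) holds verbatim, so $\delta_{c}$ is dominated by $\delta^{(S)}_{c}$ from $(2.15)$. For $c \in [c_2(\alpha), \infty)$ I would treat $c = c_2(\alpha)$ directly: since $c_2(\alpha) > \psi(2\alpha) - \ln 2$ by ingredient (ii), ingredient (iii) applies and $\delta_{c_2(\alpha)}$ is dominated by $\delta^{(S)}_{c_2(\alpha)}$; for $c > c_2(\alpha)$ I would chain, using ingredient (i) to get $R_{\mu}(\delta_{c_2(\alpha)}) < R_{\mu}(\delta_{c})$ for every $\mu \ge 1$ and composing this with $R_{\mu}(\delta^{(S)}_{c_2(\alpha)}) \le R_{\mu}(\delta_{c_2(\alpha)})$ to conclude $R_{\mu}(\delta^{(S)}_{c_2(\alpha)}) < R_{\mu}(\delta_{c})$ for all $\mu$, i.e., domination of $\delta_{c}$ by $\delta^{(S)}_{c_2(\alpha)}$.

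For parts (b) and (c) the first task is to certify that the relevant constants fall in the shrinkage range. Evaluating Lemma 2.1(ii) at the argument $2\alpha$ gives $\psi(2\alpha) < \ln(2\alpha) = \ln 2 + \ln \alpha$, whence $\ln \alpha > \psi(2\alpha) - \ln 2$ and therefore $\ln(\alpha+1) > \ln \alpha > \psi(2\alpha) - \ln 2$. Thus both $c = \ln \alpha$ and $c = \ln(\alpha+1)$ satisfy the hypothesis of ingredient (iii), so $\widehat{H}^{(1)}_{N_1}(\underline{X}) = \delta_{\ln \alpha}(\underline{X})$ is dominated by $\delta^{(S)}_{\ln \alpha}(\underline{X})$ and $\widehat{H}^{(1)}_{N_2}(\underline{X}) = \delta_{\ln(\alpha+1)}(\underline{X})$ by $\delta^{(S)}_{\ln(\alpha+1)}(\underline{X})$. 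I would then note that substituting $c = \ln \alpha$ (respectively $c = \ln(\alpha+1)$) into $(2.15)$ turns the cut-off $e^{\psi(2\alpha)-c} - 1$ into $\frac{e^{\psi(2\alpha)}}{\alpha} - 1$ (respectively $\frac{e^{\psi(2\alpha)}}{\alpha+1} - 1$), so these dominating estimators are precisely $(2.16)$ and $(2.17)$.

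The one step I would verify with care---the only place where anything beyond citation happens---is the identification underlying ingredient (iii), that $(2.15)$ is genuinely the estimator $\delta_{\Phi_I}$ produced by Theorem 2.2 for the constant choice $\Phi \equiv c$. On $\{T \ge e^{\psi(2\alpha)-c} - 1\}$ the truncated rule reads $\ln Z_2 - \Phi_{*}(T) = \ln Z_2 - \psi(2\alpha) + \ln(1 + \frac{Z_1}{Z_2})$, and this must collapse to $\ln(X_1 + X_2) - \psi(2\alpha)$; the collapse is the algebraic identity $\ln Z_2 + \ln(1 + \frac{Z_1}{Z_2}) = \ln(Z_1 + Z_2) = \ln(X_1 + X_2)$, together with the observation that $c > \psi(2\alpha) - \ln 2$ forces the cut-off $e^{\psi(2\alpha)-c} - 1$ to lie below $1$, so that $\{T \ge e^{\psi(2\alpha)-c} - 1\} \cap (0,1]$ carries positive probability and the positivity hypothesis of Theorem 2.2 is met. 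Once this identification and the strict ordering of Lemma 2.1(iii) are in hand, every clause of the theorem follows with no further estimation.
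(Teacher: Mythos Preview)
Your proposal is correct and matches the paper's approach exactly: the paper presents Theorem 2.3 as a summary obtained ``in conjunction with Theorem 2.1, Lemma 2.1 (iii) and Theorem 2.2,'' which are precisely your three ingredients (i)--(iii), and the preceding discussion in the paper carries out the same verification that $\ln(\alpha+1)>\ln\alpha>\psi(2\alpha)-\ln 2$ via Lemma 2.1(ii) and the same identification of $(2.15)$ with the output of Theorem 2.2. Your explicit chaining argument for $c\in[c_2(\alpha),\infty)$ and your check that the truncated rule collapses to $\ln(X_1+X_2)-\psi(2\alpha)$ are details the paper leaves implicit, but the overall route is identical.
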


\begin{rem}
	\begin{itemize}
		\item[(i)] The global admissibility of naive estimators $\{\delta_{{c}}:c \in [c_1(\alpha),\psi(2\alpha)-\ln 2]\}$ is unresolved. We believe that these estimators are globally admissible but we have not been able to prove this. This seems to be an interesting problem for future research.
		\item[(ii)] A natural question that arises is whether an unbiased estimator of $H_S(\underline{\theta})$ exists $?$ We have tried to address this question and could not succeed in resolving it. Through our experience with analysis carried out to resolve the question, we conjecture that an unbiased estimator for the selected entropy $H_S(\underline{\theta})$ does not exists. However, we have not been able to settle this question and it remains an open problem, that may also be considered in our future research.
	\end{itemize}

\end{rem}


\section{Estimation of Entropy of the Better Selected Population}
We call the population associated with $\min\{H(\theta_1), H(\theta_2)\}$, as the ``better" population. A natural selection rule for selecting the better population is to choose the population corresponding to $Z_1=\min\{X_1,X_2\}$. Let $M \equiv M(\underline{X})$ denotes the index of the better selected population, $i.e., M = i, \text{if},  X_i = Z_1, i = 1, 2.$ Following selection of the better population, our goal is to estimate the Shannon entropy of the selected better population, which is equivalent to estimation of
\begin{equation}
\label{S5.E1}
H_M(\underline{\theta})=\begin{cases}
\ln \theta_1, & \text{if ${X}_1  \leq {X}_2$ }\\
\ln \theta_2, & \text{if ${X}_1 > {X}_2$}
\end{cases}
=\ln \theta_1I(X_1 \leq X_2)+\ln \theta_2I(X_1 > X_2).
\end{equation}
In this section, we consider estimation of $H_M(\underline{\theta})$ under the  squared error loss function,
\begin{equation}
\label{S3.E2}
L(\underline{\theta},a) = \left(a-H_M(\underline{\theta})\right)^2,~
\underline{\theta} \in \Theta= (0,\infty) \times (0,\infty), ~ a \in \mathcal{A} = \mathbb{R}.
\end{equation}
For the goal of estimating  $H_M(\underline{\theta})$, any scale and permutation equivariant estimator is of the form,
\begin{equation}
d_{\phi}\left(\underline{X}\right)=\ln Z_1-\phi\left(V\right),
\end{equation}
with $V=\frac{Z_2}{Z_1}$ and, for some real valued function $\phi(\cdot)$ defined on $[1,\infty)$. We define $\mathcal{M}_2$  the class of all scale and permutation equivariant estimators of $H_M(\underline{\theta})$, given by (3.3). We observe that, for any estimator $\delta_{\phi}$ $\in$ $\mathcal{M}_2$, the risk function $R(\underline{\theta}, \delta)=\mathbb{E}_{\underline{\theta}}\left( \left(\delta-H_M(\underline{\theta})\right)^2\right)$  depends on $\underline{\theta}$ through $\theta=\frac{\mu_1}{\mu_2}$, where $\mu_1=\min\{\mu_1,\mu_2\}$, $\mu_2=\max\{\mu_1,\mu_2\}$. Clearly, $\theta \in (0,1]$. Therefore, for notational simplicity, we denote $R(\underline{\theta}, \delta)$ by $R_{\theta}(\delta)$.\\
As in Section 1, three naive estimators of  $H_M(\underline{\theta})$ based on MLEs and best equivariant estimators of $\theta_1$ and $\theta_2$ (or $\ln \theta_1$ and $\ln \theta_2$) are $d_{N_1}(\underline{X})=\ln Z_1-\ln(\alpha)$, $d_{N_2}(\underline{X})=\ln Z_1-\ln(\alpha+1)$ and $d_{N_3}(\underline{X})=\ln Z_1-\psi(\alpha)$.
Motivated by form of the estimators $d_{N_1}(\underline{X})$, $d_{N_2}(\underline{X})$ and $d_{N_3}(\underline{X})$, we consider a subclass $\mathcal{M}_1=\left\{d_{c}(\cdot):c \in \mathbb{R}\right\}$ of estimators,  where, $d_{c}(\underline{X})=\ln Z_1-c,c \in \mathbb{R}$. We call class $\mathcal{M}_1$, the class of linear, scale and permutation equivariant estimators. Clearly $\mathcal{M}_1 \subseteq \mathcal{M}_2.$ \par 
We will first provide a class of generalized Bayes estimators of $H_M(\underline{\theta})$ that is contained in class $\mathcal{M}_1$. We obtain a result characterizing admissible and inadmissible estimators within the subclass $\mathcal{M}_1=\left\{d_{c}(\cdot):d_{c}(\underline{X})=\ln Z_1-c,c \in \mathbb{R}\right\}$ under the mean squared error criterion. Further, we also derive a sufficient condition for inadmissibility of an arbitrary scale and permutation equivariant estimator of $H_M(\underline{\theta})$ under the criterion of mean squared error. As a consequence of this general result, we obtain shrinkage estimators improving upon various naive estimators belonging to the class $\mathcal{M}_1$. \par 

Utilizing the arguments used in obtaining the class of generalized Bayes estimators $\mathcal{K}_{GB}$ of $H_S(\underline{\theta})$, we obtain the class $\mathcal{M}_{GB}=\left\{d_{\beta}^{GB}:\beta >- \alpha\right\}$, with $d_{\beta}^{GB}(\underline{X})=\ln Z_1-\psi(\alpha+\beta)$ as the class of generalized Bayes estimators of $H_M(\underline{\theta})$ under the squared error loss function \eqref{S3.E2}, and improper priors $(2.1)$. Remarkably, the naive estimator $d_{\psi(\alpha)}(\underline{X})=d_{0}^{GB}(\underline{X})=\ln Z_1-\psi(\alpha)$ (analogue of the best scale equivariant estimators of $\ln \theta_1$ and $\ln \theta_2$) is the generalized Bayes estimator with respect to Jeffrey's non-informative prior density $\Pi_{0}({\underline{\theta}})=\frac{1}{\theta_1 \theta_2}, ~\underline{\theta} \in \Theta$. \par 

\noindent The following lemma is useful in proving Theorem 3.2, reported in the sequel.
\begin{lemma}
Let $U_1=\ln Z_1-H_M(\underline{\theta})$. Then, for any $\underline{\theta} \in \Theta$,
\begin{align}
\mathbb{E}_{\underline{\theta}}\left(U_1 \right)&= \displaystyle{\int_{0}^{\infty}}\ln(z) \left[1-G_{\alpha}\left(\theta z\right)\right]g_{\alpha}(z)dz+ \displaystyle{\int_{0}^{\infty}}\ln(z)\left[1-G_{\alpha}\left(\frac{z}{\theta}\right)\right]g_{\alpha}(z)dz,~ 0< \theta \leq 1.
\end{align}
where $G_{\alpha}\left(\cdot\right) $ and $g_{\alpha}\left(\cdot\right) $ denote the df and pdf, respectively, of $Gamma(\alpha,1)$ distribution.
\end{lemma}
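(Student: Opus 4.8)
The plan is to mirror the proof of Lemma 2.2, using permutation symmetry together with the scale reduction $Y_i = X_i/\theta_i$. First I would observe that the pdf of $U_1 = \ln Z_1 - H_M(\underline{\theta})$ is a permutation symmetric function of $(\theta_1,\theta_2)$, so that without loss of generality I may take $\theta_1 = \mu_1 = \min\{\theta_1,\theta_2\}$ and $\theta_2 = \mu_2 = \max\{\theta_1,\theta_2\}$, whence $\theta = \theta_1/\theta_2 \in (0,1]$. With $Y_i = X_i/\theta_i$, the variables $Y_1$ and $Y_2$ are iid $Gamma(\alpha,1)$ with df $G_\alpha$ and pdf $g_\alpha$.

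Next I would split $U_1$ according to which coordinate attains the minimum. On the event $\{X_1 \leq X_2\}$ we have $Z_1 = X_1$ and $H_M(\underline{\theta}) = \ln \theta_1$, so $U_1 = \ln(X_1/\theta_1) = \ln Y_1$; symmetrically, on $\{X_1 > X_2\}$ we have $U_1 = \ln Y_2$. This gives the decomposition
\[
\mathbb{E}_{\underline{\theta}}(U_1) = \mathbb{E}_{\underline{\theta}}\left[\ln Y_1 \, I(X_1 \leq X_2)\right] + \mathbb{E}_{\underline{\theta}}\left[\ln Y_2 \, I(X_1 > X_2)\right].
\]

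I would then rewrite each selection event in terms of $Y_1$ and $Y_2$. Since $X_1 = \theta_1 Y_1$ and $X_2 = \theta_2 Y_2$, the event $\{X_1 \leq X_2\}$ is $\{Y_2 \geq \theta Y_1\}$ and $\{X_1 > X_2\}$ is $\{Y_1 > Y_2/\theta\}$. Invoking independence of $Y_1$ and $Y_2$ and conditioning on the variable that carries the logarithm, the first expectation becomes $\int_0^{\infty} \ln(z)\left[1 - G_\alpha(\theta z)\right] g_\alpha(z)\, dz$ and the second becomes $\int_0^{\infty} \ln(z)\left[1 - G_\alpha(z/\theta)\right] g_\alpha(z)\, dz$, which together are exactly (3.4).

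There is no substantial obstacle here; the computation is routine and parallels Lemma 2.2. The only point requiring care is the bookkeeping of the inequality directions, so that the ratio $\theta = \mu_1/\mu_2 \leq 1$ (and not its reciprocal) lands in the correct argument of $G_\alpha$ in each of the two terms. Finiteness of the expectations, needed to justify splitting the integral, follows from $\mathbb{E}\lvert \ln Y_i \rvert < \infty$ under the $Gamma(\alpha,1)$ law.
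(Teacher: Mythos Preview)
Your proposal is correct and follows essentially the same approach as the paper, which simply states that the proof is similar to that of Lemma~2.2. Your explicit bookkeeping of the inequality directions and the scale ratio $\theta=\mu_1/\mu_2$ is exactly the adaptation the paper has in mind.
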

\begin{proof} Similar to the proof of Lemma 2.2.
\end{proof}
\noindent The risk function of an estimator $\delta_{c} \in \mathcal{M}_1$, under the squared error loss function \eqref{S3.E2}, is given by
\begin{equation}
R_{\theta}(\delta_{c})=\mathbb{E}\left[\left(\ln Z_1-c-H_M(\underline{\theta})\right)^2\right],~ 0 < \theta \leq 1, c \in \mathbb{R}.
\end{equation}
For any fixed $0 < \theta \leq 1$, the risk function in $(3.5)$ is minimized at 
\begin{align*}
c \equiv  c^*(\theta)&=\mathbb{E}_{\underline{\theta}}\left[\left(\ln Z_1-H_M(\underline{\theta}) \right)\right]=\mathbb{E}_{\underline{\theta}}\left(U_1\right)\\
&=\displaystyle{\int_{0}^{\infty}}\ln(z) \left[1-G_{\alpha}\left(\theta z\right)\right]g_{\alpha}(z)dz+ \displaystyle{\int_{0}^{\infty}}\ln(z)\left[1-G_{\alpha}\left(\frac{z}{\theta}\right)\right]g_{\alpha}(z)dz.
\end{align*}
Since $c^*(\theta)$ is a decreasing function of $\theta$ $\left(\frac{d}{d \theta} c^*(\theta)= \frac{\Gamma(2\alpha) \theta^{\alpha-1}}{(\Gamma(\alpha))^2(1+\theta)^{2\alpha}} \ln \theta \leq 0,  \forall \theta \in (0, 1]\right)$, we have,
\begin{equation}
\inf\limits_{0 <\theta \leq  1} c^*(\theta)=2\displaystyle{\int_{0}^{\infty}}\ln(z)\left[1-G_{\alpha}\left( z\right)\right]g_{\alpha}(z)dz=c_3(\alpha),~~ \text{say},~~~~\text{and}~~~ \sup\limits_{0 <\theta \leq  1} c^*(\theta)=\psi\left(\alpha\right)=c_1(\alpha), \text{say}
\end{equation}
 On using arguments similar to the ones used in proving Theorem $2.1$, we obtain the following result. 
 \begin{theorem}
 	Let $c_3(\alpha)=2\displaystyle{\int_{0}^{\infty}}\ln(z)\left[1-G_{\alpha}\left( z\right)\right]g_{\alpha}(z)dz$ and recall that $c_1(\alpha)=\psi\left(\alpha\right)$. Then, for estimating $H_M(\underline{\theta})$, under the mean squared error criterion, the estimators in the class $\mathcal{M}_{1,M}=\left\{d_{c} \in \mathcal{M}_1: c \in [c_3(\alpha),c_1(\alpha)]\right\}$ are admissible within the class $\mathcal{M}_1$, whereas the estimators in the class $\mathcal{M}_{1,I}=\left\{d_{c} \in \mathcal{M}_1 : c \in (-\infty,c_3(\alpha)) \cup (c_1(\alpha),\infty) \right\}$ are inadmissible for estimating $H_M(\underline{\theta})$. Moreover, for any $-\infty < b < c \leq c_3(\alpha)$ or $c_1(\alpha) \leq c <b <\infty$,
 	$$R_{\theta}\left(d_{{c}}\right)< R_{\theta}\left(d_{{b}}\right),~~~ \forall~ 0 < \theta \leq 1.$$	
 \end{theorem}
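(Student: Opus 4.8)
The plan is to replicate the proof of Theorem 2.1, now working on the parameter range $\theta \in (0,1]$ and using the quantities computed in $(3.6)$. The starting observation is that, for each fixed $\theta$, the risk $R_\theta(d_c)$ of $(3.5)$ is a parabola in $c$. Writing $U_1 = \ln Z_1 - H_M(\underline{\theta})$ and $c^*(\theta)=\mathbb{E}_{\underline{\theta}}(U_1)$ as in $(3.6)$, I would first record
\[
R_\theta(d_c) = \mathbb{E}_{\underline{\theta}}\!\left[(U_1 - c)^2\right] = \mathrm{Var}_{\underline{\theta}}(U_1) + \left(c - c^*(\theta)\right)^2 ,
\]
so that, for fixed $\theta$, $R_\theta(d_c)$ strictly decreases on $(-\infty, c^*(\theta)]$, strictly increases on $[c^*(\theta), \infty)$, and has a unique minimum at $c = c^*(\theta)$. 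This is the only analytic input needed; everything else is bookkeeping about where $c^*(\theta)$ lives.

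Next I would exploit the monotonicity $\frac{d}{d\theta}c^*(\theta)\le 0$ on $(0,1]$ established just before $(3.6)$, together with the endpoint evaluations $c^*(1)=c_3(\alpha)$ (attained at $\theta=1$) and $c^*(\theta)\to c_1(\alpha)=\psi(\alpha)$ as $\theta\to0^+$ (the supremum, not attained). Continuity of $c^*$ and the intermediate value theorem then give that $c^*$ takes every value in $[c_3(\alpha), c_1(\alpha))$. Consequently, for each $c\in[c_3(\alpha),c_1(\alpha))$ there is a $\theta_c\in(0,1]$ with $c^*(\theta_c)=c$, at which $d_c$ is the unique minimizer of the risk within $\mathcal{M}_1$; the standard argument (a dominating $d_b$ would be forced to tie at $\theta_c$, hence $b=c$) establishes admissibility of $d_c$ within $\mathcal{M}_1$.

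The two inadmissibility ranges and the risk ordering then fall out of the parabola shape. If $c<c_3(\alpha)$ then $c<c_3(\alpha)\le c^*(\theta)$ for all $\theta$, so $c$ sits on the decreasing branch for every $\theta$; hence for any $b<c\le c_3(\alpha)$ we get $R_\theta(d_c)<R_\theta(d_b)$ uniformly, and in particular $d_{c_3(\alpha)}$ dominates every $d_c$ with $c<c_3(\alpha)$. Symmetrically, if $c>c_1(\alpha)$ then $c^*(\theta)<c_1(\alpha)<c$ places $c$ on the increasing branch, so $d_{c_1(\alpha)}$ dominates and $R_\theta(d_c)<R_\theta(d_b)$ whenever $c_1(\alpha)\le c<b$. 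This proves inadmissibility on $(-\infty,c_3(\alpha))\cup(c_1(\alpha),\infty)$ together with the stated monotonicity of the risk.

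The main obstacle is the admissibility of the right endpoint $c=c_1(\alpha)$, because $c_1(\alpha)$ is the (non-attained) supremum of $c^*$, so the pointwise-optimality argument does not apply directly. I would settle it by a sign analysis of the risk difference
\[
R_\theta(d_{c_1(\alpha)}) - R_\theta(d_b) = \left(c_1(\alpha)-b\right)\left(c_1(\alpha)+b-2c^*(\theta)\right).
\]
If $b>c_1(\alpha)$, this is negative for all $\theta$ (since $c^*(\theta)<c_1(\alpha)$), so $d_b$ cannot dominate. If $b<c_1(\alpha)$, then because $c^*(\theta)\uparrow c_1(\alpha)$ as $\theta\to0^+$ one can choose $\theta$ small enough that $c^*(\theta)>\tfrac{1}{2}(c_1(\alpha)+b)$, again making the difference negative at that $\theta$; hence $d_b$ is strictly worse there and cannot dominate. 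Thus no $d_b$ dominates $d_{c_1(\alpha)}$, yielding its admissibility and closing the interval $[c_3(\alpha),c_1(\alpha)]$.
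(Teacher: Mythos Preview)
Your proposal is correct and follows essentially the same approach as the paper, which simply states that the proof uses ``arguments similar to the ones used in proving Theorem 2.1.'' Your explicit sign analysis of the risk difference for the non-attained endpoint $c_1(\alpha)$ is a cleaner alternative to the paper's one-line appeal to ``continuity of the risk function'' in the analogous step of Theorem 2.1, but the overall strategy---parabola in $c$, intermediate-value argument for admissibility on the interior, monotonicity on the tails for inadmissibility---is identical.
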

\noindent As a consequence of Theorem $3.1$, we have the following corollary, addressing admissibility and inadmissibility of some naive estimators, including generalized Bayes estimators of $H_M(\underline{\theta})$, belonging to the class $\mathcal{M}_{GB}$.
\begin{corollary}
	\begin{itemize}
		\item[(i)] For estimating $H_M(\underline{\theta})$ under the mean squared error criterion, the naive estimator $d_{\psi(\alpha)}(\underline{X})=\ln Z_1-\psi(\alpha)$ is admissible  within class $\mathcal{M}_1$, whereas the estimators $d_{\ln \alpha}(\underline{X})=\ln Z_1-\ln \alpha$ and $d_{\ln (\alpha+1)}(\underline{X})=\ln Z_1-\ln (\alpha+1)$ are inadmissible and are dominated by $d_{\psi(\alpha)}(\underline{X})=\ln Z_1-\psi(\alpha)$.
		
		\item[(ii)] For any $\alpha >0$, define $\beta_1(\alpha)=\psi^{-1}(c_3(\alpha))-\alpha$. Then
		\begin{itemize}
			\item[(a)] generalized Bayes estimators $\{\delta_{\psi(\alpha+\beta)}: \beta_1(\alpha) \leq \beta \leq 0\}$ are admissible within the class $\mathcal{M}_1$ of naive estimators. 
			\item[(b)] generalized Bayes estimators $\{\delta_{\psi(\alpha+\beta)}: \beta \in (-\alpha,\beta_1(\alpha)) \cup ( 0,\infty)\}$ are inadmissible for estimating  $H_M(\underline{\theta})$. For any $\beta \in (-\alpha,\beta_1(\alpha))$, the generalized Bayes estimator $d_{\psi(\alpha+\beta)}(\underline{X})=\ln Z_1 - \psi(\alpha+\beta)$ is dominated by the naive estimator $d_{c_3(\alpha)}(\underline{X})=\ln Z_1-c_3(\alpha)$, and for any $\beta \in \left(0, \infty\right)$, the generalized Bayes estimator $\delta_{\psi(\alpha+\beta)}$ is dominated by the naive estimator $d_{{\psi(\alpha)}}(\underline{X})=\ln Z_1-\psi(\alpha)$.
		\end{itemize}
\end{itemize}
\end{corollary}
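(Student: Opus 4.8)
The plan is to deduce everything from Theorem 3.1, the strict monotonicity of the digamma function $\psi$, and Lemma 2.1(ii), since the statement is essentially a matter of locating the constants $\psi(\alpha)$, $\ln\alpha$, $\ln(\alpha+1)$, and $\psi(\alpha+\beta)$ relative to the admissible interval $[c_3(\alpha),c_1(\alpha)]$ identified in Theorem 3.1, where $c_1(\alpha)=\psi(\alpha)$.

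For part (i), I would first observe that $c_1(\alpha)=\psi(\alpha)$ is the right endpoint of the admissible interval $[c_3(\alpha),c_1(\alpha)]$, so Theorem 3.1 immediately gives admissibility of $d_{\psi(\alpha)}$ within $\mathcal{M}_1$. To handle $d_{\ln\alpha}$ and $d_{\ln(\alpha+1)}$, I would invoke Lemma 2.1(ii), namely $\psi(\alpha)<\ln\alpha$, together with the trivial bound $\ln\alpha<\ln(\alpha+1)$, to place both $\ln\alpha$ and $\ln(\alpha+1)$ strictly to the right of $c_1(\alpha)$, hence inside the inadmissible region $(c_1(\alpha),\infty)$. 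Domination then follows from the monotonicity clause of Theorem 3.1: taking $c=c_1(\alpha)=\psi(\alpha)$ and $b\in\{\ln\alpha,\ln(\alpha+1)\}$ in the implication $c_1(\alpha)\le c<b<\infty \Rightarrow R_\theta(d_c)<R_\theta(d_b)$ for all $\theta\in(0,1]$ shows that $d_{\psi(\alpha)}$ dominates both estimators.

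For part (ii), the first step is to verify $\beta_1(\alpha)\in(-\alpha,0)$. Since $c^*(\theta)$ is strictly decreasing on $(0,1]$ (from the sign of the derivative computed just before (3.6)), its infimum $c_3(\alpha)$ is strictly smaller than its supremum $\psi(\alpha)$; because $\psi$ maps $(0,\infty)$ bijectively and increasingly onto $\mathbb{R}$, the quantity $\psi^{-1}(c_3(\alpha))$ is a well-defined positive number, so $\beta_1(\alpha)=\psi^{-1}(c_3(\alpha))-\alpha$ satisfies $-\alpha<\beta_1(\alpha)<0$. Next, using that $\psi$ is increasing, the membership $\psi(\alpha+\beta)\in[c_3(\alpha),\psi(\alpha)]=[\psi(\alpha+\beta_1(\alpha)),\psi(\alpha)]$ is equivalent to $\beta\in[\beta_1(\alpha),0]$; combined with the admissibility assertion of Theorem 3.1 this yields (a). For (b), if $\beta\in(-\alpha,\beta_1(\alpha))$ then $\psi(\alpha+\beta)<c_3(\alpha)$, so applying the domination clause of Theorem 3.1 with $b=\psi(\alpha+\beta)<c=c_3(\alpha)$ shows $d_{c_3(\alpha)}$ dominates $d_{\psi(\alpha+\beta)}$; and if $\beta\in(0,\infty)$ then $\psi(\alpha+\beta)>\psi(\alpha)=c_1(\alpha)$, so the same clause with $c=\psi(\alpha)<b=\psi(\alpha+\beta)$ shows $d_{\psi(\alpha)}$ dominates $d_{\psi(\alpha+\beta)}$.

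There is no genuine difficulty here, as the statement is a bookkeeping exercise translating the interval of Theorem 3.1 through the monotone map $\psi$. The only point requiring care is the strict inequality $c_3(\alpha)<\psi(\alpha)$ together with the well-definedness of $\psi^{-1}(c_3(\alpha))$, which are needed to guarantee that $\beta_1(\alpha)$ genuinely lies in $(-\alpha,0)$ and hence that the admissible $\beta$-interval $[\beta_1(\alpha),0]$ is nonempty and contained in the prior's parameter range $\beta>-\alpha$.
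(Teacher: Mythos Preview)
Your proposal is correct and follows exactly the approach the paper intends: the corollary is stated without proof as a direct consequence of Theorem~3.1, and your argument---locating $\psi(\alpha)$, $\ln\alpha$, $\ln(\alpha+1)$, and $\psi(\alpha+\beta)$ relative to $[c_3(\alpha),c_1(\alpha)]$ via Lemma~2.1(ii) and the strict monotonicity of $\psi$, then reading off admissibility and domination from Theorem~3.1---mirrors precisely the reasoning the paper spells out for the analogous Corollaries~2.1.1 and~2.1.2 in Section~2. Your care in verifying $\beta_1(\alpha)\in(-\alpha,0)$ is the exact counterpart of the paper's verification that $\beta_0(\alpha)\in(0,\alpha)$ preceding Corollary~2.1.2.
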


\noindent The following Lemma will be useful in deriving the result stated in Theorem 3.2.
\begin{lemma} \label{S4,L2}
	\begin{itemize}

	\item [(i)] For fixed $ v \in [1,\infty)$, the conditional pdf of $U_1= \ln Z_1-H_M(\underline{\theta})$, given $V=\frac{Z_2}{Z_1}=v$, is given by
	\begin{align*}
	f_{2,\underline{\theta}}(u|v)&=\frac{\frac{1}{\theta^{\alpha}}e^{2\alpha u}e^{-\left(1+\frac{v}{\theta}\right)e^{u}}+\theta^{\alpha} e^{2\alpha u}e^{-\left(1+v\theta\right)e^{u}}}{\Gamma(2\alpha)\left[\frac{1}{\theta^{\alpha}\left(1+\frac{v}{\theta}\right)^{2\alpha}}+\frac{\theta^{\alpha}}{\left(1+v \theta\right)^{2\alpha}}\right]}, ~ -\infty <u < \infty,~0 < \theta \leq 1.
	\end{align*}
\item [(ii)]	For any fixed $\alpha >0 $, define
\begin{align}
k_{v}(\theta)&=\frac{(1+v \theta)^{2 \alpha} \ln\left(1+\frac{v}{\theta}\right)+(\theta+v)^{2\alpha}\ln\left(1+v \theta\right)}{(1+v \theta)^{2\alpha}+(\theta+v)^{2\alpha}}~,~ v \in [1,\infty),~ 0 < \theta \leq 1.
\end{align}
Then, for $1 \leq v \leq \min\left\{1+\frac{1}{2\alpha},1+\sqrt{3}\right\} $,~$ \inf\limits_{0< \theta \leq  1} k_{v}(\theta)=\ln(1+v)$. 
\end{itemize}

\end{lemma}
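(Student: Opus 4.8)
The plan is to treat part (i) as the exact analogue of Lemma 2.3: with $U_1=\ln Z_1-H_M(\underline\theta)$, $V=Z_2/Z_1$ and $\theta=\mu_1/\mu_2\in(0,1]$ playing the roles of $U$, $T=Z_1/Z_2$ and $\mu$, the computation of the conditional density of $U_1$ given $V=v$ is verbatim that of Lemma 2.3 after these substitutions, so I would simply cite that derivation. The substance is part (ii), and here I would follow the template of Lemma 2.4 while confronting the one genuinely new difficulty. First I record the two boundary facts: substituting $\theta=1$ makes the two weights and the two logarithms coincide, giving $k_v(1)=\ln(1+v)$, while as $\theta\downarrow 0$ the term $\ln(1+\tfrac{v}{\theta})$ blows up and its weight tends to $1$, so $k_v(\theta)\to\infty$. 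Hence it suffices to show $k_v(\theta)\ge\ln(1+v)$ for all $\theta\in(0,1]$. Clearing the positive denominator, this is equivalent to the nonnegativity of
\[
H(\theta)=(1+v\theta)^{2\alpha}\Big[\ln\big(1+\tfrac{v}{\theta}\big)-\ln(1+v)\Big]-(\theta+v)^{2\alpha}\big[\ln(1+v)-\ln(1+v\theta)\big],
\]
equivalently, writing $A=\ln(1+\tfrac v\theta)-\ln(1+v)\ge 0$ and $B=\ln(1+v)-\ln(1+v\theta)\ge 0$, to the power inequality $\tfrac{A}{B}\ge\big(\tfrac{\theta+v}{1+v\theta}\big)^{2\alpha}$ on $(0,1)$.

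Here lies the departure from Lemma 2.4. For $\theta\in(0,1]$ and $v\ge 1$ one has $\tfrac{\theta+v}{1+v\theta}-1=\tfrac{(v-1)(1-\theta)}{1+v\theta}\ge 0$, so the base of the power is now at least $1$. The elementary reduction used in Lemma 2.4 — bound the left ratio below by $1$ because the base is smaller than $1$ — therefore no longer closes the argument: the right-hand side must be controlled quantitatively, and this is exactly what forces the restrictions on $v$.

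To establish $H\ge 0$ I would show that $H$ is nonincreasing on $(0,1]$; since $H(1)=0$, this gives $H\ge 0$. The endpoint is delicate because $A$ and $B$ both vanish at $\theta=1$, so $H(1)=H'(1)=0$ and the behaviour at $\theta=1$ is that of a double root, rendering first-order information useless. Expanding about $\theta=1$ with $s=1-\theta$ produces the leading term
\[
H(\theta)=\frac{v\,(1+v)^{2\alpha}}{(1+v)^{2}}\big[\,1-2\alpha(v-1)\,\big]\,s^{2}+O(s^{3}),
\]
so the local sign at the endpoint is governed precisely by $1-2\alpha(v-1)\ge 0$, that is, by $v\le 1+\tfrac{1}{2\alpha}$. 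For $\theta$ away from $1$ I would reduce the monotonicity condition $H'(\theta)\le 0$ to an algebraic (polynomial) inequality in $\theta$ and $v$ that is expected to hold under $v^{2}-2v-2\le 0$, i.e. $v\le 1+\sqrt{3}$, while the easy boundary behaviour $H'(\theta)\to-\infty$ as $\theta\downarrow 0$ is checked directly.

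The main obstacle is this global step. The double root at $\theta=1$ means the two thresholds play complementary roles: $v\le 1+\tfrac{1}{2\alpha}$ secures the second-order sign at the endpoint, whereas $v\le 1+\sqrt{3}$ must secure monotonicity of $H$ away from it. The delicate part is to repackage the requirement $H'\le 0$, uniformly in $\alpha>0$ and $\theta\in(0,1)$, into a single tractable inequality in which these two conditions emerge cleanly as the sufficient bounds; controlling the interplay of the factor $2\alpha$ in the exponent with the logarithmic differences $A$ and $B$ across the whole interval, rather than only near $\theta=1$, is where the real work will be.
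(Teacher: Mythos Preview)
Your plan for part (i) and the initial reduction in part (ii) match the paper: the same boundary checks $k_v(1)=\ln(1+v)$, $k_v(0^+)=\infty$, and the same reformulation as a power inequality with base $\tfrac{\theta+v}{1+v\theta}\ge 1$. The gap is the global step, which you yourself flag as ``where the real work will be.'' Your proposed route---prove $H'(\theta)\le 0$ on $(0,1]$ while carrying $\alpha$ as a live parameter---keeps three variables $(\theta,v,\alpha)$ in play and offers no concrete mechanism to make the two thresholds emerge; the Taylor expansion at $\theta=1$ correctly identifies $v\le 1+\tfrac{1}{2\alpha}$ as a \emph{necessary} local condition, but tells you nothing about how to close the inequality away from the endpoint.

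The paper's key idea, which your plan is missing, is to use the hypothesis $v\le 1+\tfrac{1}{2\alpha}$ not as a local second-order condition but to \emph{eliminate $\alpha$ globally}. Rewriting your target as $\bigl(\tfrac{1+v\theta}{\theta+v}\bigr)^{2\alpha}\ge B/A$ with base $\le 1$, the assumption $2\alpha\le \tfrac{1}{v-1}=:\beta$ gives $\bigl(\tfrac{1+v\theta}{\theta+v}\bigr)^{2\alpha}\ge\bigl(\tfrac{1+v\theta}{\theta+v}\bigr)^{\beta}$, so it suffices to prove the $\alpha$-free inequality $x^{\beta}\ge B/A$ with $x=\tfrac{1+v\theta}{\theta+v}\in\bigl(\tfrac{\beta}{\beta+1},1\bigr)$. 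After expressing $A$ and $B$ in terms of $x$ and $\beta$, this becomes a two-variable inequality $\Psi(x)\ge 0$ that the paper dispatches by three successive differentiations in $x$, reducing finally to a polynomial sign check; the condition $\beta\ge 1/\sqrt 3$ (equivalently $v\le 1+\sqrt 3$) is used exactly once, at that last polynomial step. Thus the two thresholds act sequentially rather than in the ``local/global complementary'' way you describe: the first removes $\alpha$, the second closes the residual inequality.
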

\begin{proof} 
(i) Similar to the proof of Lemma $2.3.$\\	
(ii) Since $\lim\limits_{\theta \to 0}k_v(\theta)=\infty$, we have, $\sup\limits_{0< \theta \leq  1} k_{v}(\theta)=\infty$. Note that $k_v(1)=\ln (1+v), v \geq 1.$ Also, for $0 < \theta \leq 1$ and $v \geq 1$, we have $k_v(\theta) \geq k_v(1)$, if, and only if,
\begin{align}
 (1+v \theta)^{2 \alpha} \ln\left(\frac{1+v/\theta}{1+v}\right)& \geq ( \theta+v )^{2 \alpha} \ln\left(\frac{1+v}{1+v \theta}\right) \nonumber \\
\iff \left(\frac{1+v \theta}{\theta +v}\right)^{2 \alpha} & \geq \frac{\ln\left(\frac{1+v}{1+v \theta}\right) }{\ln\left(\frac{1+v/\theta}{1+v}\right)} .\nonumber 
\end{align}
Since, for $2 \alpha \leq \frac{1}{v-1}$, $\left(\frac{1+v \theta}{\theta +v}\right)^{2 \alpha} \geq \left(\frac{1+v \theta}{\theta +v}\right)^{\frac{1}{v-1}}$, to show that $\inf\limits_{0< \theta \leq  1} k_{v}(\theta)=\ln(1+v)$, for $1 \leq v \leq \min\left\{1+\frac{1}{2\alpha},1+\sqrt{3}\right\} $, it suffices to show that,
\begin{align}
\left(\frac{1+v \theta}{\theta +v}\right)^{\frac{1}{v-1}} &\geq \frac{\ln\left(\frac{1+v}{1+v \theta}\right) }{\ln\left(\frac{1+v/\theta}{1+v}\right)},
\end{align}
provided $1 \leq v \leq \min\left\{1+\frac{1}{2\alpha},1+\sqrt{3}\right\} $. Let $\frac{1+v \theta}{\theta +v}=x$ and $\frac{1}{v-1}=\beta$, so that $\beta >0$ and $ x \in \left(\frac{\beta}{1+\beta},1\right)$. Proving inequality $(3.8)$ is equivalent to showing
\begin{align}
\Psi(x)=-x ^{\beta}\left[\ln (\beta+1)+\ln \left(x-\frac{\beta}{\beta+1}\right)\right] - \ln \beta -\ln \left(\frac{\beta+1}{\beta}-x\right)+\ln x & \geq 0,
\end{align}
for all $x \in \left(\frac{\beta}{\beta+1}-1\right)$ and $\beta \geq \max\{2\alpha,\frac{1}{\sqrt{3}}\}$.\par 

Note that, $\lim\limits_{x \to \frac{\beta}{\beta+1} } \Psi(x)= \infty$ and $\lim\limits_{x \to 1} \Psi(x)=0$. Thus, to prove $(3.9)$, it is sufficient to show that $\Psi'(x) \leq 0$, $\forall x \in \left(\frac{\beta}{\beta+1} ,1 \right)$, $\beta \geq \max\{2\alpha,\frac{1}{\sqrt{3}}\}$. We have 
$$ \Psi'(x)= x^{\beta-1}k_1(x), ~~ x \in \left(\frac{\beta}{\beta+1},1\right), ~~ \beta \geq \max\left\{2\alpha,\frac{1}{\sqrt{3}}\right\},$$ 
where
\begin{align*}
k_1(x)&= -\frac{x}{x-\frac{\beta}{\beta+1}}-\beta\left\{\ln(\beta+1)+\ln\left(x-\frac{\beta}{\beta+1}\right)\right\}-\frac{\beta+1}{\beta x^{\beta}\left(\frac{\beta+1}{\beta}-x\right)},~x \in \left(\frac{\beta}{\beta+1},1\right), ~~ \beta \geq \max\left\{2\alpha,\frac{1}{\sqrt{3}}\right\}.
\end{align*}
We have $\lim\limits_{x \to \frac{\beta}{\beta+1}} k_1(x)=-\infty$ and $\lim\limits_{x \to 1} k_1(x)=0$. Thus, to show that $\Psi'(x) \leq 0$, $\forall x \in \left( \frac{\beta}{\beta+1},1\right),~\beta \geq \max\left\{2\alpha,\frac{1}{\sqrt{3}}\right\} $, it suffices to show that $k_1'(x) \geq 0$, $\forall x \in \left( \frac{\beta}{\beta+1},1\right),~\beta \geq \max\left\{2\alpha,\frac{1}{\sqrt{3}}\right\} $. We have
\begin{align*}
k_1'(x)= (1-x) \left[\frac{\beta}{\left(x-\frac{\beta}{\beta+1}\right)^2} - \frac{(\beta+1)^2}{\beta x^{\beta+1}\left(\frac{\beta+1}{\beta}-x\right)^2}\right],~~x \in \left( \frac{\beta}{\beta+1},1\right),~\beta \geq \max\left\{2\alpha,\frac{1}{\sqrt{3}}\right\}.
\end{align*}
To show that $ k_1'(x) \geq 0$, $\forall x \in \left( \frac{\beta}{\beta+1},1\right),~\beta \geq \max\left\{2\alpha,\frac{1}{\sqrt{3}}\right\} $, we will show that
\begin{align*}
k_2(x)&=\beta x^{\frac{\beta+1}{2}}\left(\frac{\beta+1}{\beta}-x\right)-(\beta+1)\left(x-\frac{\beta}{\beta+1}\right)\\
&=(\beta+1)x^{\frac{\beta+1}{2}} -\beta x^{\frac{\beta+3}{2}}-(\beta+1)x+\beta \geq 0,~\forall x \in \left( \frac{\beta}{\beta+1},1\right),~\beta \geq \max\left\{2\alpha,\frac{1}{\sqrt{3}}\right\}.
\end{align*}
We have, for $ x \in \left( \frac{\beta}{\beta+1},1\right),~\beta \geq \max\left\{2\alpha,\frac{1}{\sqrt{3}}\right\} $,
\begin{align*}
k_2'(x)&=\frac{(\beta+1)^2}{2}x^{\frac{\beta-1}{2}}-\frac{\beta(\beta+3)}{2}x^{\frac{(\beta+1)}{2}}-(\beta+1)\\
k_2''(x)&=\frac{(\beta+1)x^{\frac{(\beta-3)}{2}}}{4}\left[\beta^2-1-\beta(\beta+3)x\right]\\
&\leq \frac{(\beta+1)x^{\frac{(\beta-3)}{2}}}{4}\left[\beta^2-1-\frac{\beta^2(\beta+3)}{\beta+1}\right]=-\frac{(2\beta^2+\beta+1)}{4}x^{\frac{\beta-3}{2}}<0
\end{align*}
\begin{align*}
\implies k_2'(x) \leq \lim\limits_{x \to \frac{\beta}{\beta+1}}k_2'(x)&=\frac{\beta^{\frac{\beta-1}{2}}}{2(\beta+1)^{\frac{\beta+1}{2}}}\left[3\beta+1-2\beta^2\left(1+\frac{1}{\beta}\right)^{\frac{\beta+3}{2}}\right]\\
&\geq \frac{\beta^{\frac{\beta-1}{2}}}{2(\beta+1)^{\frac{\beta+1}{2}}}\left[3\beta+1-2\beta^2\left(1+\frac{\beta+3}{2\beta}\right)\right]\\
&=\frac{\beta^{\frac{\beta-1}{2}}}{2(\beta+1)^{\frac{\beta+1}{2}}}\left[1-3\beta^2\right] \leq 0,\\
\implies k_2(x)&\geq \lim\limits_{x \to 1}k_2(x)=0.
\end{align*}
Hence, the assertion follows.
\end{proof}	
\noindent  Let $d_{\phi}(V)=\ln Z_1-\phi(V)$ be a scale and permutation equivariant estimator of $H_M(\underline{\theta})$. For any fixed $v \in [1, \infty)$, the conditional risk of $d_{\phi}(V)$, given $V=v$, is obtained as
\begin{equation}
R_1(\theta,\phi(v))=\mathbb{E}_{\underline{\theta}}\left[\left(\ln Z_1-\phi(V)-H_M(\underline{\theta})\right)^2\Big|V=v\right],~ 0 <\theta \leq 1.
\end{equation}
 For any fixed $v \in [1,\infty)$ and $\theta \in (0, 1]$, the choice of $\phi(\cdot)$ that minimizes the conditional risk  $(3.10)$ is obtained as
\begin{align*}
\phi_{\theta}(v)&=\mathbb{E}_{\underline{\theta}}\left[\left(\ln Z_1-H_M(\underline{\theta})\right)\Big|V=v\right]\\
&=\mathbb{E}_{\underline{\theta}}\left[U_1|V=v\right].
\end{align*}
Using Lemma $3.2.(i)$, we obtain
$\phi_{\theta}(v)= \psi(2\alpha)-k_v(\theta), ~ 0 < \theta \leq 1, v \in [1,\infty),$
where, $ k_v(\theta)$ is given by $(3.7)$. Further, using Lemma $3.2 ~(ii)$, we have,
for $1 \leq v \leq \min\left\{1+1/(2\alpha),1+\sqrt{3}\right\} $,
\begin{equation}
\sup\limits_{0 < \theta \leq  1} \phi_{v}(\theta)=\psi(2\alpha)-\ln(1+v)=\phi_{*}(v),~ \text{(say)}. 
\end{equation}
The following theorem is an analogue of Theorem 2.2 and provides a sufficient condition for inadmissibility of an arbitrary scale and permutation equivariant estimator of $H_M(\underline{\theta})$, under the mean squared error criterion. Since the proof of the following theorem is similar to that of Theorem 2.2, it is omitted here.
\begin{theorem}
	For estimating $H_M(\underline{\theta})$ under the mean squared error criterion, consider
	a scale and permutation equivariant estimator, $d_{\phi}\left(\underline{X}\right)=\ln Z_1-\phi\left(V\right) $ where, $V=Z_2/Z_1$ and $ \phi(\cdot)$
	is a real valued function defined on $[1,\infty)$. Let $$\mathbb{P}_{\underline{\theta}} \left[\left\{V : V \leq \min\left\{1+\frac{1}{2\alpha},1+\sqrt{3}\right\} \text{and}~ \phi(V) > \phi_*(V) \right\} \right]>0,~\text{for some}~ \underline{\theta} \in \Theta,$$ where, $\phi_*(V) $ is defined by $(3.11)$. Then, the estimator $\delta_{\phi}(\cdot)$ is dominated by $d_{\phi}^I\left(\underline{X}\right)=\ln Z_1-\phi^I\left(V\right),$ where,
\begin{equation}
	\phi^I\left(v\right)=\begin{cases}
	\phi_*(v), & \text{ if }~ \phi(v) > \phi_*(v)~ \text{and}~ v \leq \min\left\{1+\frac{1}{2\alpha},1+\sqrt{3}\right\} ,\\
	\phi(v), & \text{otherwise}.
	\end{cases}
\end{equation}
\end{theorem}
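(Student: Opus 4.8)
The plan is to mirror the orbit-by-orbit argument used for Theorem 2.2, conditioning on $V = Z_2/Z_1$ and comparing the two conditional risks pointwise in $v$. First I would write the risk difference as
\[
R_{\theta}(d_{\phi}) - R_{\theta}(d_{\phi}^I) = \mathbb{E}_{\underline{\theta}}\left[D_{\underline{\theta}}(V)\right],
\]
where, for each $v \in [1,\infty)$,
\[
D_{\underline{\theta}}(v) = \left(\phi(v) - \phi^I(v)\right)\left[\phi(v) + \phi^I(v) - 2\phi_{\theta}(v)\right],
\]
and $\phi_{\theta}(v) = \mathbb{E}_{\underline{\theta}}\left[U_1 \mid V=v\right]$ is the conditional-risk minimizer computed above through Lemma 3.2(i). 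Expanding the two conditional squared errors and cancelling the common $\mathbb{E}[U_1^2 \mid V=v]$ term produces exactly this $D_{\underline{\theta}}(v)$, the precise analogue of the quantity appearing in the proof of Theorem 2.2.

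Next I would fix $v$ and split according to the definition (3.12) of $\phi^I$. If either $v > \min\{1+1/(2\alpha), 1+\sqrt{3}\}$ or $\phi(v) \leq \phi_*(v)$, then $\phi^I(v) = \phi(v)$ and $D_{\underline{\theta}}(v) = 0$. In the complementary case, where $v \leq \min\{1+1/(2\alpha), 1+\sqrt{3}\}$ and $\phi(v) > \phi_*(v)$, we have $\phi^I(v) = \phi_*(v)$, and here I would invoke the key identity (3.11), namely that $\phi_*(v) = \sup_{0 < \theta \leq 1}\phi_{\theta}(v)$ on this restricted range, which is exactly what Lemma 3.2(ii) delivers. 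Since then $\phi_*(v) \geq \phi_{\theta}(v)$ for every $\theta \in (0,1]$, we obtain
\[
D_{\underline{\theta}}(v) = \left(\phi(v) - \phi_*(v)\right)\left[\phi(v) + \phi_*(v) - 2\phi_{\theta}(v)\right] \geq \left(\phi(v) - \phi_*(v)\right)^2 \geq 0.
\]
Taking both cases together gives $D_{\underline{\theta}}(v) \geq 0$ for all $v$ and all $\underline{\theta}$, while the hypothesis guarantees that the set of $v$ on which $D_{\underline{\theta}}(v) > 0$ carries positive probability for some $\underline{\theta}$; hence $R_{\theta}(d_{\phi}) \geq R_{\theta}(d_{\phi}^I)$ throughout, with strict inequality somewhere, which is the asserted domination.

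The one place where this argument genuinely departs from the proof of Theorem 2.2 — and which I expect to be the main obstacle — is the restriction $v \leq \min\{1+1/(2\alpha), 1+\sqrt{3}\}$. For the worse-population problem, Lemma 2.4 gives $\inf_{\mu \geq 1} k_t(\mu) = \ln(1+t)$ for every $t$, so the supremum identity $\Phi_*(t) = \sup_{\mu}\Phi_{\mu}(t)$ holds unconditionally and no truncation is needed. Here, however, Lemma 3.2(ii) establishes $\inf_{0 < \theta \leq 1} k_v(\theta) = \ln(1+v)$ only on the restricted range, so outside it the sign of $\phi_*(v) - \phi_{\theta}(v)$ is uncontrolled and the lower bound $D_{\underline{\theta}}(v) \geq (\phi(v)-\phi_*(v))^2$ can fail. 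The definition (3.12) is therefore engineered to leave $\phi$ unmodified precisely where the supremum identity is unavailable, forcing $D_{\underline{\theta}}(v) = 0$ there; the care required is simply to verify that this built-in truncation matches the range supplied by Lemma 3.2(ii), after which the nonnegativity of the risk difference follows.
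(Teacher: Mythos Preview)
Your proposal is correct and follows exactly the approach the paper intends: the paper explicitly omits the proof of this theorem, stating that it is ``similar to that of Theorem 2.2,'' and your argument is precisely the transcription of that proof to the better-population setting, with the additional (and correctly handled) truncation in $v$ dictated by the restricted range in Lemma~3.2(ii). Your closing paragraph accurately pinpoints the only substantive difference from Theorem~2.2 and explains why the definition of $\phi^I$ in (3.12) is tailored to absorb it.
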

\noindent For estimating $H_M(\underline{\theta})$ under the mean squared error criterion, we obtain the following result on inadmissibility of any naive estimator $d_{c}(\underline{X})$. As a consequence of Theorem 3.2, we also provide the shrinkage estimators dominating on $d_{c}(\underline{X})$.
\begin{corollary}
	\begin{itemize}
	\item [(i)]	 For estimating $H_M(\underline{\theta})$ under the mean squared error criterion, any natural estimator $d_{c}(\underline{X})=\ln Z_1-c$,  with $c> \psi(2\alpha)-\ln (1+\lambda) ;~ \lambda= \min\left\{1+\frac{1}{2\alpha},1+\sqrt{3}\right\}$, is inadmissible and is dominated by the shrinkage estimator
		\begin{equation}
		d_{c}^{(S)}(\underline{X})=\begin{cases*}
		\ln(Z_1+Z_2)-\psi(2\alpha), & \parbox[t]{6cm}{$\text{if}~ 1\leq \frac{Z_2}{Z_1} \leq \min\left\{1+\frac{1}{2\alpha},1+\sqrt{3}\right\}  $\\ \hspace*{2cm} and \\ $ \frac{Z_2}{Z_1} > e^{\psi(2\alpha)-c}-1$}\\
		\ln Z_1 -c, & \parbox[t]{5.5cm}{$\text{otherwise}.$} \\
		\end{cases*}
		\end{equation}

\item[(ii)]	For estimating $H_M(\underline{\theta})$ under the mean squared error criterion, the naive estimators $d_{\ln \alpha}(\underline{X})=\ln Z_1-\ln \alpha$ and $d_{\ln (\alpha+1)}(\underline{X})=\ln Z_1-\ln (\alpha+1)$ are inadmissible  and are dominated by the shrinkage estimators
	\begin{equation}
	d_{\ln \alpha}^{(S)}(\underline{X})=\begin{cases*}
	\ln(Z_1+Z_2)-\psi(2\alpha), & \parbox[t]{6cm}{$\text{if},~ 1\leq \frac{Z_2}{Z_1} \leq \min\left\{1+\frac{1}{2\alpha},1+\sqrt{3}\right\} $}\\
	\ln Z_1 -\ln \alpha, & \parbox[t]{5.5cm}{$\text{otherwise}$} \\
	\end{cases*}
	\end{equation}
and 	
	\begin{equation}
	d_{\ln (\alpha+1)}^{(S)}(\underline{X})=\begin{cases*}
	\ln(Z_1+Z_2)-\psi(2\alpha), & \parbox[t]{6cm}{$\text{if},~ 1\leq \frac{Z_2}{Z_1} \leq \min\left\{1+\frac{1}{2\alpha},1+\sqrt{3}\right\}$}\\
	\ln Z_1 -\ln(\alpha+1), & \parbox[t]{5.5cm}{$\text{otherwise}$} \\
	\end{cases*}
	\end{equation}
	respectively.
	\end{itemize}
\end{corollary}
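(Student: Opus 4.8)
The plan is to derive both parts as direct applications of Theorem 3.2, the general inadmissibility criterion for scale and permutation equivariant estimators of $H_M(\underline{\theta})$. Every naive estimator $d_{c}(\underline{X})=\ln Z_1-c$ corresponds to the constant choice $\phi(v)\equiv c$ in the notation of that theorem, so the whole argument reduces to two routine checks: first, verifying that the probability hypothesis of Theorem 3.2 holds for this $\phi$, and second, confirming that the improved estimator $d_{\phi}^I=\ln Z_1-\phi^I(V)$ produced by formula $(3.12)$ coincides with the shrinkage estimator displayed in $(3.13)$.

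For part (i), write $\lambda=\min\left\{1+\frac{1}{2\alpha},1+\sqrt{3}\right\}$ and $\tau=e^{\psi(2\alpha)-c}-1$. Since $\phi_{*}(v)=\psi(2\alpha)-\ln(1+v)$ by $(3.11)$, the inequality $\phi(v)=c>\phi_{*}(v)$ is equivalent to $v>\tau$. The standing assumption $c>\psi(2\alpha)-\ln(1+\lambda)$ is exactly the statement $\tau<\lambda$, so the set $\{v:1\le v\le\lambda,\ v>\tau\}$ is a nonempty subinterval of $[1,\infty)$; because $V=Z_2/Z_1$ has a continuous distribution supported on $[1,\infty)$, this event has positive probability under every $\underline{\theta}\in\Theta$, and the hypothesis of Theorem 3.2 is met. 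Feeding $\phi^I$ from $(3.12)$ into $d_{\phi}^I=\ln Z_1-\phi^I(V)$ gives $\ln Z_1-\psi(2\alpha)+\ln(1+V)=\ln(Z_1+Z_2)-\psi(2\alpha)$ on the region $\{1\le V\le\lambda,\ V>\tau\}$ and $\ln Z_1-c$ elsewhere, which is precisely the estimator $(3.13)$.

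For part (ii) I would specialize $c=\ln\alpha$ and $c=\ln(\alpha+1)$. Both values satisfy the hypothesis of (i): since $\lambda>1$ (both $1+\frac{1}{2\alpha}$ and $1+\sqrt{3}$ exceed $1$) and, by Lemma 2.1(ii) applied at $2\alpha$, $\psi(2\alpha)<\ln(2\alpha)$, one gets $\psi(2\alpha)-\ln(1+\lambda)<\ln(2\alpha)-\ln 2=\ln\alpha\le\ln(\alpha+1)$, so part (i) applies to each. The displayed forms $(3.14)$ and $(3.15)$ differ from the generic $(3.13)$ only in that the threshold condition $V>\tau$ has vanished, and this is justified by showing $\tau<1$, so that $V\ge 1$ forces $V>\tau$ automatically and the shrinkage region collapses to $\{1\le V\le\lambda\}$. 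For $c=\ln\alpha$ one has $\tau=e^{\psi(2\alpha)}/\alpha-1<2-1=1$, again using $e^{\psi(2\alpha)}<2\alpha$; for $c=\ln(\alpha+1)$ the same bound gives $\tau=e^{\psi(2\alpha)}/(\alpha+1)-1<2\alpha/(\alpha+1)-1<1$. Reading $d_{c}^{(S)}$ off $(3.13)$ with these simplified regions yields $(3.14)$ and $(3.15)$ respectively.

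The argument is essentially bookkeeping once Theorem 3.2 is available; the only real content is the repeated use of the sharp digamma bound $\psi(2\alpha)<\ln(2\alpha)$ from Lemma 2.1(ii), which is what makes $\tau<1$ (and hence the clean form of the shrinkage regions) valid for all $\alpha>0$. I expect the one point requiring genuine care to be confirming that the event in the hypothesis of Theorem 3.2 truly has positive probability for every $\underline{\theta}$ — that is, that the continuous statistic $V$ assigns mass to the nonempty interval $\{1\le v\le\lambda,\ v>\tau\}$ — rather than any delicate analytic estimate.
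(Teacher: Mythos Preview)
Your proposal is correct and follows the same route the paper takes: the paper states this corollary simply ``as a consequence of Theorem 3.2'' without further argument, and your write-up is precisely the bookkeeping needed to unpack that consequence, mirroring what the paper spells out explicitly in the analogous $H_S$ case after Theorem 2.2. Your verification that $\tau<1$ for $c=\ln\alpha$ and $c=\ln(\alpha+1)$ (via $\psi(2\alpha)<\ln(2\alpha)$) is a useful detail the paper leaves implicit, since it is exactly what collapses the shrinkage region in $(3.13)$ to the cleaner forms $(3.14)$ and $(3.15)$.
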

\section{Simulation}
In this section, we present results of a simulation study carried out to numerically assess the performances of various estimators of $H_S(\underline{\theta})$ and $H_M(\underline{\theta})$  in terms of the mean squared error (mse) and the absolute bias. For the numerical study, we have taken random samples of size $n=3,5,10,15,20$ etc. from relevant gamma distributions for different configurations of $\alpha$. The risk (mse) and the absolute bias values of various proposed estimators are simulated based on $60,000$ samples of size $n$. In Figure $4.2$ (Figure $4.4$), we have plotted simulated mses of the naive estimators $\delta_{\ln \alpha}$, $\delta_{\ln (\alpha+1)}$ and $\delta_{\psi(\alpha)}$ ( $d_{\ln \alpha}$, $d_{\ln (\alpha+1)}$ and $d_{\psi(\alpha)}$) of $H_S(\underline{\theta})$ ( $H_M(\underline{\theta})$). In Figures $4.5$ and $4.6$, we have plotted simulated absolute bias values of the three naive estimators of $H_S(\underline{\theta})$ and $H_M(\underline{\theta})$, respectively. In Figure $4.3$, we have plotted the mse values of the estimators $\delta_{\ln \alpha}$ and $\delta_{\ln (\alpha+1)}$ and their improved versions $\delta^{(S)}_{\ln \alpha}$ and $\delta^{(S)}_{\ln (\alpha+1)}$, respectively. In Figure $4.7$, we have plotted mse values of estimators $d_{\ln \alpha}$ and $d_{\ln (\alpha+1)}$ and their improved versions $d^{(S)}_{\ln \alpha}$ and  $d^{(S)}_{\ln (\alpha+1)}$, respectively.
Based on these plotted graphs, the following conclusions are obvious:
\begin{itemize}
	\item [(i)] For estimating $H_S(\underline{\theta})$, under the mean squared error criterion, among the three naive estimators ($\delta_{\ln (\alpha)}$, $\delta_{\ln (\alpha+1)}$ and $\delta_{\psi(\alpha)}$), the estimator $\delta_{\psi(\alpha)}$ performs better except for smaller values of $\mu$ (see Figure 4.2.).
	
	\item[(ii)] For estimating $H_S(\underline{\theta})$, mean squared errors(mses) of the estimators $\delta^{(S)}_{\ln \alpha}$ and $\delta^{(S)}_{\ln (\alpha+1)}$ are nowhere larger than mses of $\delta_{\ln \alpha}$ and $\delta_{\ln (\alpha+1)}$ respectively (see Figure 4.3.).
	
	\item[(iii)] For estimating $H_M(\underline{\theta})$ under the mean squared error criterion, the estimator $d_{\psi(\alpha)}$ uniformly performs better among all the three naive estimators ($d_{\ln \alpha}$, $d_{\ln (\alpha+1)}$ and $d_{\psi(\alpha)}$). (see Figure 4.4.)
	
	\item[(iv)] For estimating $H_S(\underline{\theta})$, the estimator $\delta_{\ln (\alpha+1)}$ has smaller absolute bias than the other two naive estimators ($\delta_{\ln (\alpha)}$ and $\delta_{\psi(\alpha)}$) for smaller values of $\mu$. For large values of $\mu$, $\delta_{\psi(\alpha)}$ seems to be a good choice, in terms of the absolute bias (see Figure 4.5.).
	\item[(v)] For estimating $H_M(\underline{\theta})$ in terms of absolute bias, the naive estimators $d_{\ln \alpha}$ and $d_{\ln (\alpha+1)}$ are uniformly dominated by the naive estimator $d_{\psi(\alpha)}$. Moreover, $d_{\ln \alpha}$ dominates $d_{\ln (\alpha+1)}$ in terms of the absolute bias (see Figure 4.6.).	
	\item[(vi)] For estimating $H_M(\underline{\theta})$, the mean squared error of the estimators $d^{(S)}_{\ln \alpha}$ and $d^{(S)}_{\ln (\alpha+1)}$ are nowhere larger than mses of $d_{\ln \alpha}$ and $d_{\ln (\alpha+1)}$ respectively (see Figure 4.7.).
	\item[(vii)] Based on the simulation study, we observe that the estimator $\delta_{\psi}$ ($d_{\psi}$) performs better (in terms of the mse and the absolute bias), for estimating the selected Shannon entropy $H_S(\underline{\theta})$($H_M(\underline{\theta})$).
\end{itemize}
\FloatBarrier
\begin{figure}[ht!]
	\begin{center}
		\begin{subfigure}[b]{0.5\textwidth}
			\centering
			\includegraphics[height=2.3in,width=6.5cm]{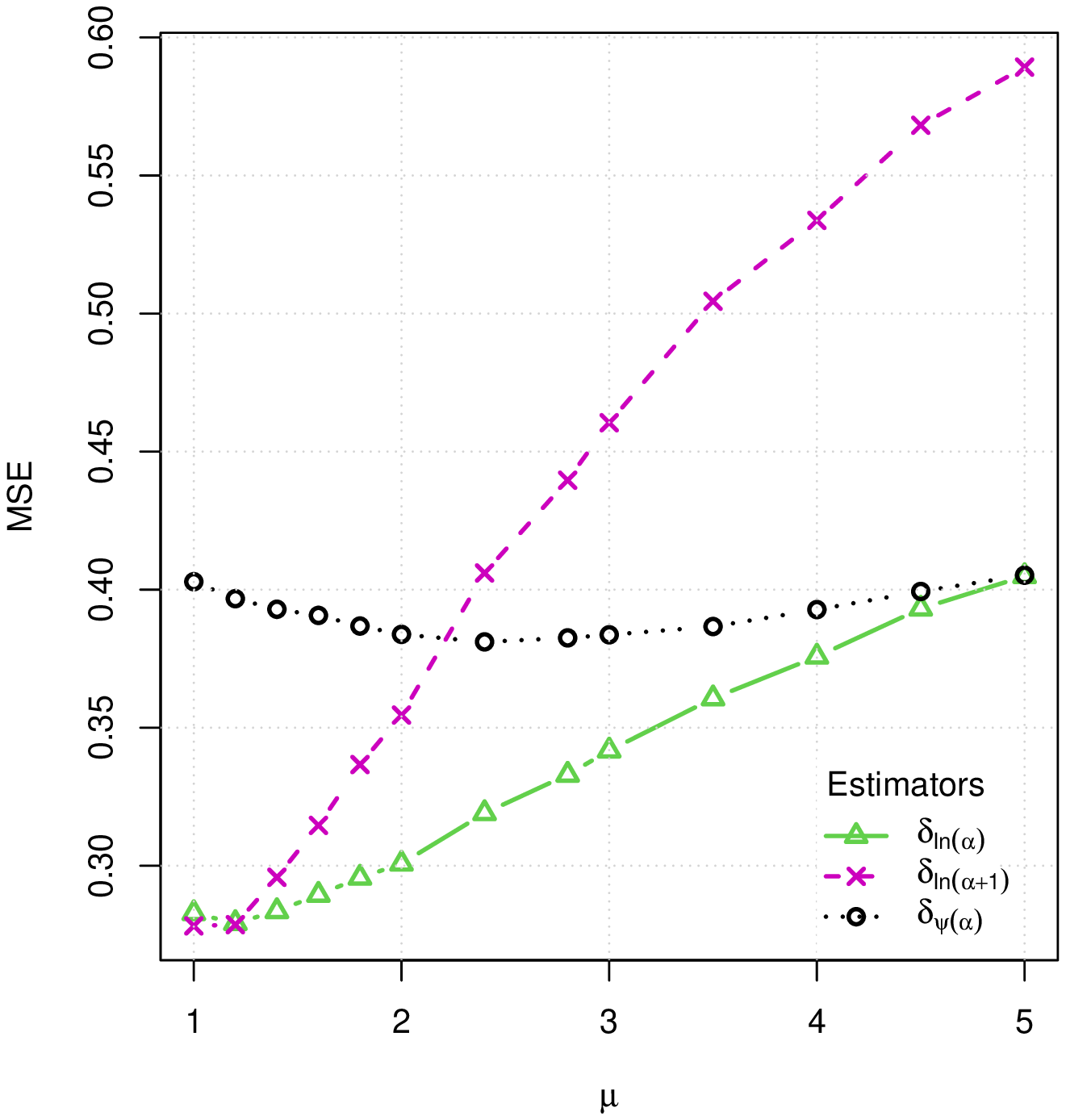}
			\caption{$n=5$, $\alpha=0.5$ }
		\end{subfigure}%
		\begin{subfigure}[b]{0.5\textwidth}
			\centering
			\includegraphics[height=2.3in,width=6.5cm]{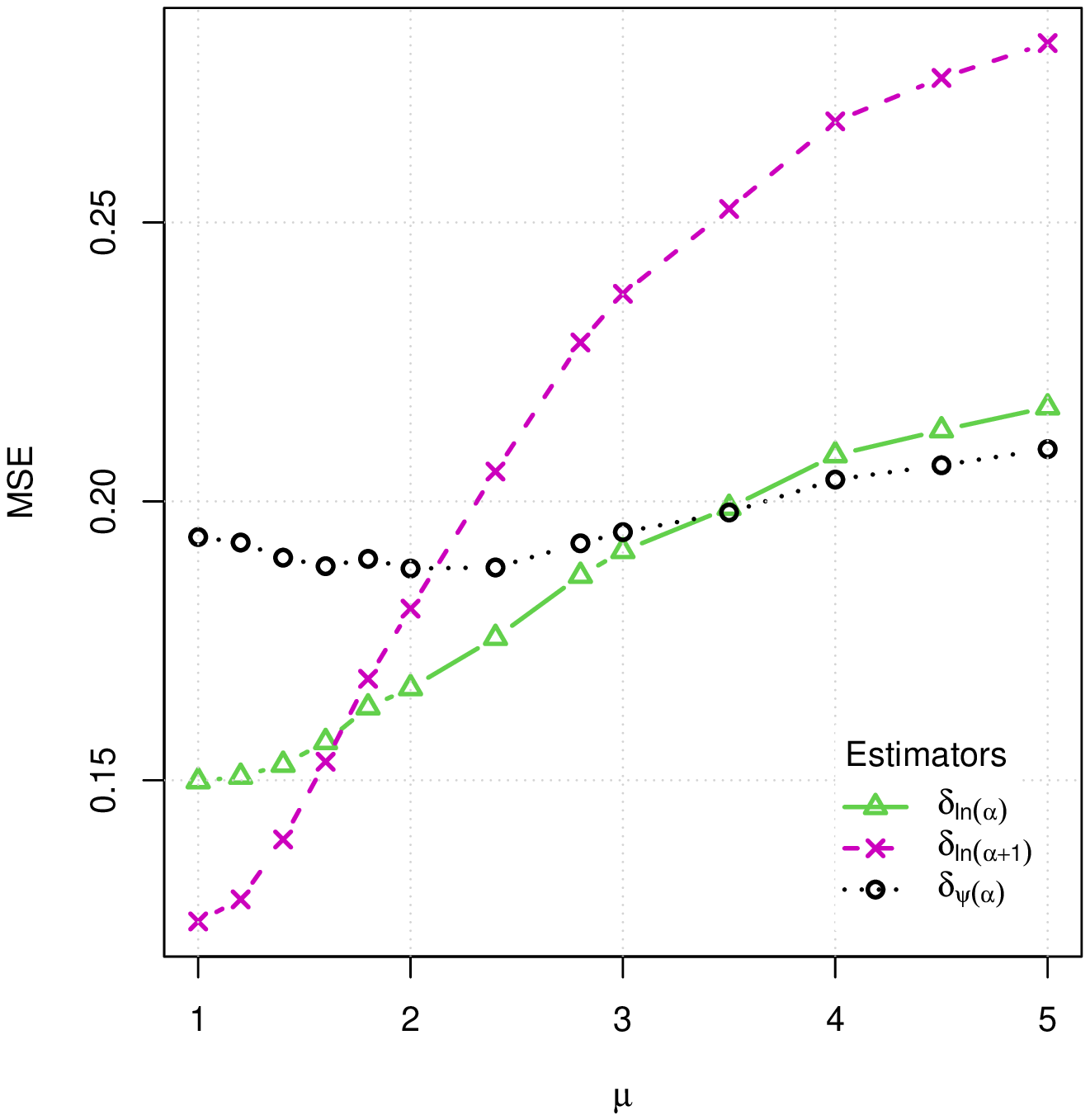}
			\caption{$n=10$, $\alpha=0.5$  }
		\end{subfigure}%
		
		\begin{subfigure}[b]{0.5\textwidth}
			\centering
			\includegraphics[height=2.3in,width=6.5cm]{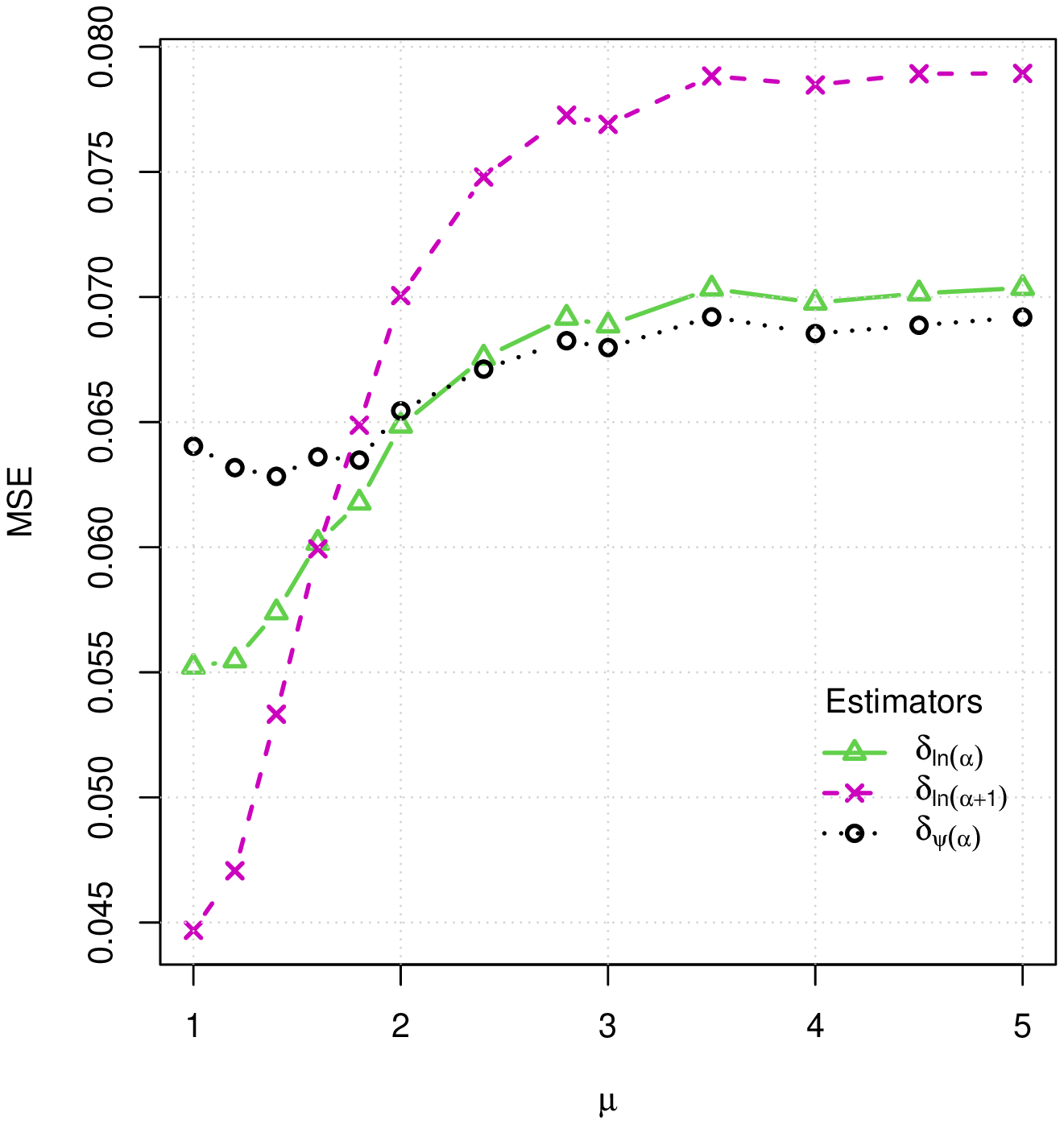}
			\caption{$n=15$, $\alpha=1$  }
		\end{subfigure}%
		\begin{subfigure}[b]{0.5\textwidth}
			\centering
			\includegraphics[height=2.3in,width=6.5cm]{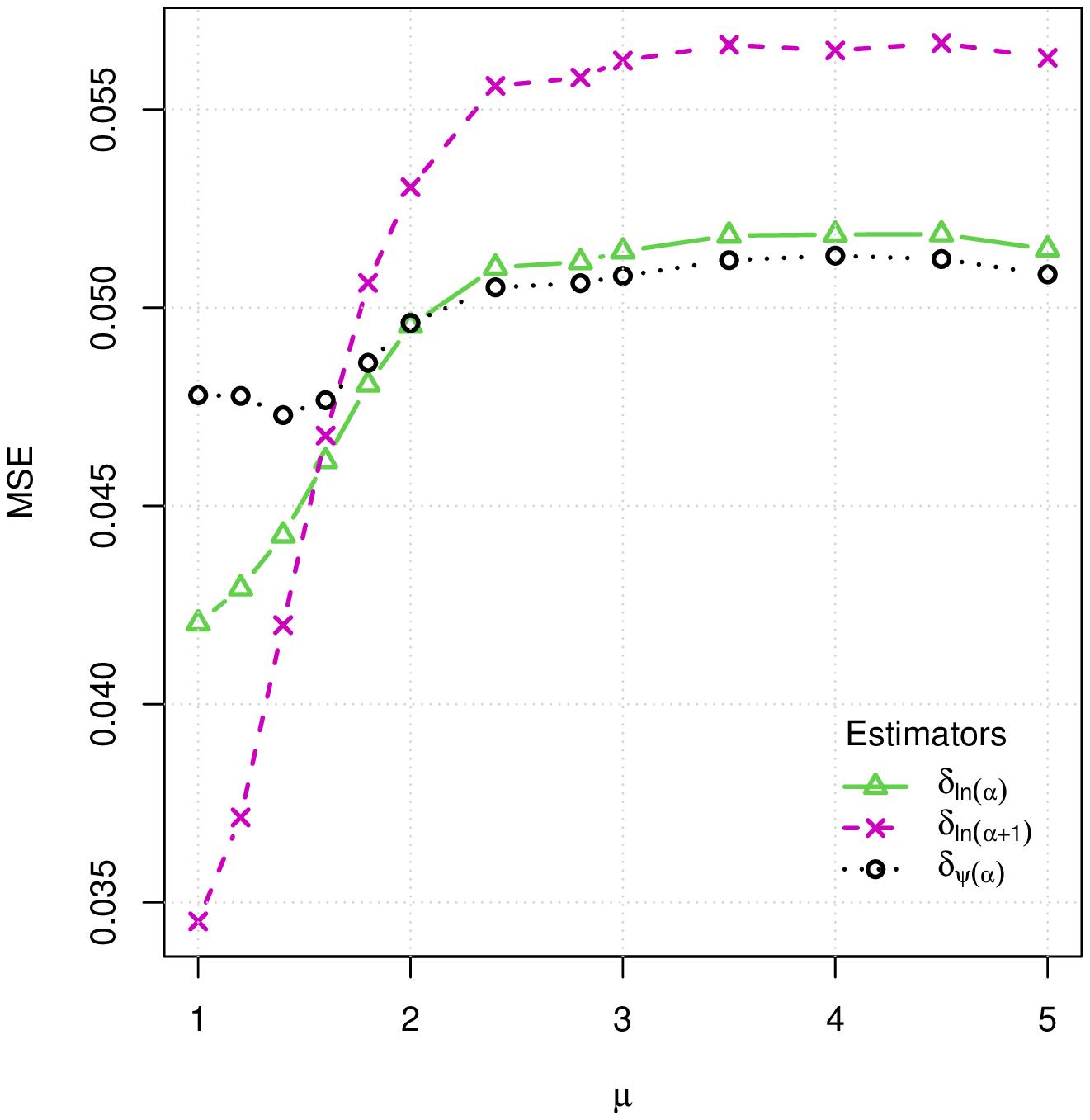}
			\caption{$n=20$, $\alpha=1.5$  }
		\end{subfigure}%
		
		\begin{subfigure}[b]{0.5\textwidth}
			\centering
			\includegraphics[height=2.3in,width=6.5cm]{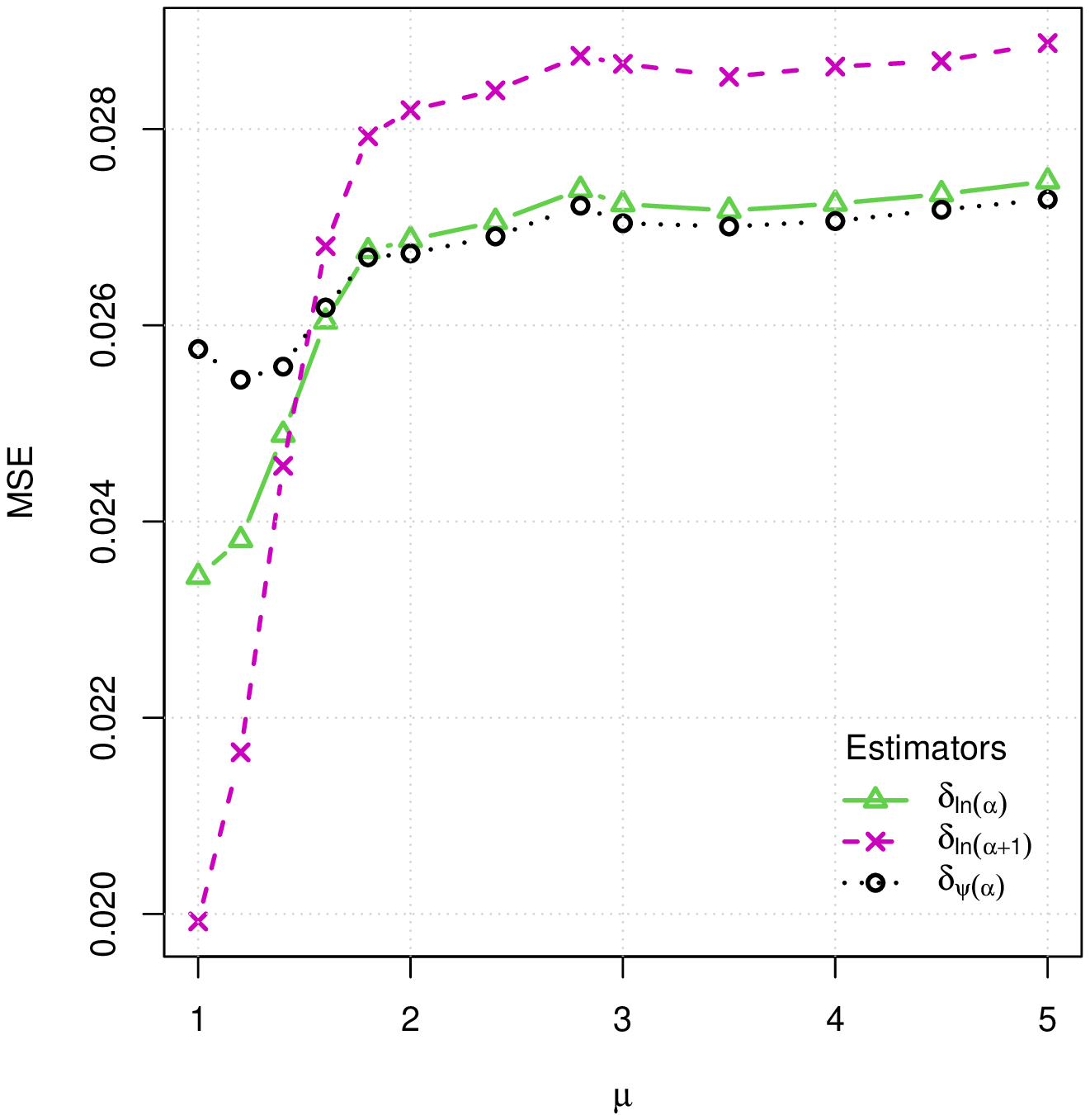}
			\caption{$n=25$, $\alpha=2$  }
		\end{subfigure}%
		\begin{subfigure}[b]{0.5\textwidth}
			\centering
			\includegraphics[height=2.3in,width=6.5cm]{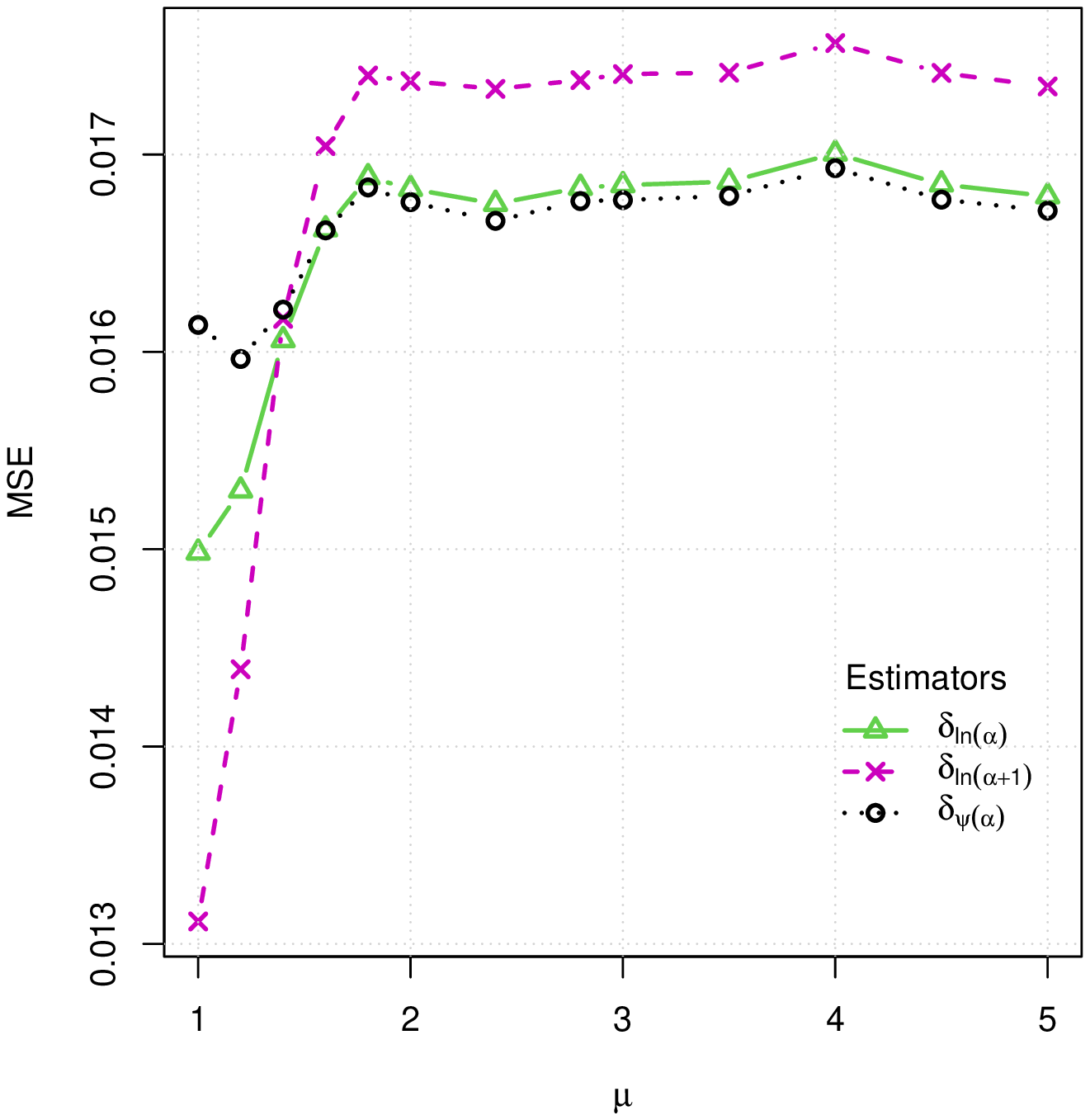}
			\caption{$n=30$, $\alpha=2.5$  }
		\end{subfigure}%
		
	\end{center}
	\caption{\textbf{Mean squared error plots of the three natural estimators ($\delta_{\ln (\alpha)}, \delta_{\ln (\alpha+1)}$ and $\delta_{\psi(\alpha)}$) of $H_S(\underline{\theta})$.}
	}%
	\label{fig:subfigures}
\end{figure}
\FloatBarrier
\FloatBarrier
\begin{figure}[ht!]
	\begin{center}
		\begin{subfigure}[b]{0.5\textwidth}
			\centering
			\includegraphics[height=2.3in,width=6.5cm]{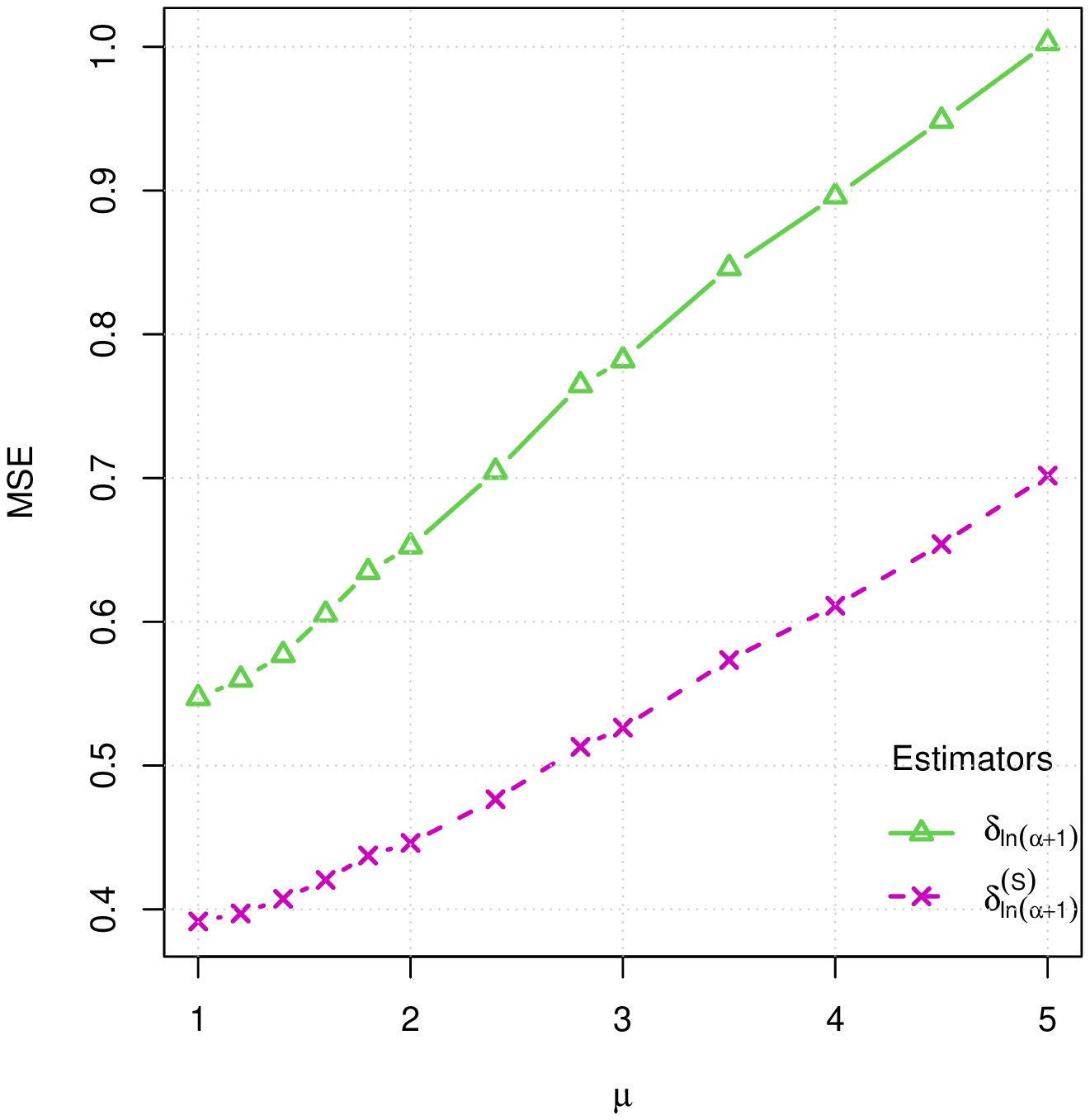}
			\caption{$n=5$, $\alpha=0.5$ }
		\end{subfigure}%
		\begin{subfigure}[b]{0.5\textwidth}
			\centering
			\includegraphics[height=2.3in,width=6.5cm]{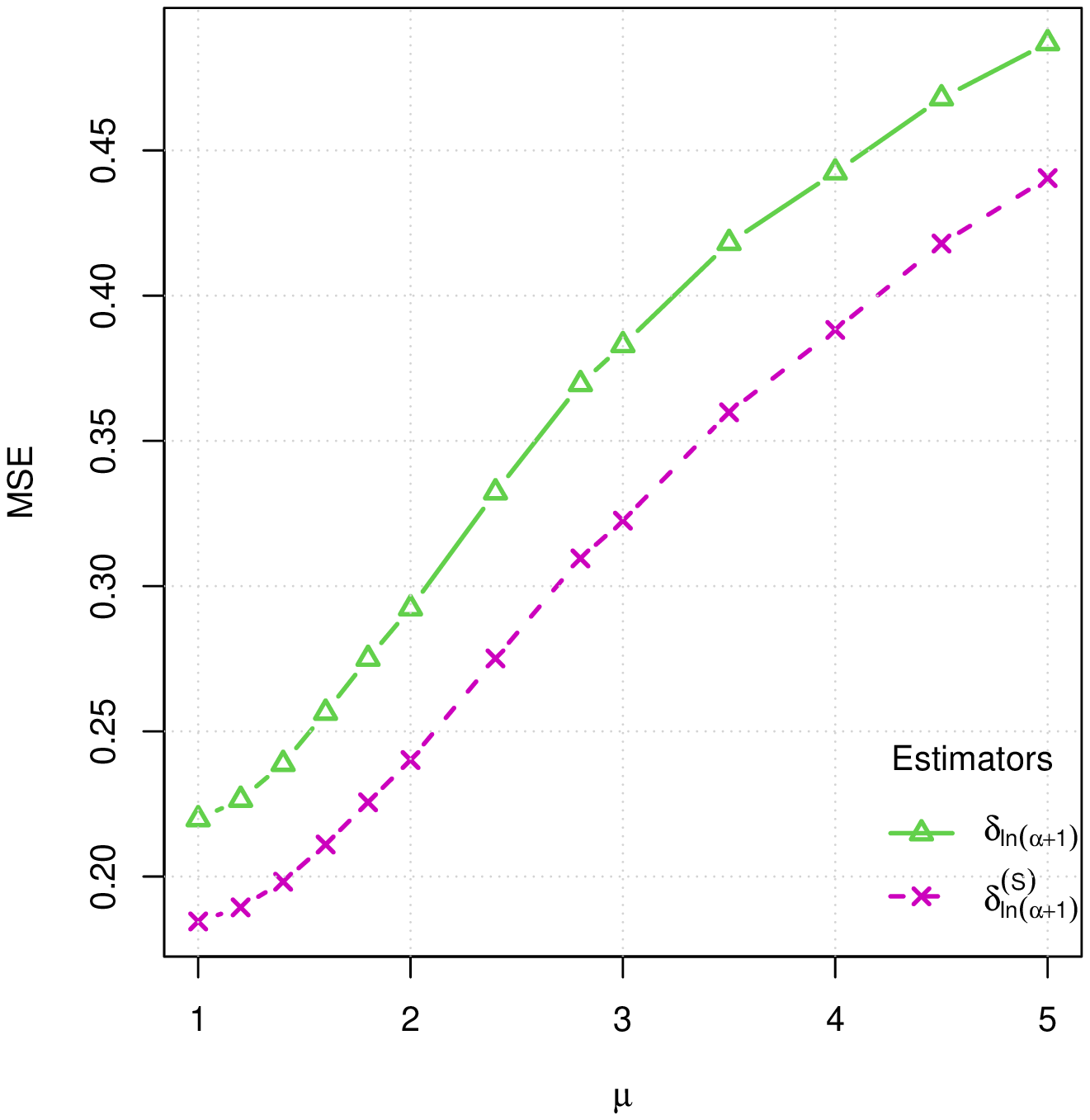}
			\caption{$n=10$, $\alpha=0.5$  }
		\end{subfigure}%
		
		\begin{subfigure}[b]{0.5\textwidth}
			\centering
			\includegraphics[height=2.3in,width=6.5cm]{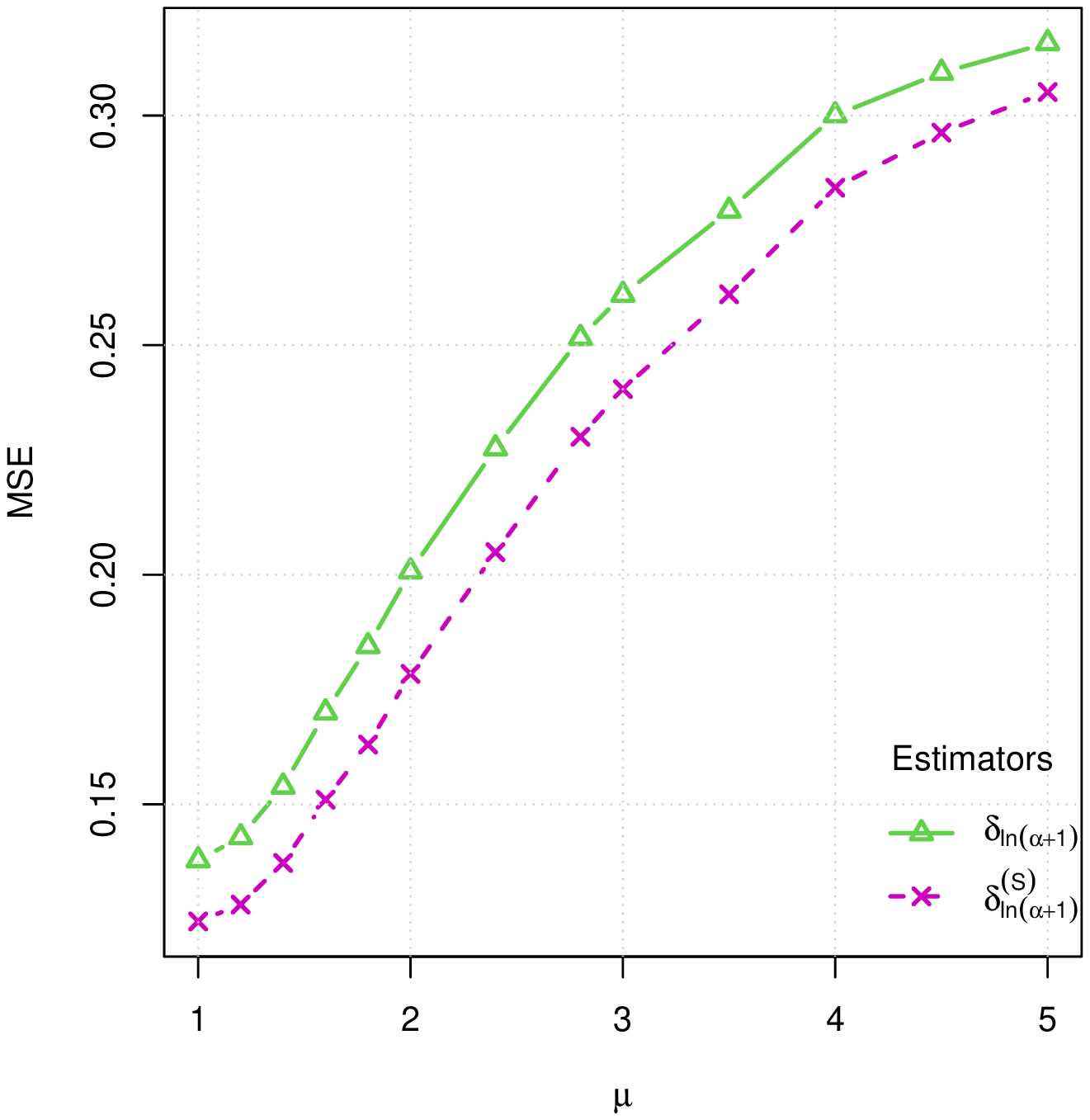}
			\caption{$n=15$, $\alpha=1$  }
		\end{subfigure}%
		\begin{subfigure}[b]{0.5\textwidth}
			\centering
			\includegraphics[height=2.3in,width=6.5cm]{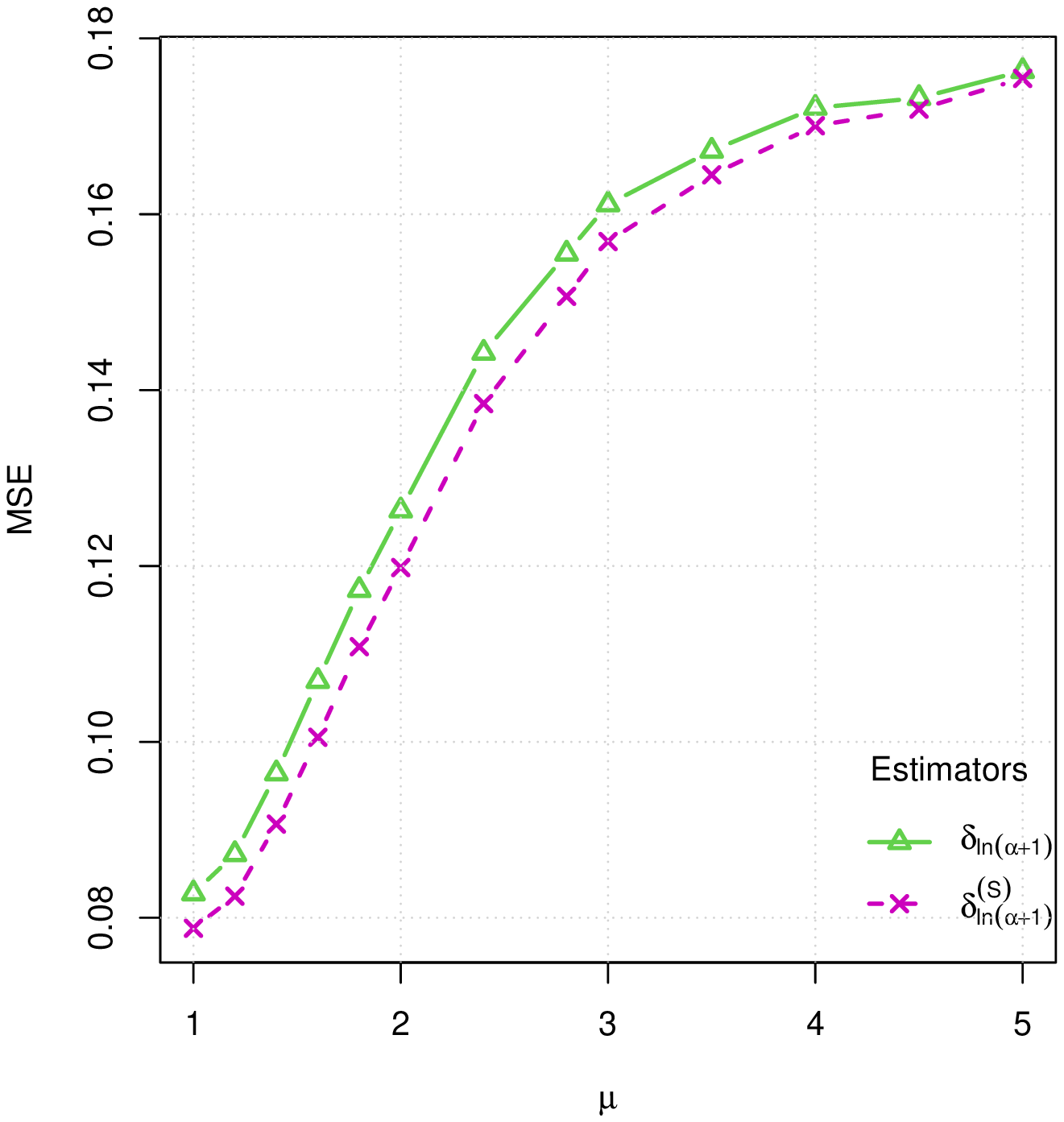}
			\caption{$n=20$, $\alpha=1.5$  }
		\end{subfigure}%

		\begin{subfigure}[b]{0.5\textwidth}
			\centering
			\includegraphics[height=2.3in,width=6.5cm]{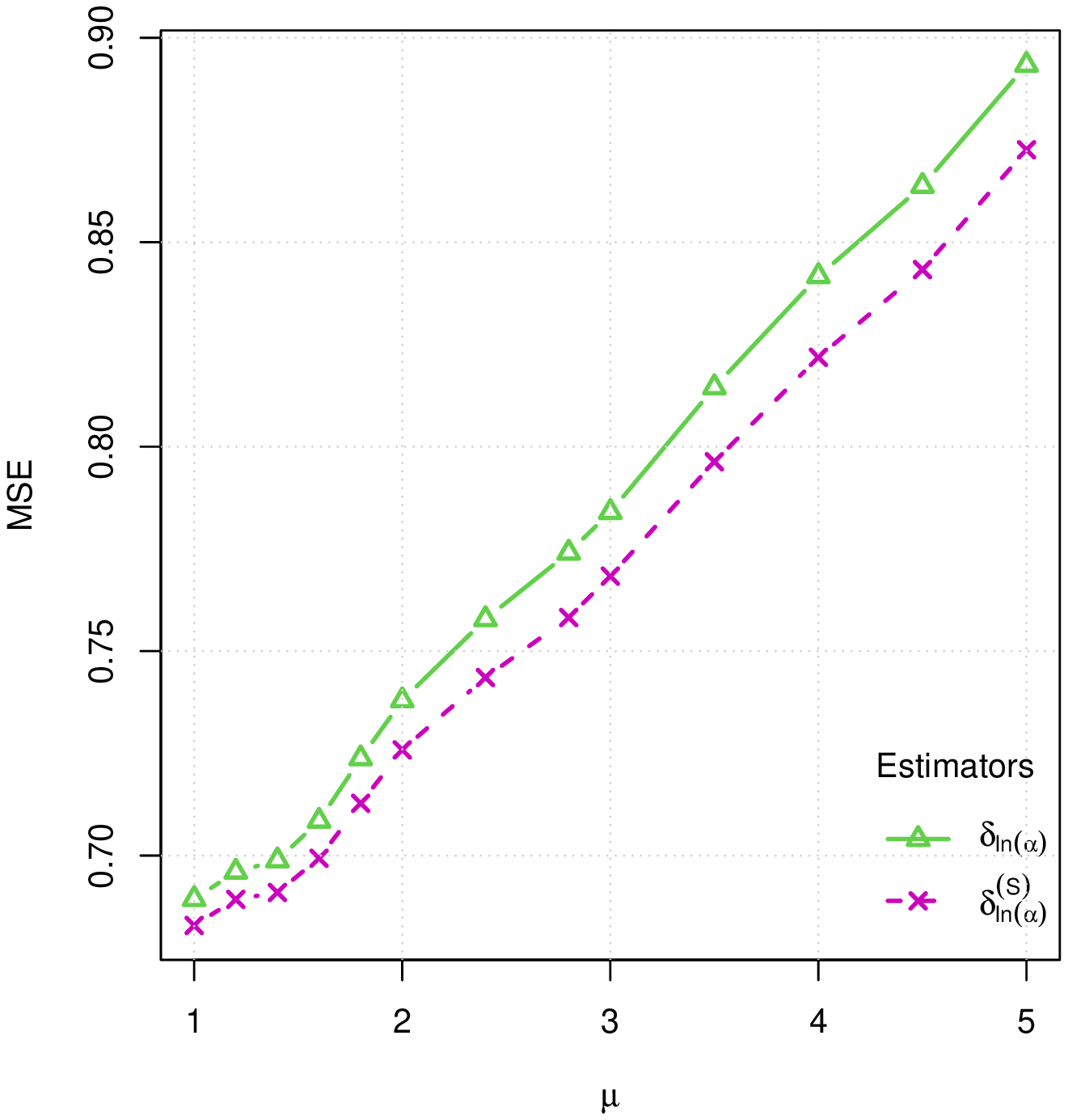}
			\caption{$n=5$, $\alpha=0.5$  }
		\end{subfigure}%
		\begin{subfigure}[b]{0.5\textwidth}
			\centering
			\includegraphics[height=2.3in,width=6.5cm]{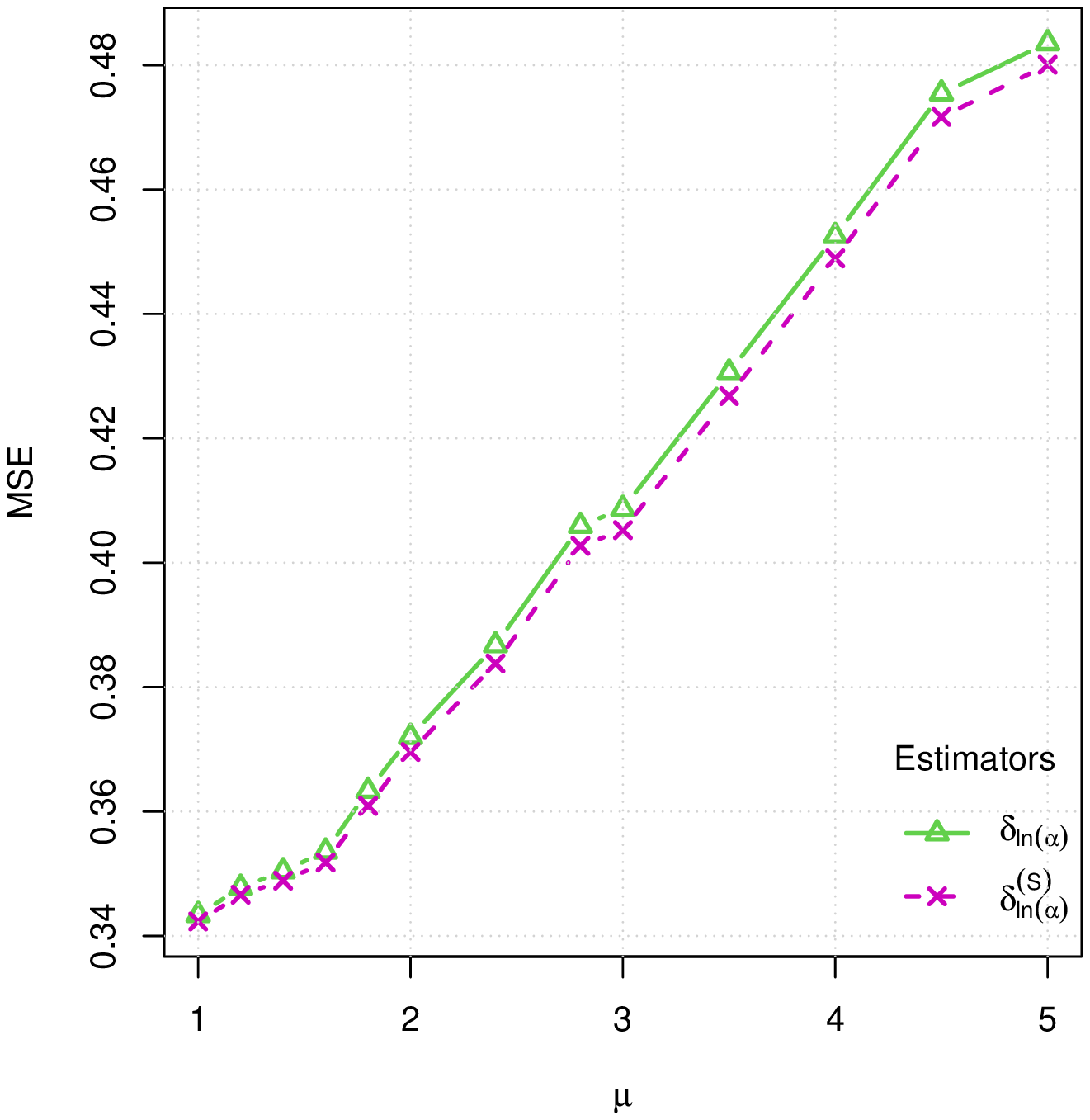}
			\caption{$n=10$, $\alpha=1$  }
		\end{subfigure}%
		
	\end{center}
	\caption{\textbf{Mean Squared error plots of the two naive estimators and their improved versions $\delta_{\ln (\alpha)},\delta_{\ln (\alpha+1)},\delta^{(S)}_{\ln (\alpha)}$ and $\delta^{(S)}_{\ln (\alpha+1)}$   of $H_S(\underline{\theta})$.}
	}%
	\label{fig:subfigures}
\end{figure}
\FloatBarrier
\FloatBarrier
\FloatBarrier
\begin{figure}[ht!]
	\begin{center}
		\begin{subfigure}[b]{0.5\textwidth}
			\centering
			\includegraphics[height=2.3in,width=6.5cm]{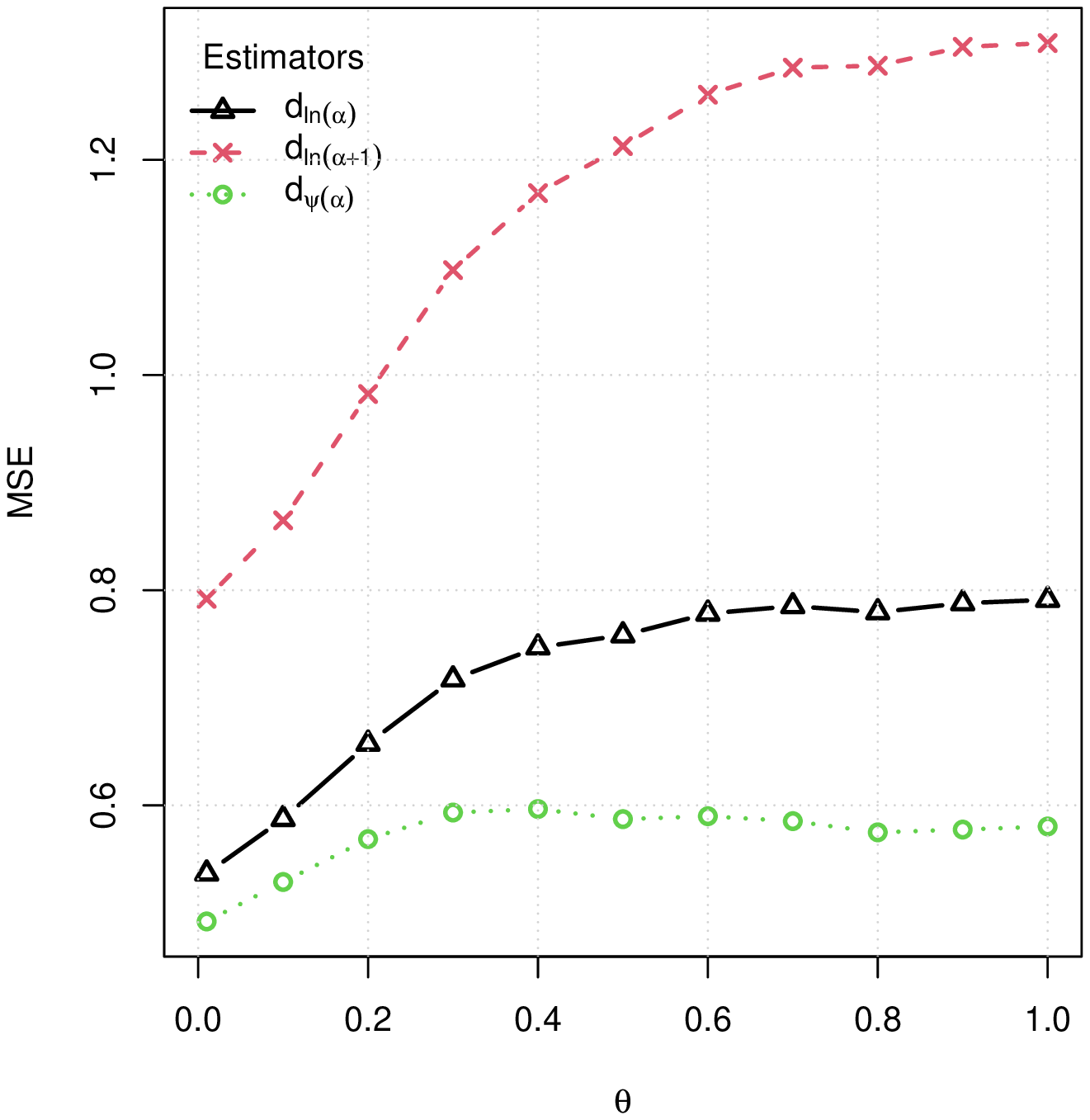}
			\caption{$n=5$, $\alpha=0.5$ }
		\end{subfigure}%
		\begin{subfigure}[b]{0.5\textwidth}
			\centering
			\includegraphics[height=2.3in,width=6.5cm]{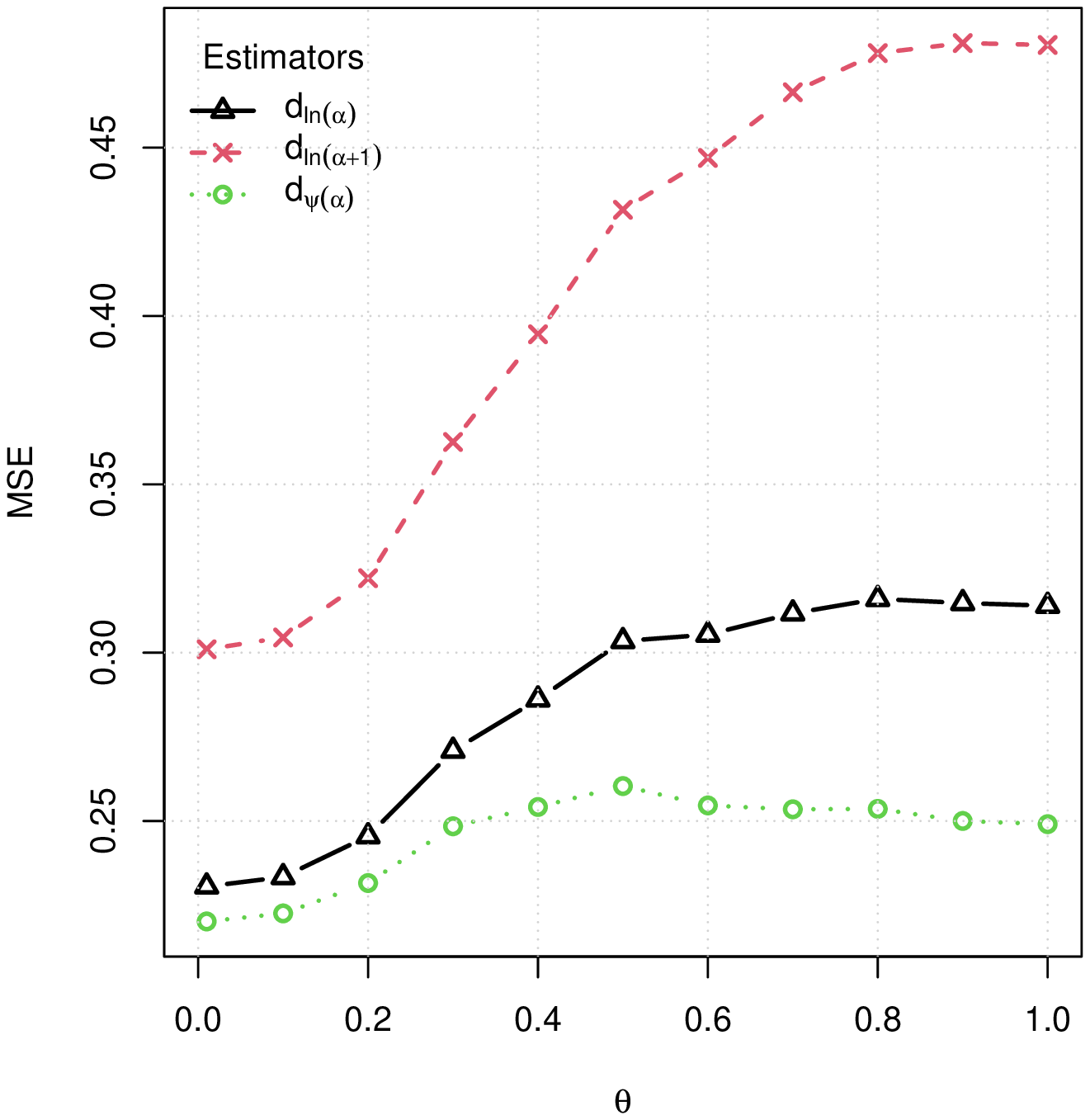}
			\caption{$n=10$, $\alpha=0.5$  }
		\end{subfigure}%
		
		\begin{subfigure}[b]{0.5\textwidth}
			\centering
			\includegraphics[height=2.3in,width=6.5cm]{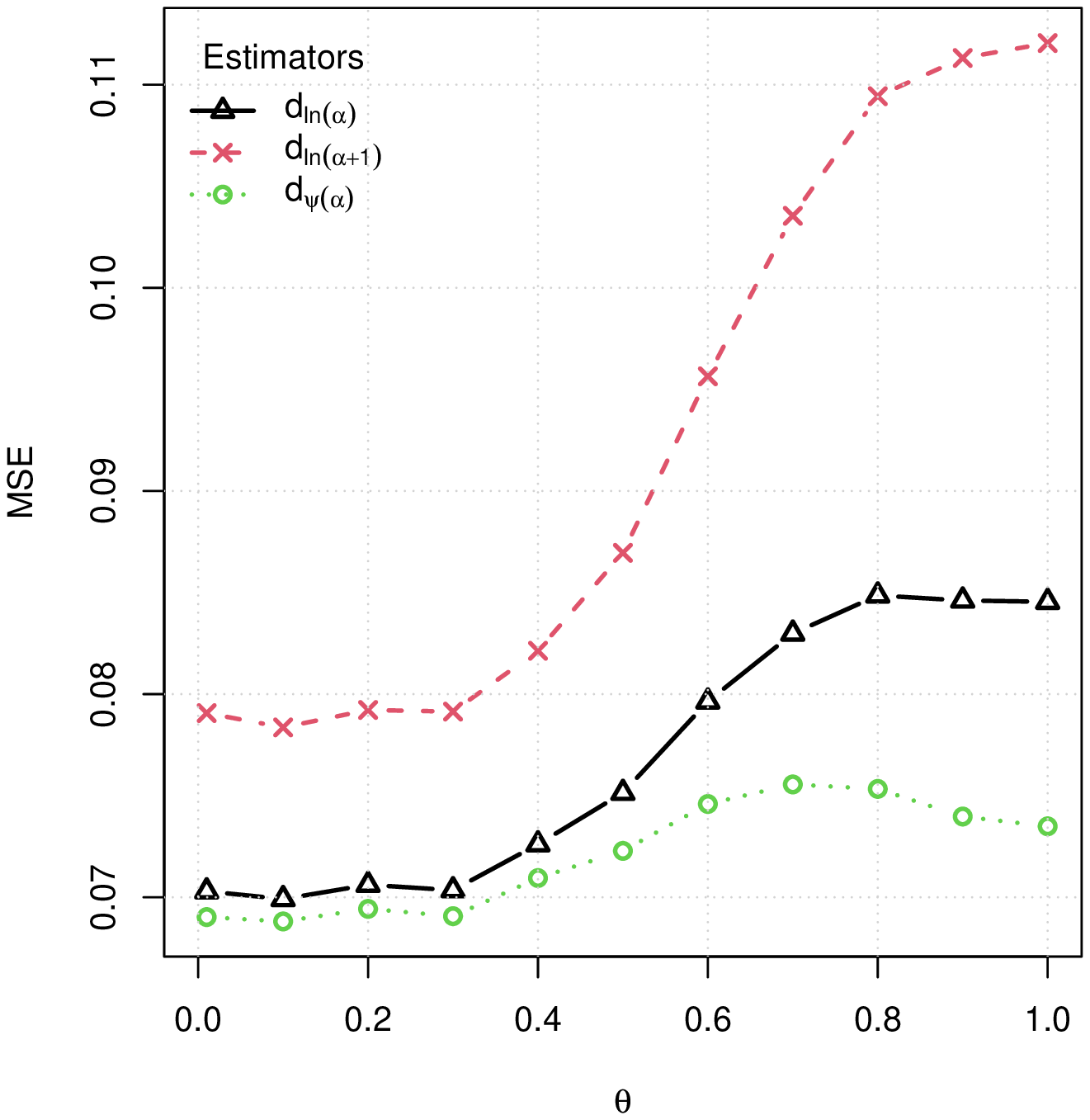}
			\caption{$n=15$, $\alpha=0.5$  }
		\end{subfigure}%
		\begin{subfigure}[b]{0.5\textwidth}
			\centering
			\includegraphics[height=2.3in,width=6.5cm]{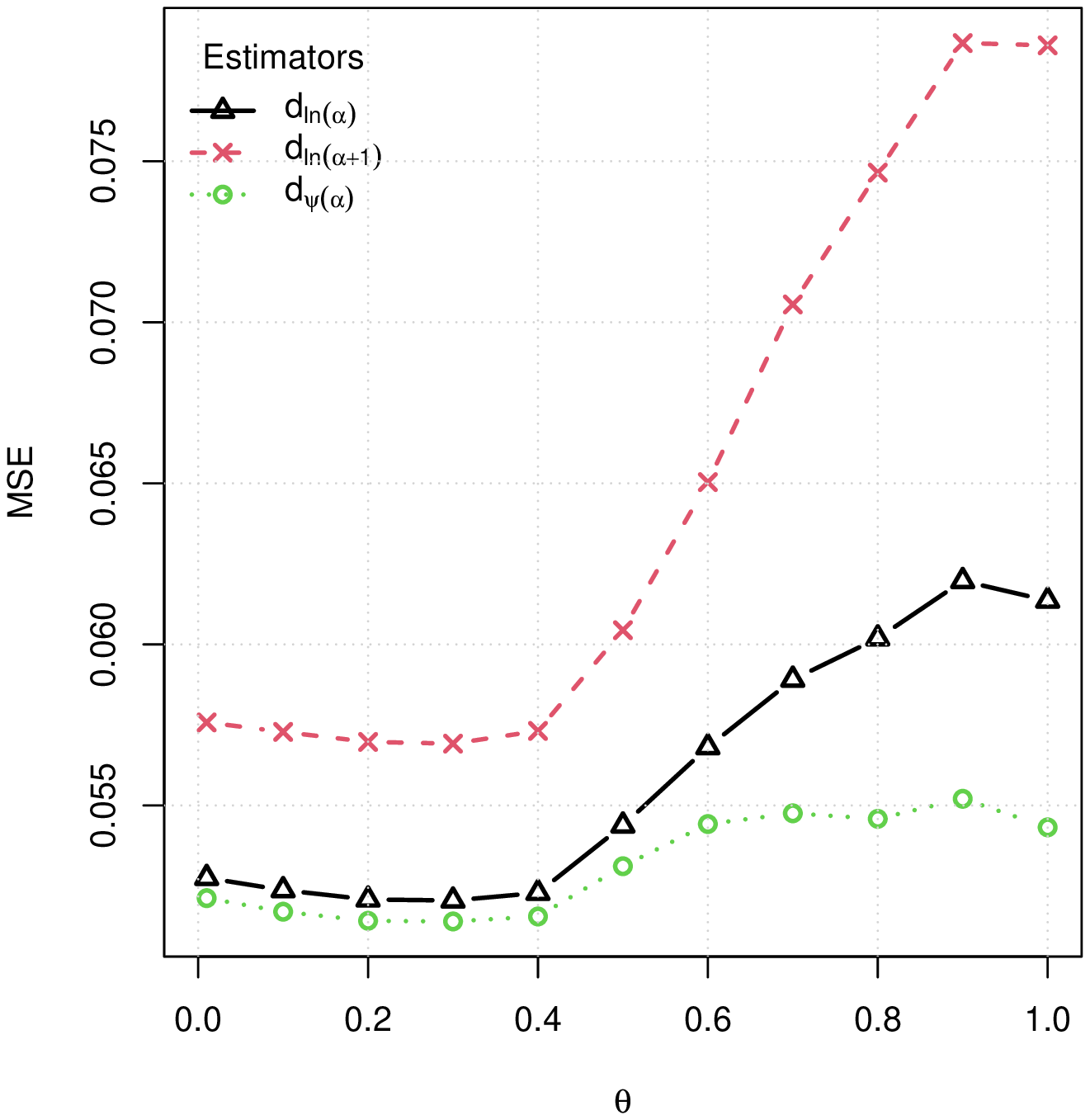}
			\caption{$n=20$, $\alpha=1$  }
		\end{subfigure}%

		\begin{subfigure}[b]{0.5\textwidth}
			\centering
			\includegraphics[height=2.3in,width=6.5cm]{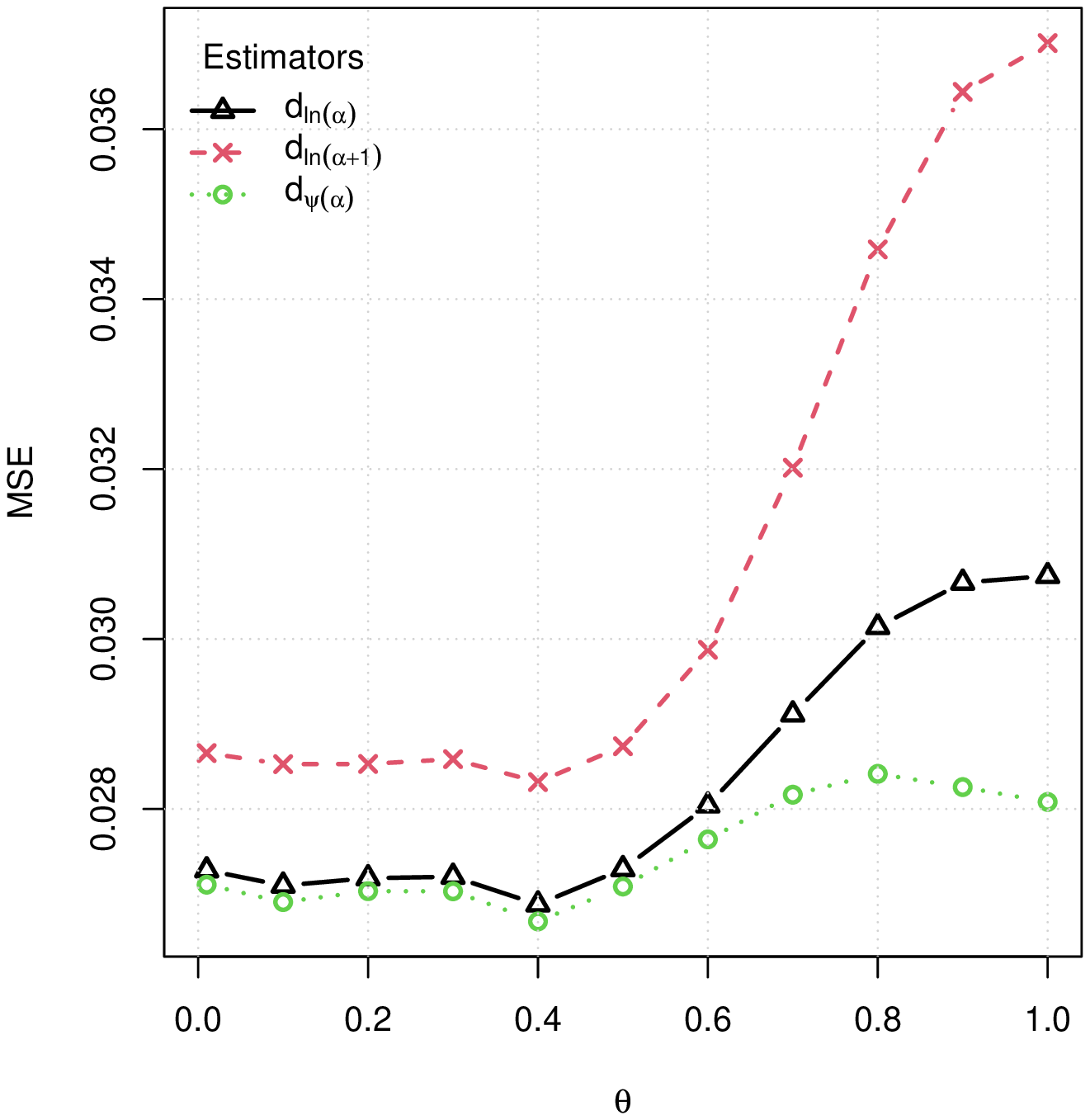}
			\caption{$n=25$, $\alpha=1$  }
		\end{subfigure}%
		\begin{subfigure}[b]{0.5\textwidth}
			\centering
			\includegraphics[height=2.3in,width=6.5cm]{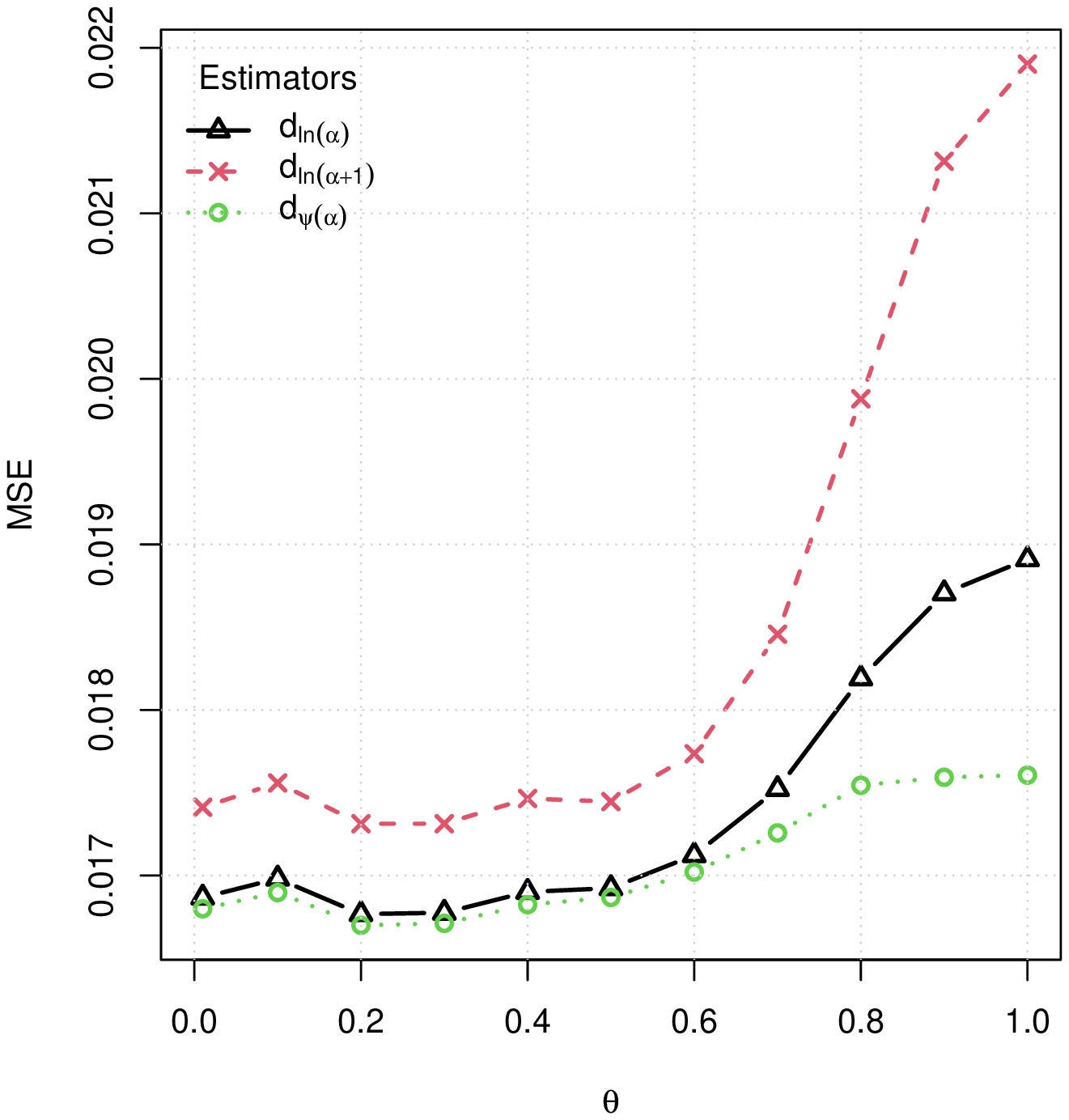}
			\caption{$n=30$, $\alpha=2$  }
		\end{subfigure}%
		
	\end{center}
	\caption{\textbf{Mean squared error plots of the three naive estimators $d_{\ln \alpha}$, $d_{\ln (\alpha+1)}$ and $d_{\psi(\alpha)}$ of $H_M(\underline{\theta})$.}
	}%
	\label{fig:subfigures}
\end{figure}
\FloatBarrier
\begin{figure}[ht!]
	\begin{center}
		\begin{subfigure}[b]{0.5\textwidth}
			\centering
			\includegraphics[height=2.3in,width=6.5cm]{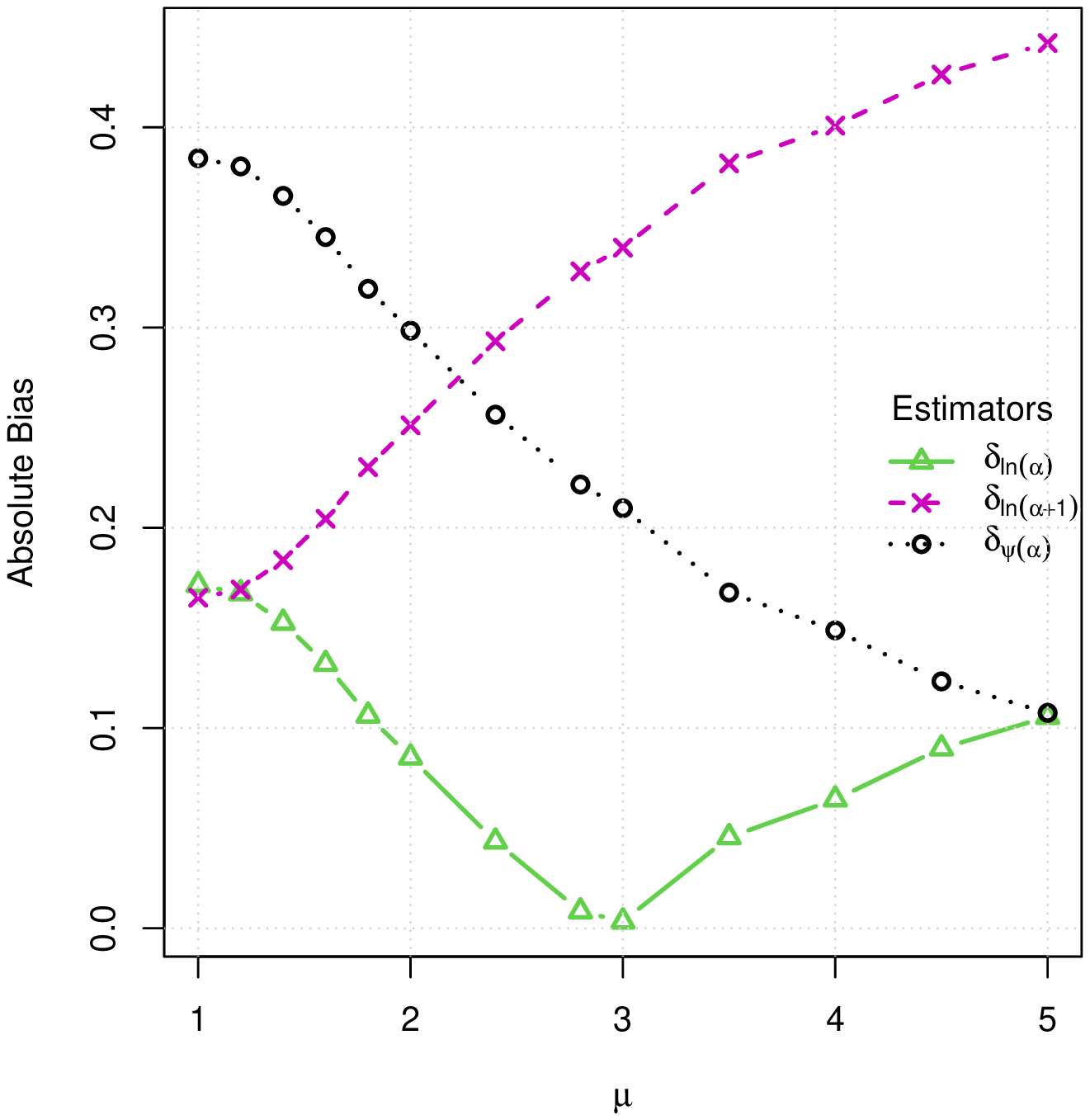}
			\caption{$n=5$, $\alpha=0.5$ }
		\end{subfigure}%
		\begin{subfigure}[b]{0.5\textwidth}
			\centering
			\includegraphics[height=2.3in,width=6.5cm]{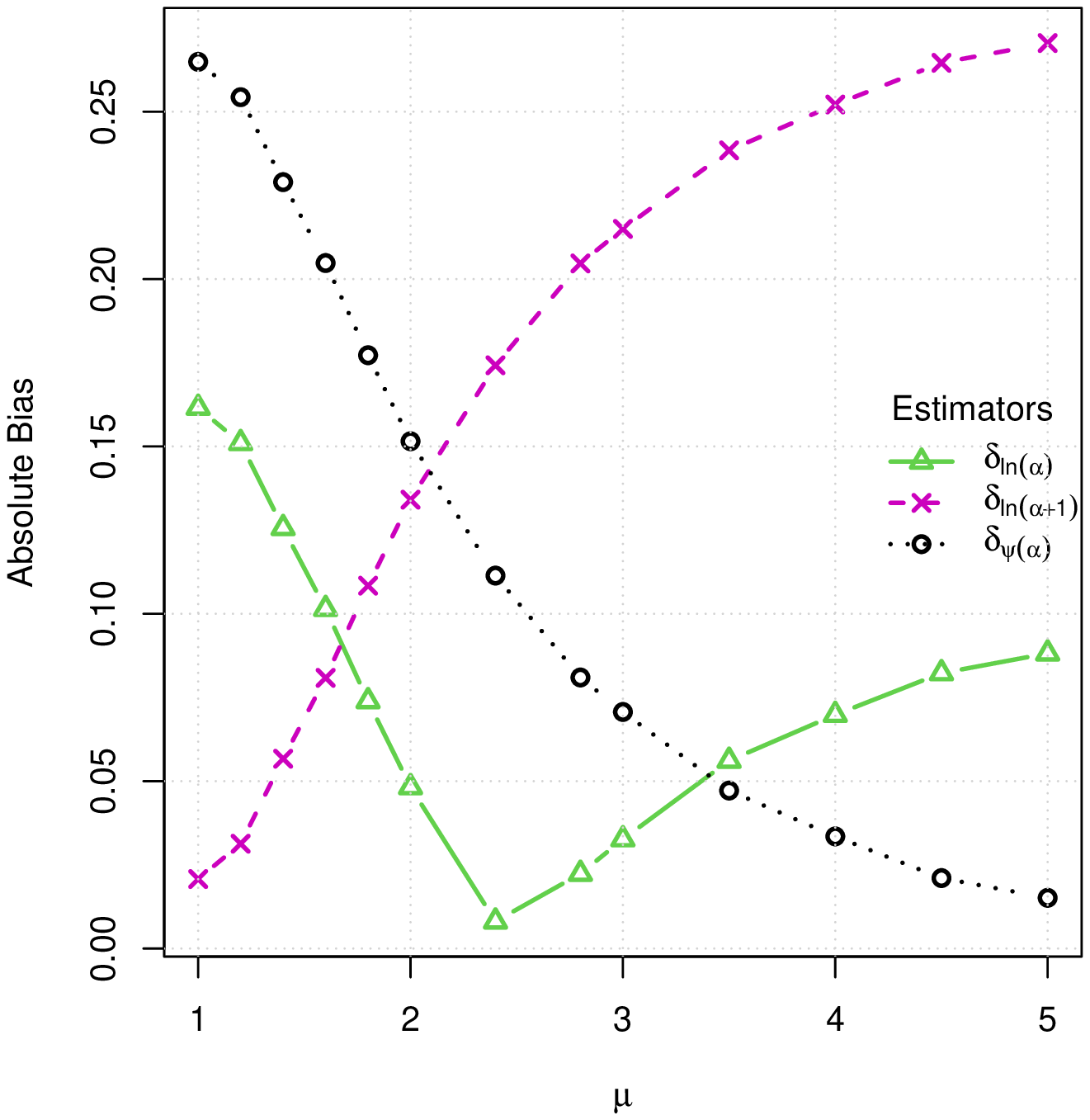}
			\caption{$n=10$, $\alpha=0.5$  }
		\end{subfigure}%
		
		\begin{subfigure}[b]{0.5\textwidth}
			\centering
			\includegraphics[height=2.3in,width=6.5cm]{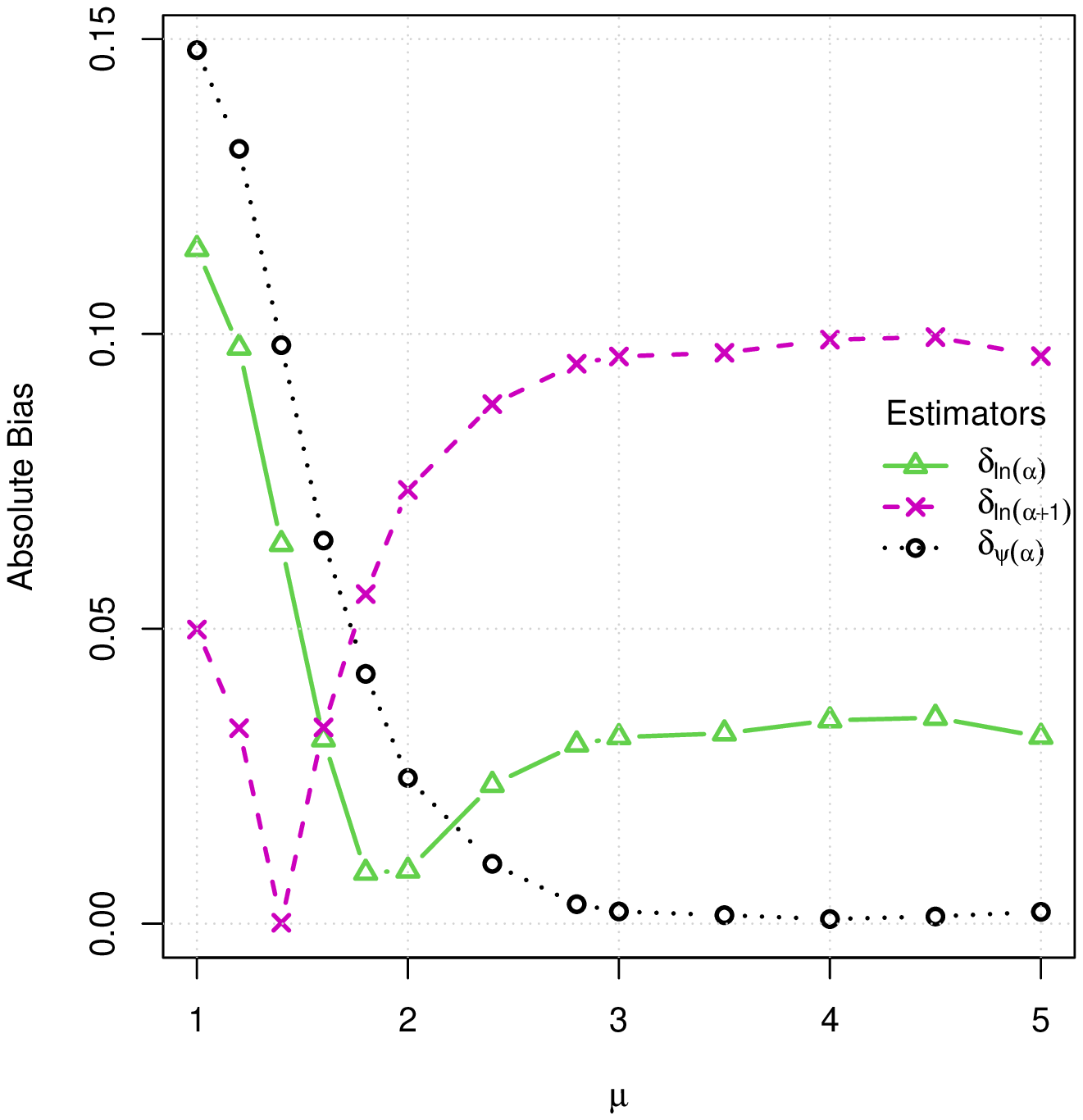}
			\caption{$n=15$, $\alpha=0.5$  }
		\end{subfigure}%
		\begin{subfigure}[b]{0.5\textwidth}
			\centering
			\includegraphics[height=2.3in,width=6.5cm]{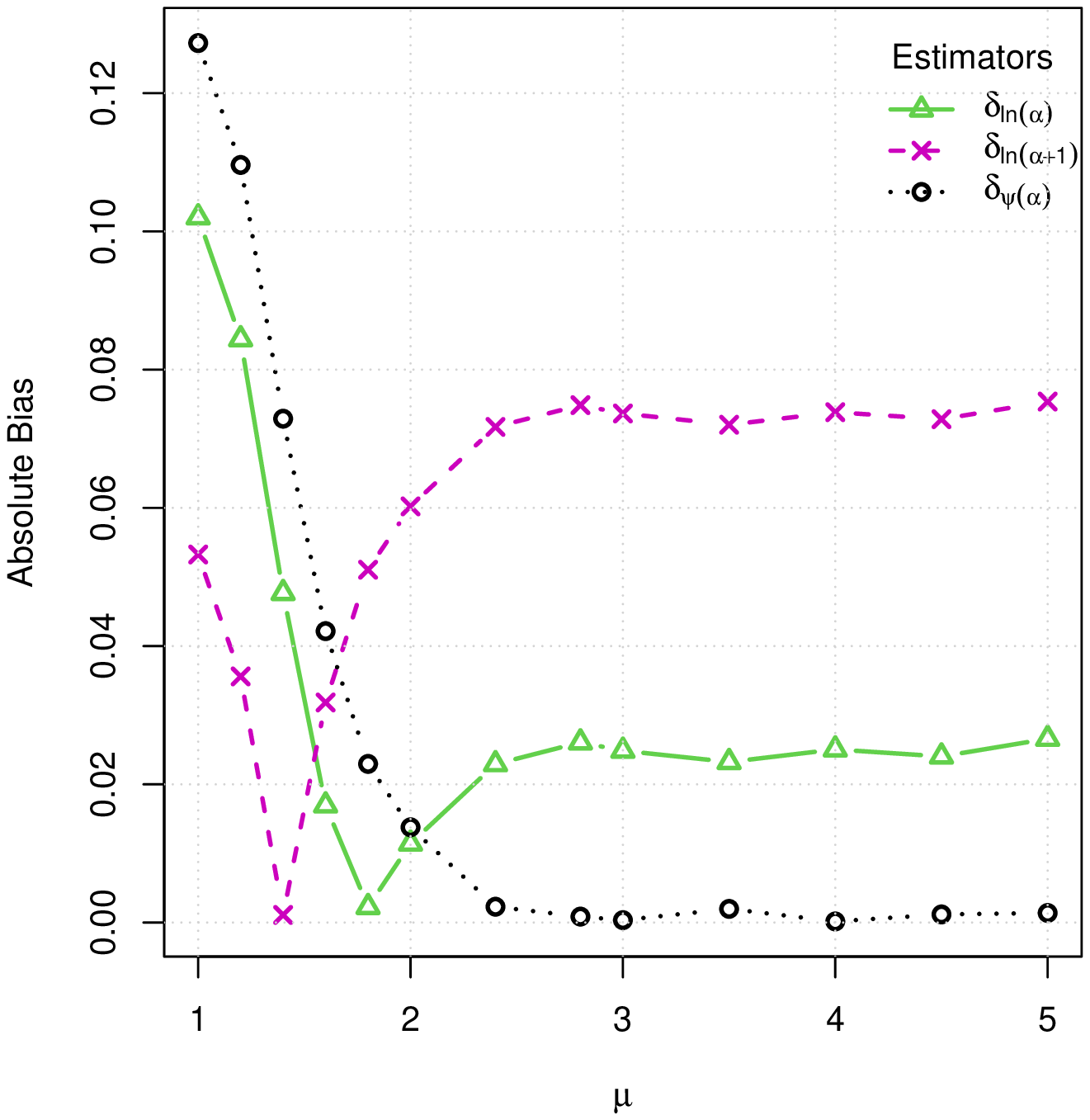}
			\caption{$n=20$, $\alpha=1$  }
		\end{subfigure}%

		\begin{subfigure}[b]{0.5\textwidth}
			\centering
			\includegraphics[height=2.3in,width=6.5cm]{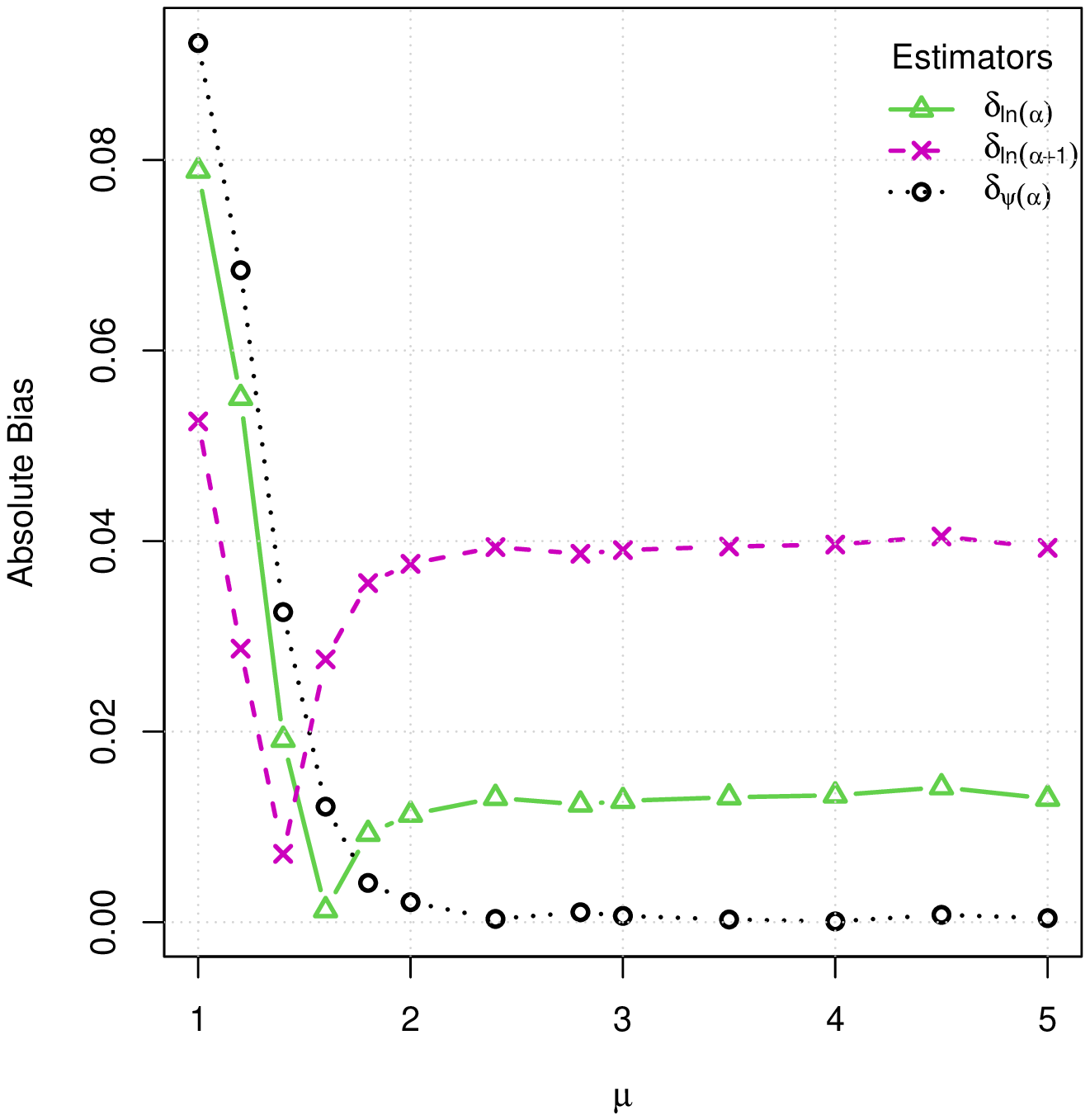}
			\caption{$n=25$, $\alpha=1$  }
		\end{subfigure}%
		\begin{subfigure}[b]{0.5\textwidth}
			\centering
			\includegraphics[height=2.3in,width=6.5cm]{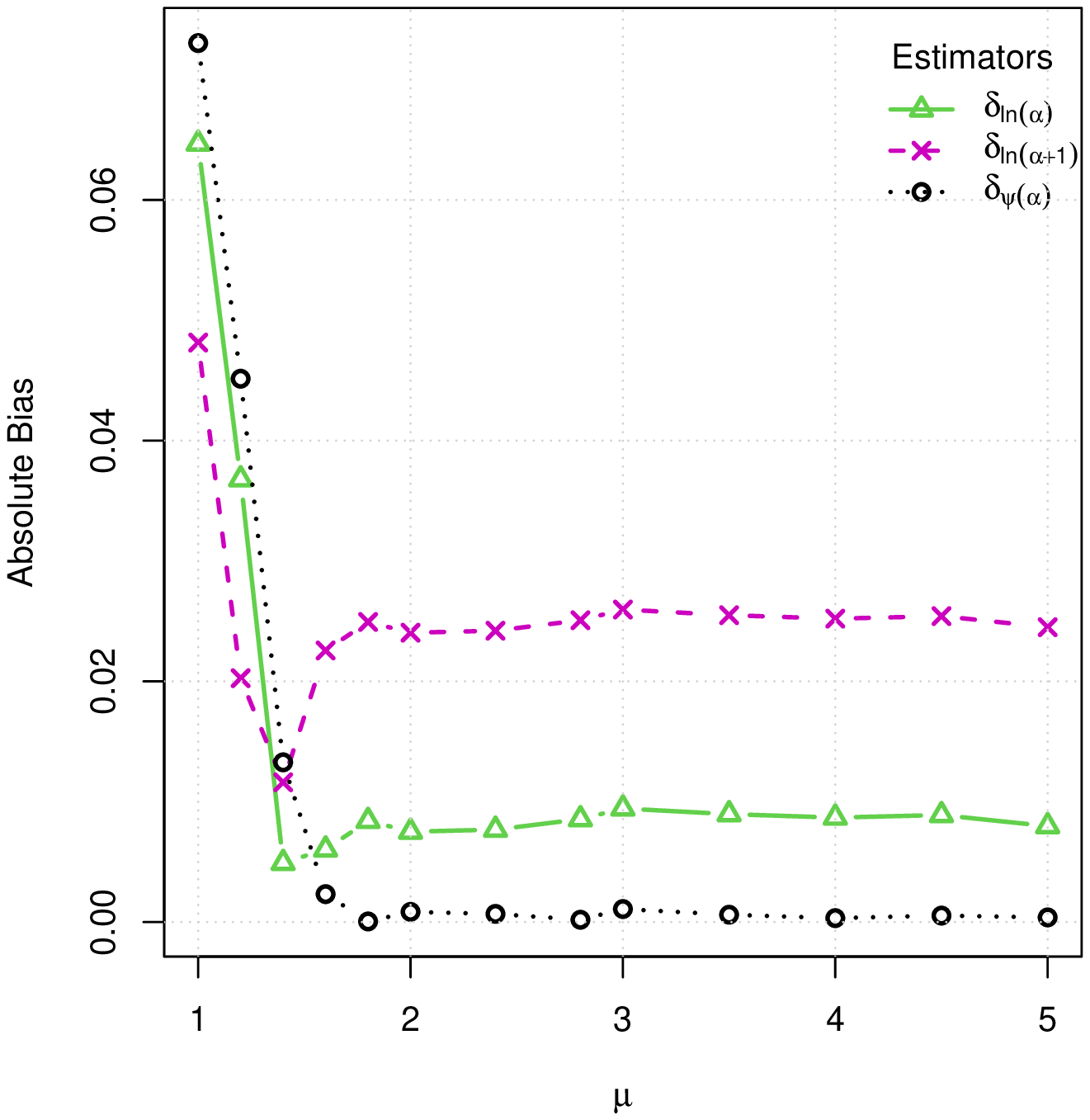}
			\caption{$n=30$, $\alpha=2$  }
		\end{subfigure}%
		
	\end{center}
	\caption{\textbf{Absolute Bias plots of the three naive estimators $\delta_{\ln (\alpha)},\delta_{\ln (\alpha+1)}$ and $\delta_{\psi(\alpha)}$) of $H_S(\underline{\theta})$.}
	}%
	\label{fig:subfigures}
\end{figure}
\FloatBarrier

\FloatBarrier
\begin{figure}[ht!]
	\begin{center}
		\begin{subfigure}[b]{0.5\textwidth}
			\centering
			\includegraphics[height=2.3in,width=6.5cm]{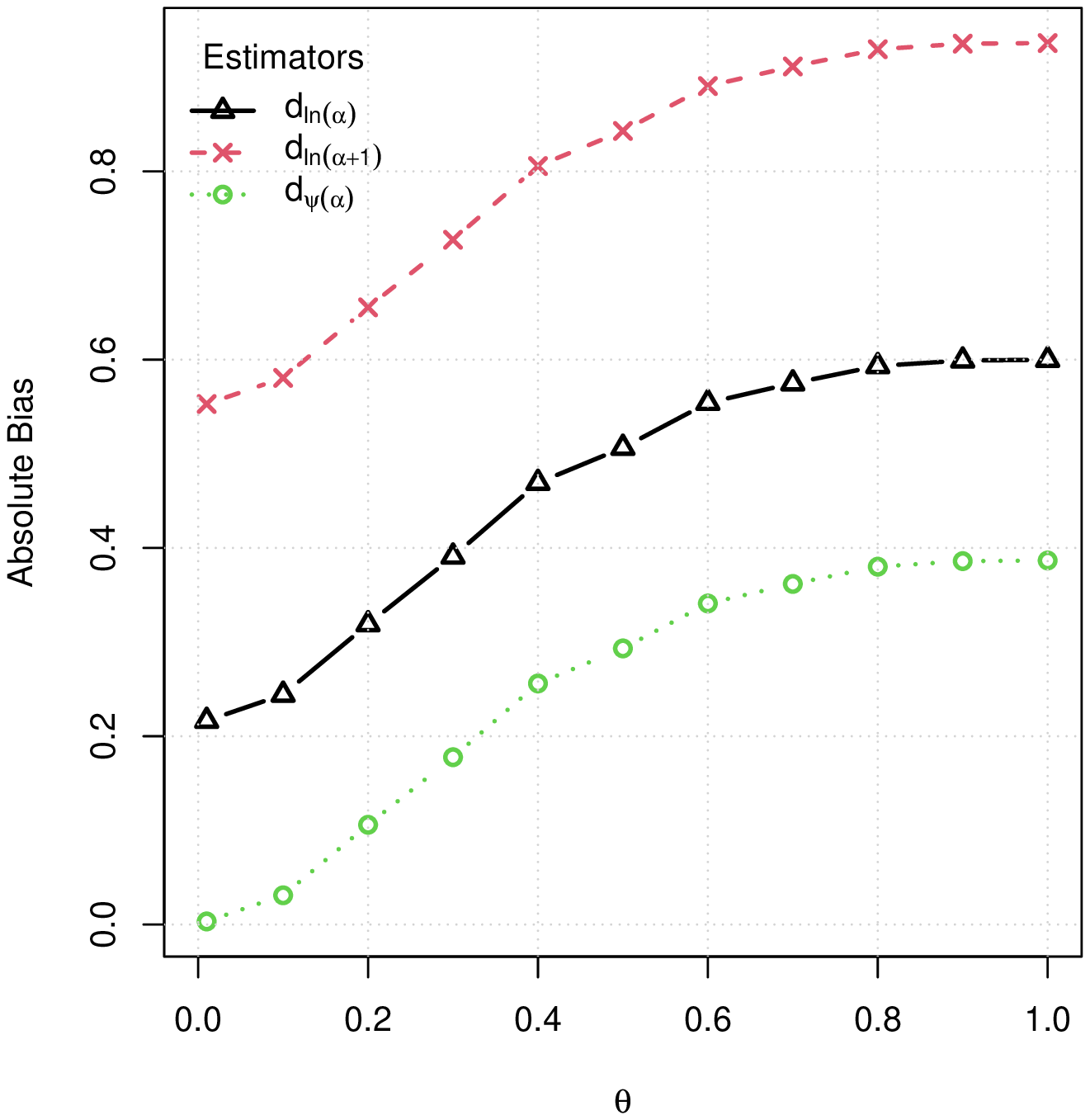}
			\caption{$n=5$, $\alpha=0.5$ }
		\end{subfigure}%
		\begin{subfigure}[b]{0.5\textwidth}
			\centering
			\includegraphics[height=2.3in,width=6.5cm]{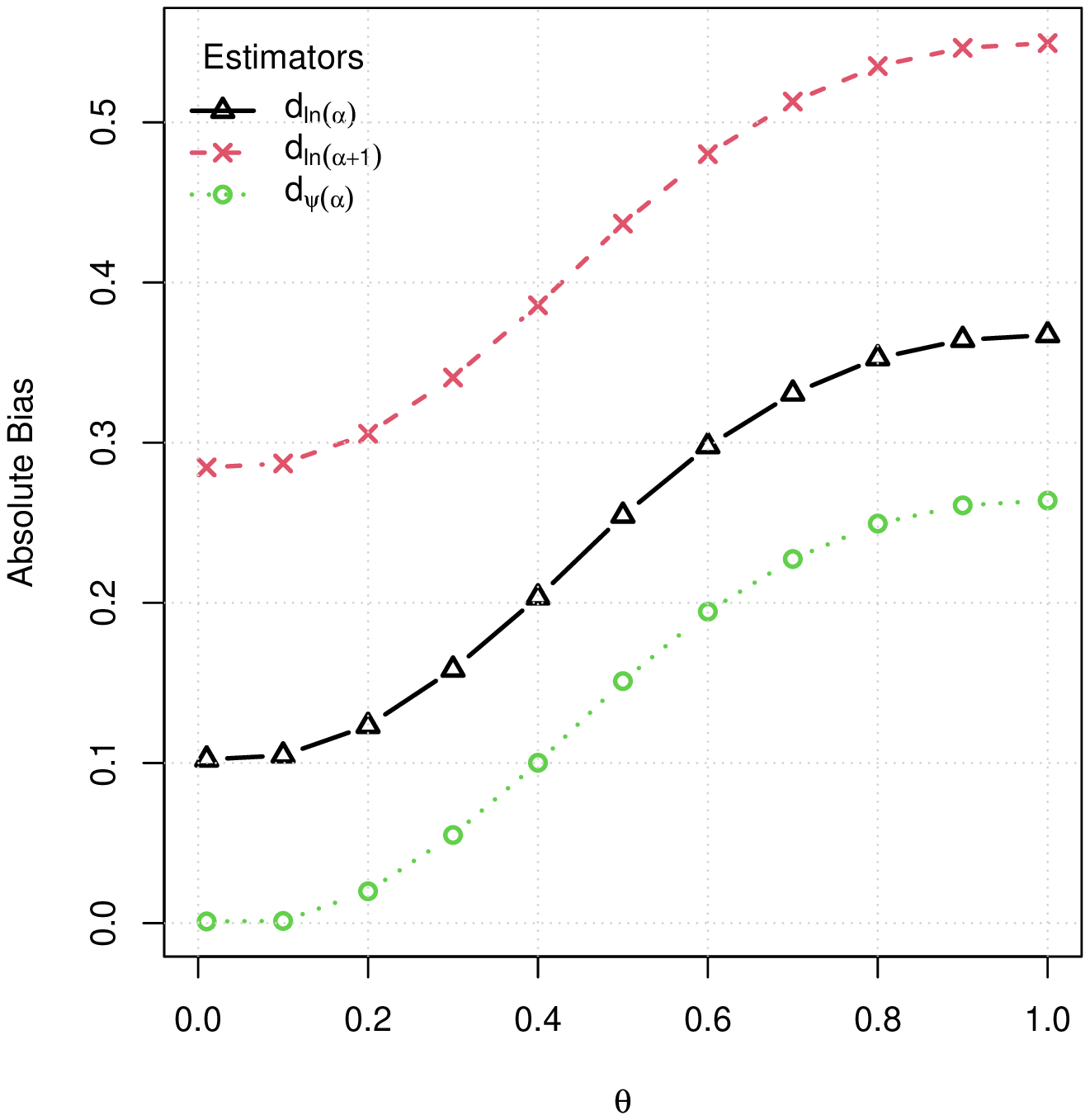}
			\caption{$n=10$, $\alpha=0.5$  }
		\end{subfigure}%
		
		\begin{subfigure}[b]{0.5\textwidth}
			\centering
			\includegraphics[height=2.3in,width=6.5cm]{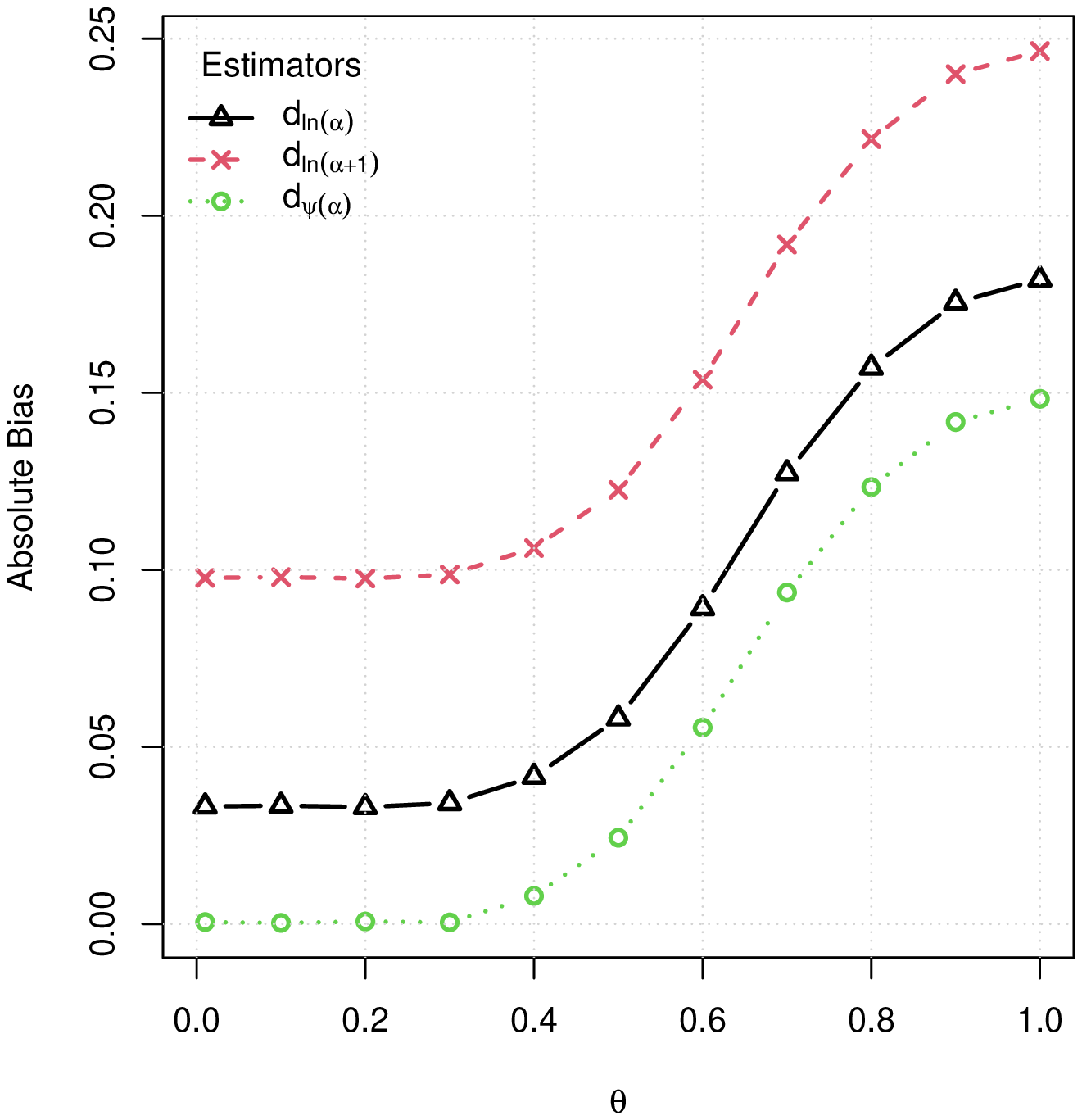}
			\caption{$n=15$, $\alpha=1$  }
		\end{subfigure}%
		\begin{subfigure}[b]{0.5\textwidth}
			\centering
			\includegraphics[height=2.3in,width=6.5cm]{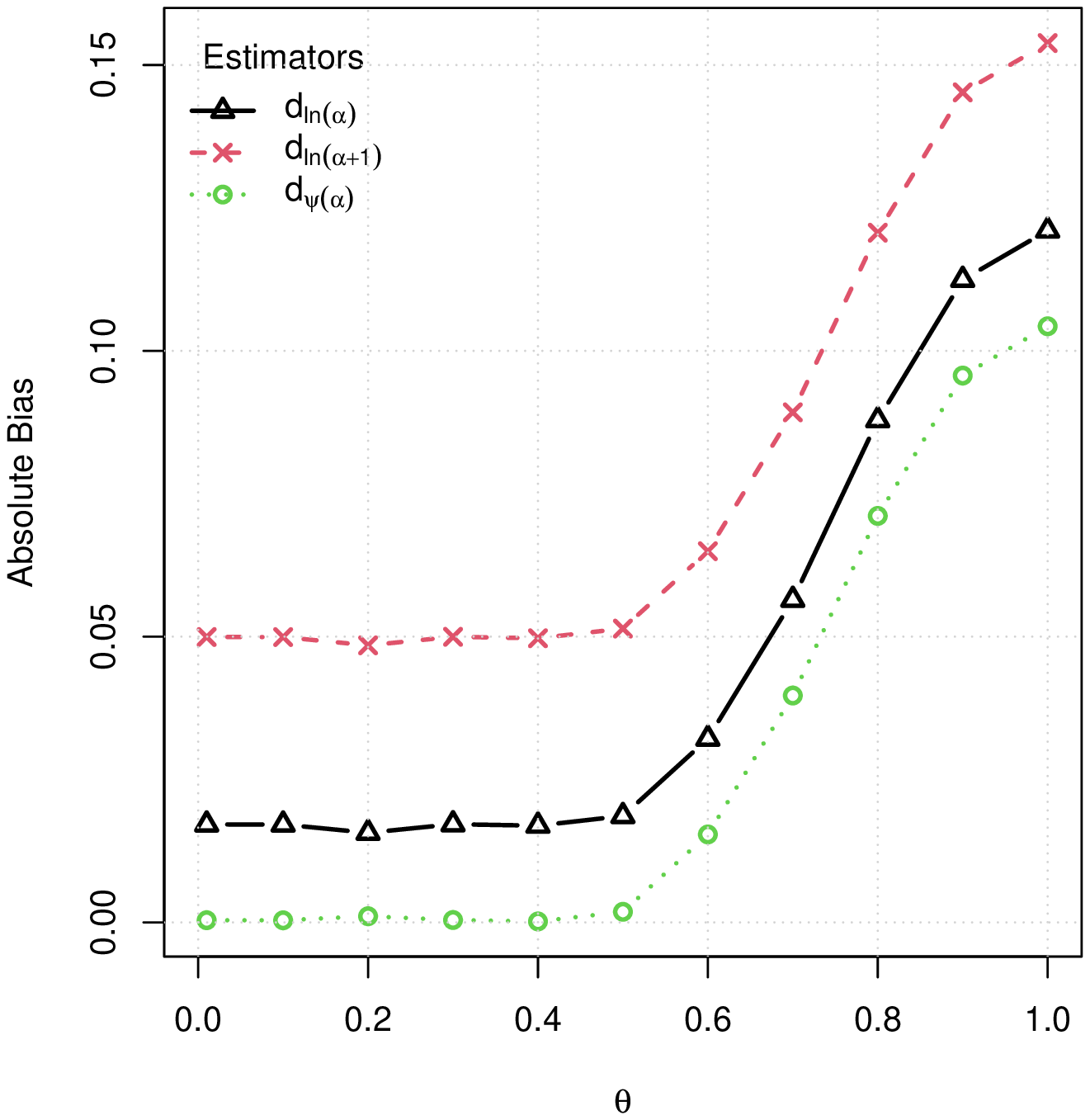}
			\caption{$n=20$, $\alpha=1$  }
		\end{subfigure}%

		\begin{subfigure}[b]{0.5\textwidth}
			\centering
			\includegraphics[height=2.3in]{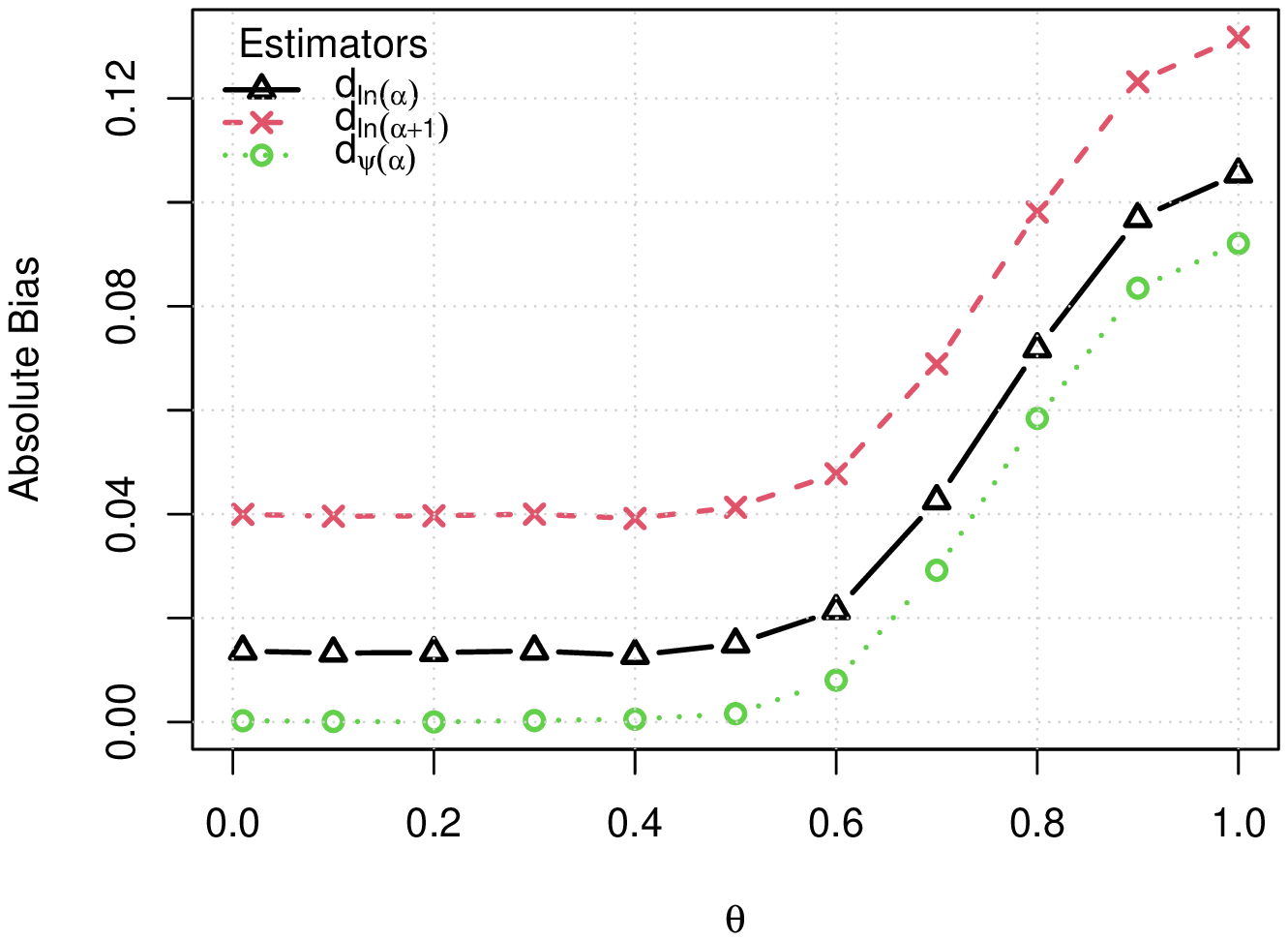}
			\caption{$n=25$, $\alpha=1.5$  }
		\end{subfigure}%
		\begin{subfigure}[b]{0.5\textwidth}
			\centering
			\includegraphics[height=2.3in,width=6.5cm]{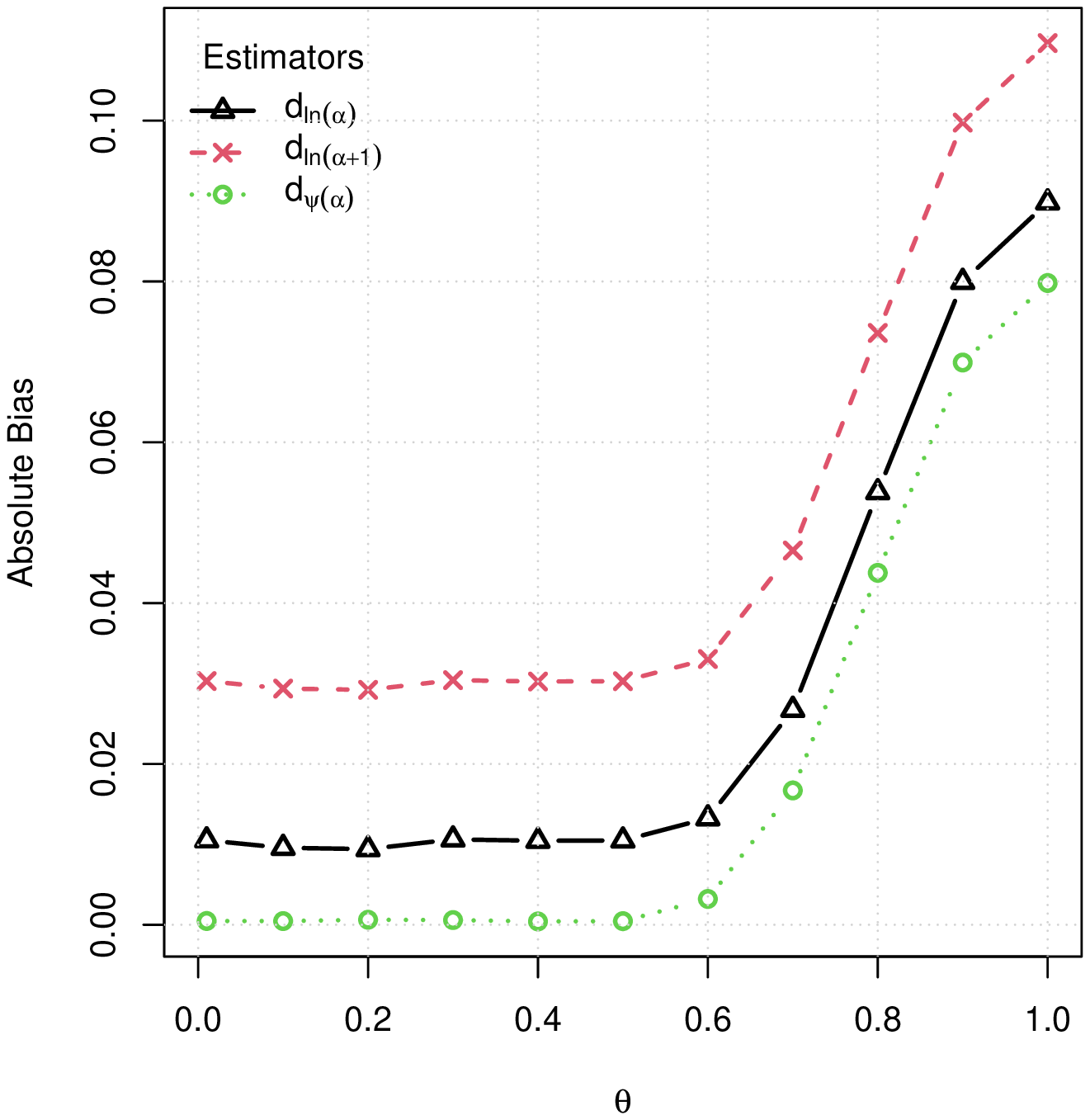}
			\caption{$n=30$, $\alpha=2$  }
		\end{subfigure}%
		
	\end{center}
	\caption{\textbf{Absolute Bias plots of the three naive estimators $d_{\ln \alpha}$, $d_{\ln (\alpha+1)}$ and $d_{\psi(\alpha)}$ of $H_M(\underline{\theta})$.}
	}%
	\label{fig:subfigures}
\end{figure}
\FloatBarrier

\FloatBarrier
\begin{figure}[ht!]
	\begin{center}
		\begin{subfigure}[b]{0.5\textwidth}
			\centering
			\includegraphics[height=2.3in,width=6.5cm]{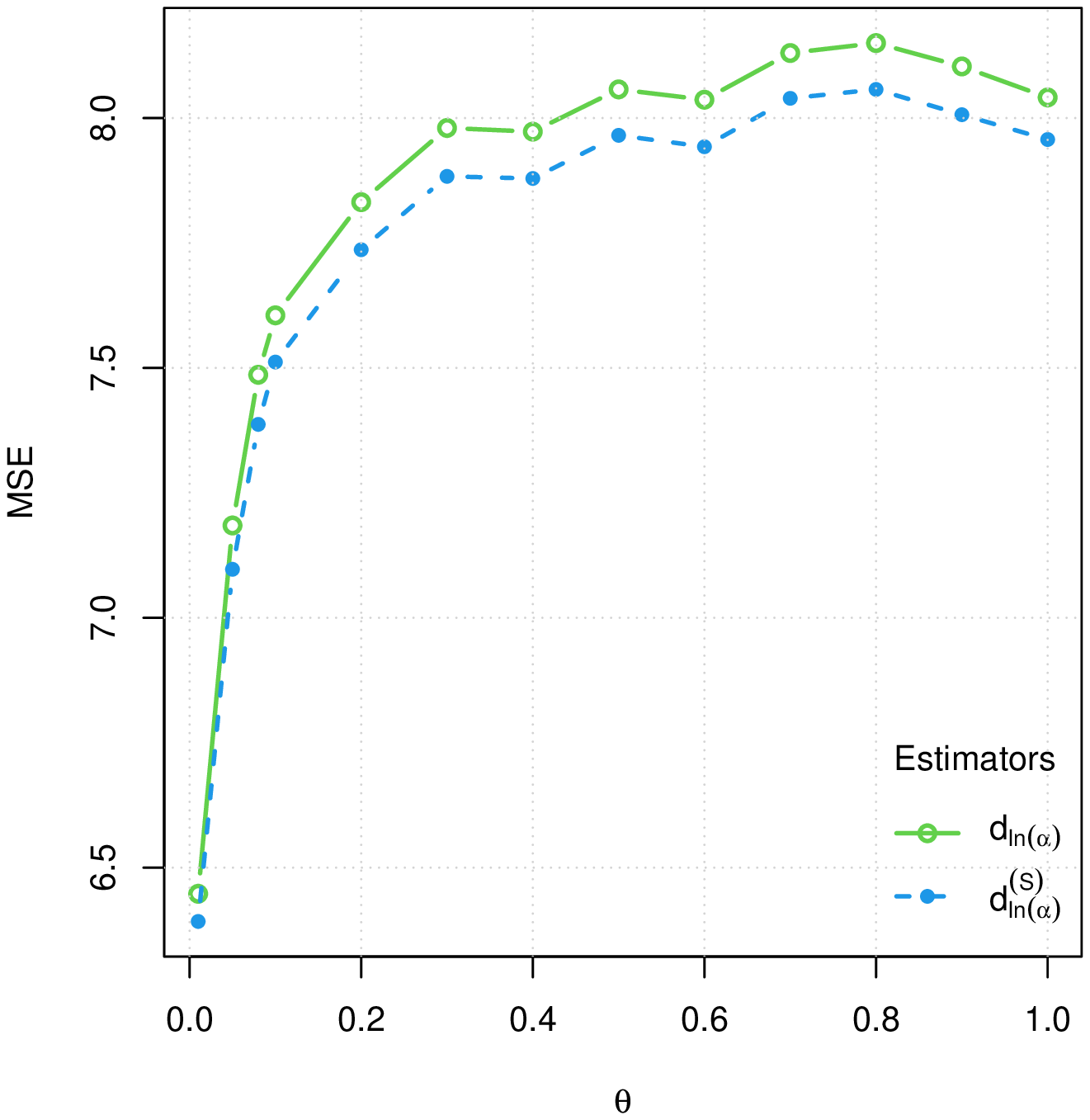}
			\caption{$n=3$, $\alpha=0.2$ }
		\end{subfigure}%
		\begin{subfigure}[b]{0.5\textwidth}
			\centering
			\includegraphics[height=2.3in,width=6.5cm]{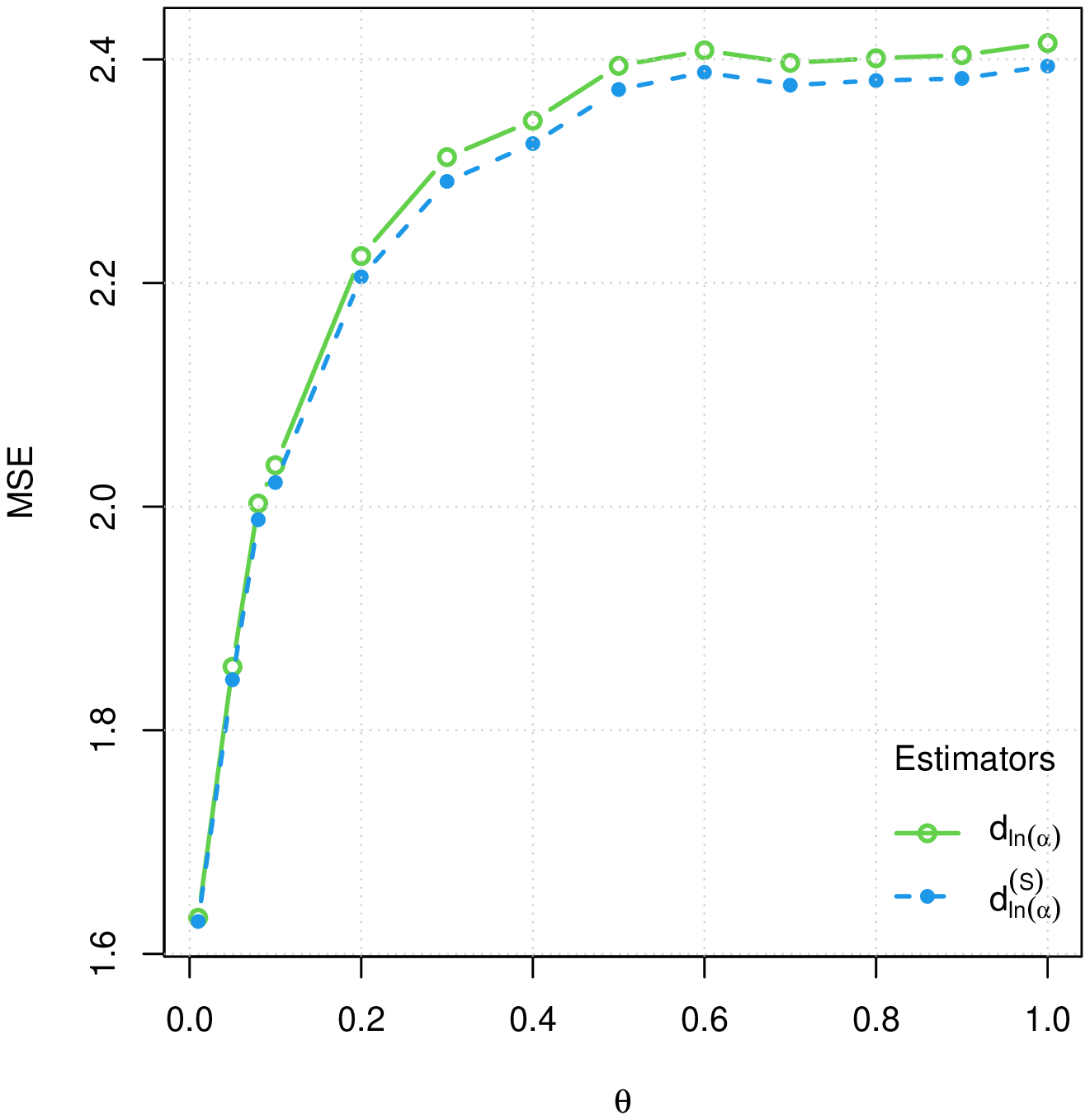}
			\caption{$n=6$, $\alpha=0.5$  }
		\end{subfigure}%
		
		\begin{subfigure}[b]{0.5\textwidth}
			\centering
			\includegraphics[height=2.3in,width=6.5cm]{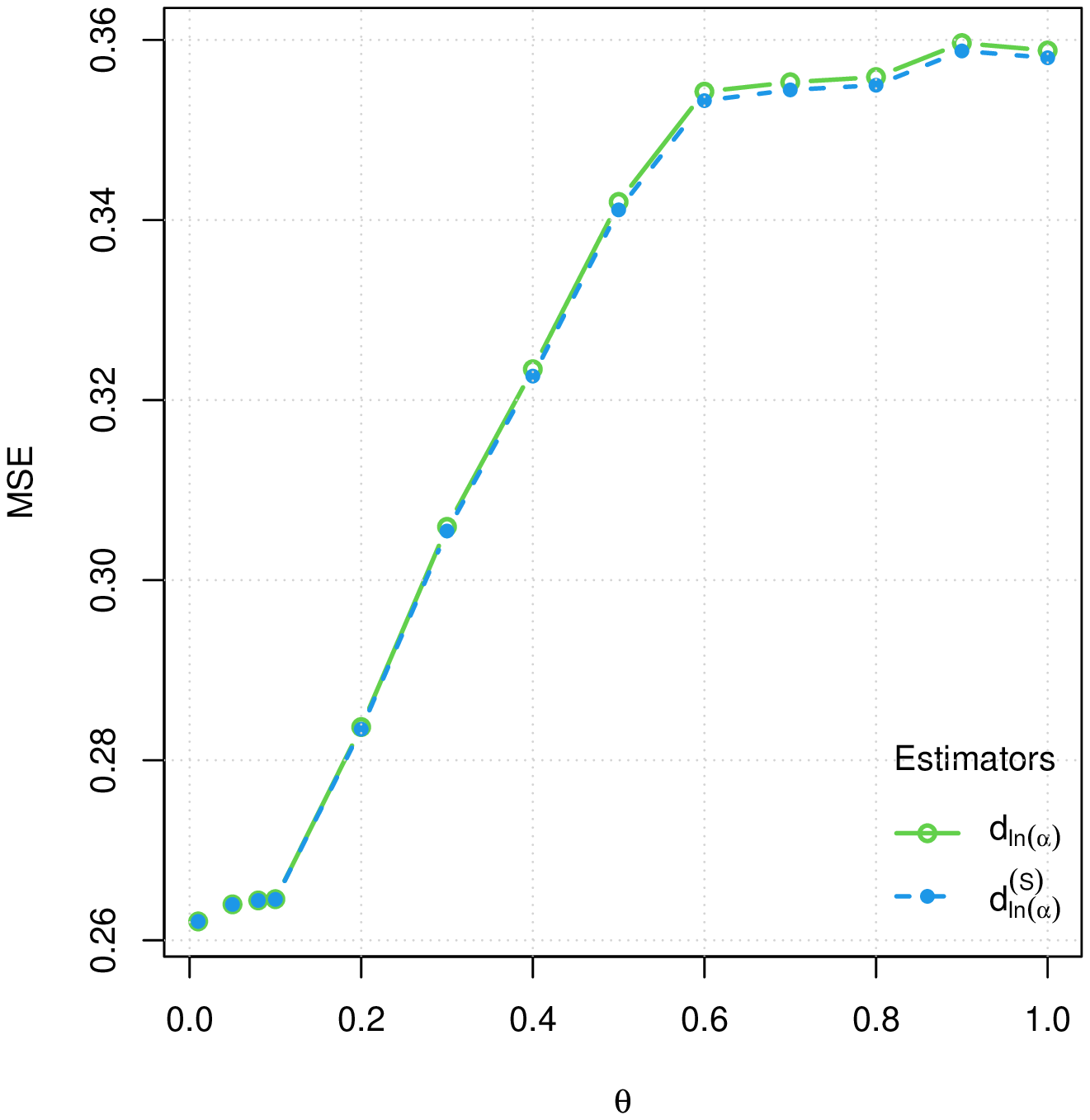}
			\caption{$n=12$, $\alpha=0.5$  }
		\end{subfigure}%
		\begin{subfigure}[b]{0.5\textwidth}
			\centering
			\includegraphics[height=2.3in,width=6.5cm]{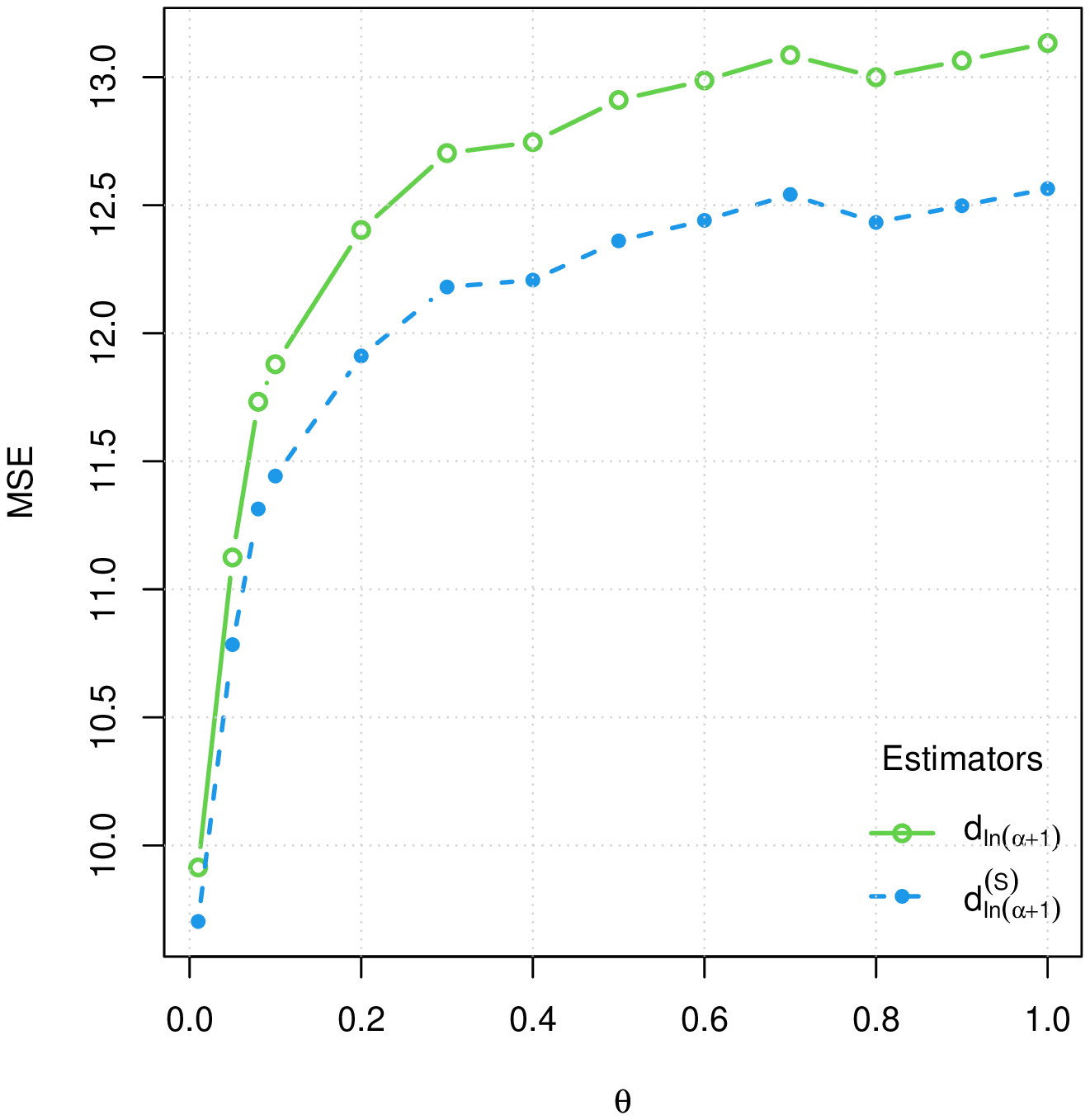}
			\caption{$n=3$, $\alpha=0.2$  }
		\end{subfigure}%

		\begin{subfigure}[b]{0.5\textwidth}
			\centering
			\includegraphics[height=2.3in,width=6.5cm]{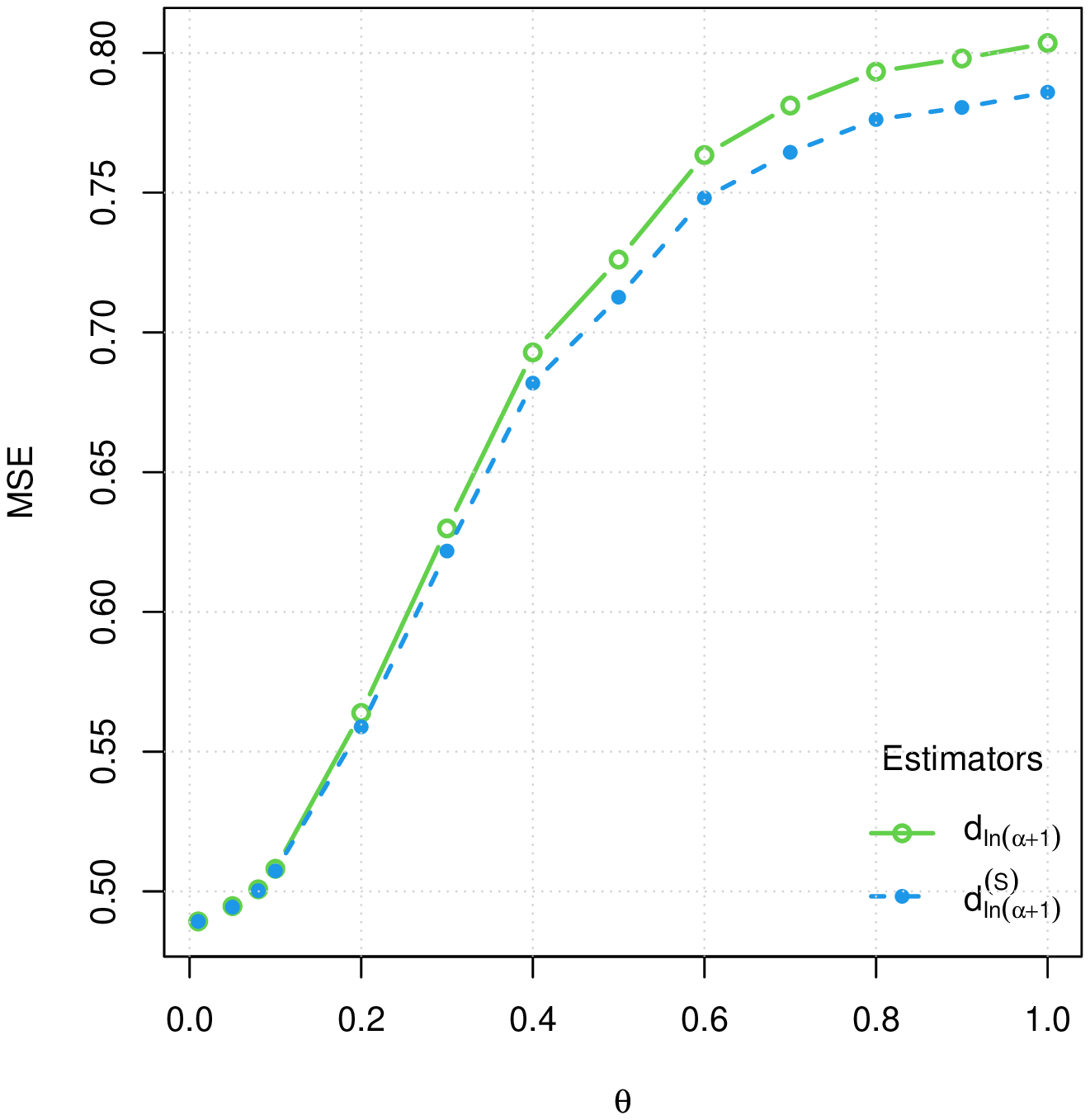}
			\caption{$n=6$, $\alpha=0.5$  }
		\end{subfigure}%
		\begin{subfigure}[b]{0.5\textwidth}
			\centering
			\includegraphics[height=2.3in,width=6.5cm]{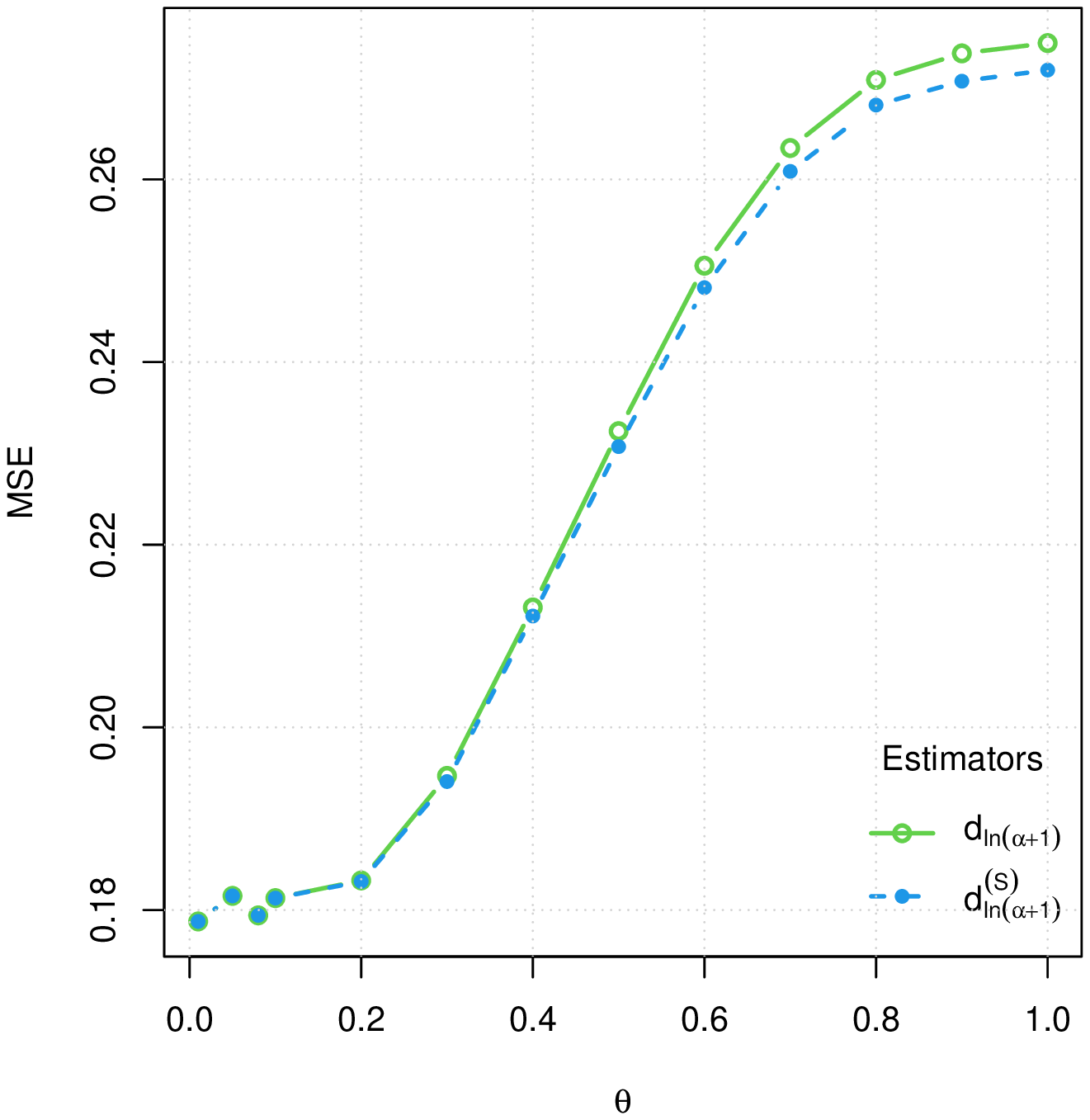}
			\caption{$n=15$, $\alpha=0.5$  }
		\end{subfigure}%
		
	\end{center}
	\caption{\textbf{Mean squared error plots of the two naive estimators and their improved versions ( $d_{\ln (\alpha)}, d_{\ln (\alpha+1)}, d^{(S)}_{\ln (\alpha)}$ and $d^{(S)}_{\ln (\alpha+1)}$) of $H_M(\underline{\theta})$.}
	}%
	\label{fig:subfigures}
\end{figure}
\FloatBarrier

\newpage
\section{Real data example}
Here we consider a real data set provided in \cite{proschan1963theoretical}. The data set reflects the failure times (in hours) of the air conditioning systems of Boeing 720 jet planes ``7913" and ``7914". The data set contains sample observations of equal size $n=24$ and is given as below: \par
\noindent \textbf{Plane 7913}: 97,51,11,4,141,18,142,68,77,80,1,16,106,206,82,54,31,216,46,111,39,63,18,191.\\
\textbf{Plane 7914}: 50,44,102,72,22,39,3,15,197,188,79,88,46,5,5,36,22,139,210,97,30,23,13,14.\\
\par
For testing the goodness of fit of gamma distribution, we analysed the given data in $R$ software using the Kolmogorov-Smirnov goodness of fit test. We observe that at $5\%$ level of significance, one can not reject the hypothesis that the data for Planes 7913 and 7914 are from $Gamma(1, 68)$ and $Gamma(1, 61)$ distributions, respectively. Based on the given data set, we have calculated, $X_1=1869$, $X_2=1539$. Therefore, we select the plane 7913 as the ``worst" and 7914 as the ``best" plane. We obtain the values of $Z_1$, $Z_2$, $T$ and $V$ as $Z_1=1539$, $Z_2=1869$, $T=0.7420444$ and $V=1.347628$. For estimating $H_S(\underline{\theta})$ (the Shannon entropy of the selected plane ``7913") under the squared error loss function, various estimates are provided in the following table:
\begin{center} 
	\textbf{Table 1:} Various estimates of $H_S(\underline{\theta})$ \\
	
	\begin{tabular}{|c|c|c|c|c|}
		
		\hline
		$\mathbf{\delta_{\ln \alpha}}$ & $\mathbf{\delta_{\ln (\alpha+1)}}$ &  $\mathbf{\delta_{\psi(\alpha)}}$ & $\mathbf{\delta^{(S)}_{\ln \alpha}}$ &  $\mathbf{\delta^{(S)}_{\ln (\alpha+1)}}$ \\ 
		\hline
		4.355105 & 4.314283 &4.376083& 4.355&4.314\\ \hline	
	\end{tabular}	
\end{center}
For estimating $H_M(\underline{\theta})$ (the Shannon's entropy of the selected plane ``7914") under the squared error loss function, various estimates are provided in the following table:
\begin{center} 
	\textbf{Table 2:} Various estimates of $H_M(\underline{\theta})$ \\
	
	\begin{tabular}{|c|c|c|c|c|}
		
		\hline
		$\mathbf{d_{\ln \alpha}}$ & $\mathbf{d_{\ln (\alpha+1)}}$ &  $\mathbf{d_{\psi(\alpha)}}$ & $\mathbf{d^{(S)}_{\ln \alpha}}$ &  $\mathbf{d^{(S)}_{\ln (\alpha+1)}}$ \\
		\hline
		 4.160834 &  4.120012 & 4.181812 & 4.27313 & 4.27313\\ \hline	
	\end{tabular}	
\end{center}
It is recommended to use $\delta_{\psi}$ ($d_{\psi}$) for estimating the selected Shannon entropy $H_S(\underline{\theta})$($H_M(\underline{\theta})$) of the failure times of the air conditioning systems of Boeing jet planes.
\section{Final Remarks}
The Shannon entropy is a useful measure of uncertainty that has applications in a variety of fields including wireless communication, weather forecasting, economic modelling, molecular biology, and so on. Estimation of the selected Shannon entropy is an important practical problem in these areas.
In the present article, we have focussed on efficient estimation of the Shannon entropy of the selected gamma population and proposed several estimators. We have derived various admissibility results for a class of naive estimators of the selected entropy and also obtained shrinkage estimators dominating upon various naive estimators. Although, our simulation study suggests that the generalized Bayes estimator with respect to the Jeffreys non informative prior is admissible within the class of scale and permutation equivariant estimators of $H_S(\underline{\theta})$ (or $H_M(\underline{\theta})$) under the mean squared error criterion, we have not been able to prove it. From our analysis, we believe that there does not exist any unbiased estimator of the selected Shannon entropy $H_S(\underline{\theta})$ (or $H_M(\underline{\theta})$), but, unfortunately, we have not been able to prove this result too. It will also be interesting to investigate whether the results obtained in this paper can be adapted to $k ~(\geq 2)$ populations. These are some interesting questions for future research.
\bibliographystyle{apalike}
\bibliography{biblography}

\end{document}